\documentclass{amsart}

\usepackage[english]{babel}

\usepackage{fullpage}

\usepackage{amstext,amsmath,amsthm,amssymb,mathrsfs}
\usepackage{amscd}
\usepackage{verbatim}
\usepackage{float}

\usepackage{bbm}

\usepackage{mathtools}
\usepackage{esint}

\usepackage{faktor}

\usepackage{enumerate}

\usepackage{appendix}

\usepackage{bm}

\DeclareMathOperator*{\foo}{\scalerel*{+}{\sum}}

\usepackage{scalerel}

\usepackage[colorinlistoftodos,prependcaption,textsize=tiny]{todonotes}

\allowdisplaybreaks

\DeclareMathAlphabet{\mathpzc}{OT1}{pzc}{m}{it}

\newcommand{\C}{\mathbb{C}}
\newcommand{\R}{\mathbb{R}}
\newcommand{\N}{\mathbb{N}}
\newcommand{\F}{\mathbb{F}}


\newcommand{\mbs}{\boldsymbol}

\newcommand{\mc}{\mathcal}
\newcommand{\ms}{\mathscr}
\newcommand{\mz}{\mathpzc}


\newcommand{\g}{\gamma}




\newcommand{\Longlra}{\ensuremath{\Longleftrightarrow}}

\newcommand{\longra}{\ensuremath{\longrightarrow}}


\renewcommand{\vec}[1]{\bm{#1}}

\newcommand{\one}{\mathbbm{1}}

\newcommand{\supp}{\text{\rm supp\,}}

\DeclarePairedDelimiter\abs{\lvert}{\rvert}
\newcommand{\hab}[1]{\bigl(#1\bigr)}
\newcommand{\has}[1]{\Bigl(#1\Bigr)}
\newcommand{\norm}[1]{||#1||}

\newcommand{\normB}[1]{\Big|\Big|#1\Big|\Big|}
\newcommand{\normb}[1]{\big|\big|#1\big|\big|}
\newcommand{\normm}[1]{\left|\left|\left|#1\right|\right|\right|}
\newcommand{\ip}[2]{\langle #1, #2 \rangle}

\newcommand{\ud}{\,\mathrm{d}}

\newcommand{\loc}{{\rm loc}}

\theoremstyle{plain}
\newtheorem{thm}{Theorem}[section]
\newtheorem{theorem}[thm]{Theorem}
\newtheorem{lemma}[thm]{Lemma}

\newtheorem{proposition}[thm]{Proposition}
\newtheorem{cor}[thm]{Corollary}
\newtheorem{corollary}[thm]{Corollary}

\theoremstyle{definition}

\theoremstyle{remark}
\newtheorem{remark}[thm]{Remark}

\usepackage{tikz}                           
\usetikzlibrary{shapes.geometric, arrows}

\begin{document}

\title[]{Pointwise Multiplication by the Characteristic Function of the Half-space on Anisotropic Vector-valued Function Spaces}

\author{Nick Lindemulder}
\address{Delft Institute of Applied Mathematics\\
Delft University of Technology \\ P.O. Box 5031\\ 2600 GA Delft\\The
Netherlands}
\address{Institute of Analysis \\
Karlsruhe Institute of Technology \\
Englerstra{\ss}e 2 \\
76131 Karlsruhe\\
Germany}
\email{nick.lindemulder@kit.edu}

\subjclass[2010]{Primary: 46E35, 46E40; Secondary: 46E30}

\keywords{anisotropic, Banach space-valued, Bessel potential, intersection representation, mixed-norm, pointwise multiplier, Triebel-Lizorkin}
\date{\today}

\begin{abstract}
We study the pointwise multiplier property of the characteristic function of the half-space on weighted mixed-norm anisotropic vector-valued function spaces of Bessel potential and Triebel-Lizorkin type.
\end{abstract}

\maketitle

\section{Introduction}\label{IR:sec:intro}
The motivation for this paper comes from the close connection between spaces of vanishing traces and the pointwise multiplier property of the
characteristic function of the underlying domain under consideration, and more particularly, the importance of this connection for interpolation for spaces with vanishing boundary conditions.
The first interpolation theorem for such spaces is due to Grisvard \cite{Grisvard1967} for $L_2$-based Sobolev-Slobodeckii spaces and the real interpolation functor,
which he subsequently extended to the $L_p$-based setting in \cite{Grisvard1969}. The corresponding result for the Sobolev/Bessel potential scale and the complex interpolation functor was subsequently obtained by Seeley \cite{Seeley1972}, but also see the more recent works \cite{Amann2019_LQPP_II} and \cite{Lindemulder&Meyries&Veraar2017} and the references given therein.

The real interpolation results from \cite{Grisvard1967,Grisvard1969} can be considered more elementary than the complex interpolation result from \cite{Seeley1972}. Whereas \cite{Grisvard1967,Grisvard1969} has the pointwise multiplier property of $\one_{\R^d_+}$ on the Sobolev-Slobodeckii space $W^{s}_{p}(\R^d)$ in the parameter range
\begin{equation}\label{eq:par-range_pm}
\frac{1}{p}-1 < s < \frac{1}{p}
\end{equation}
as a byproduct (also see \cite[Chapter~VIII]{Amann2019_LQPP_II}), \cite{Seeley1972} has as one of its main ingredients the result due to Shamir \cite{Shamir1962} and Strichartz \cite{Strichartz1967} that $\one_{\R^d_+}$ is a pointwise multiplier on the Bessel potential space $H^{s}_{p}(\R^d)$ in the parameter range \eqref{eq:par-range_pm}. 
For a further discussion on this we refer the reader to \cite[Chapter~VIII]{Amann2019_LQPP_II}, in particular, 
\cite[p.~341,~342,~368]{Amann2019_LQPP_II}.
Furthermore, extensions to Besov spaces $B^{s}_{p,q}(\R^d)$ and Triebel-Lizorkin spaces $F^{s}_{p,q}(\R^d)$ can be found in \cite{Franke1986,Marschall1987,Meyries&Veraar2015_pointwise_multiplication,Peetre1976,Sickel1987,Sickel1999,Sickel1999b,Triebel1983_TFS_I}, also see the monograph \cite{Runst&Sickel_monograph}.

Recently, Meyries and Veraar \cite{Meyries&Veraar2015_pointwise_multiplication} extended the pointwise multiplier result from \cite{Shamir1962,Strichartz1967}
to the vector-valued setting with Muckenhoupt weights. The weights considered are the power weights $w_\gamma$ given by 
\begin{equation}\label{eq:power_weight}
w_\gamma (t,x') = |t|^\gamma, \qquad t \in \R, x' \in \R^{d-1}, 
\end{equation}
for $\gamma \in (-1,p-1)$. It was shown that $\one_{\R^d_+}$ is a pointwise multiplier on the weighted vector-valued Bessel potential space $H^{s}_{p}(\R^d,w_\gamma,X)$ if $X$ is a UMD Banach space (see Section~\ref{subsec:UMD}) and $s$ satisfies
\begin{equation}\label{eq:weight_par-range}
\frac{1+\gamma}{p}-1 < s < \frac{1+\gamma}{p}, 
\end{equation}
where $\R^d_+=\{(t,x') \in \R \times \R^{d-1} : t > 0\}$.
Furthermore, analogous results were obtained for Besov and Triebel-Lizorkin spaces, without any restrictions on the Banach space $X$.

The motivation to consider the weighted vector-valued setting in \cite{Meyries&Veraar2015_pointwise_multiplication} is the
$L_p$-$L_q$-maximal regularity approach to parabolic evolution equations. In this setting the use of temporal and/or spatial weights of the form \eqref{eq:power_weight} allow one to treat rougher initial and/or boundary data, respectively, and they provide an inherent smoothing effect of the solutions, see for instance the papers \cite{Angenent1990,Clement&Simonett2001,Hummel&Lindemulder2019,Lindemulder2020_JEE,Lindemulder&Veraar2020_JDE,Meyries&Veraar2014_traces,Pruss&Simonett2004,PrSiZa} and the monograph \cite{Pruess&Simonett2016_book}.
Furthermore, temporal weights are of  fundamental importance in the recently developed theory of critical spaces and its applications to nonlinear partial differential equations of quasi- and semilinear type, see   \cite{Agresti&Veraar2020a,Agresti&Veraar2020b,Pruess2017,Pruess&Wilke2017,Pruess&Wilke2018,Pruess&Simonett&Wilke2018}. 

The pointwise multiplier property of $\one_{\R^d_+}$ on $H^{s}_{p}(\R^d,w_\gamma,X)$ from \cite{Meyries&Veraar2015_pointwise_multiplication} was first used by the author, Meyries and Veraar \cite{Lindemulder&Meyries&Veraar2017} to prove results on complex interpolation of weighted vector‐valued Sobolev/Bessel potential spaces on the half‐line with Dirichlet boundary conditions, with as an application the characterization of the fractional domain spaces of the first derivative operator on the half-line. It was subsequently used by Amann \cite{Amann2019_LQPP_II} to characterize complex interpolation spaces of Sobolev/Bessel potential spaces on half-spaces with normal boundary conditions. 
These interpolation results are (partial) extensions of Seeley's classical work in this direction \cite{Seeley1972}. 

In \cite{Amann2019_LQPP_II} the above mentioned interpolation result is \cite[Theorem~VIII.2.4.8]{Amann2019_LQPP_II}. Whereas \cite{Amann2019_LQPP_II} is mostly concerned with so-called \emph{anisotropic} function spaces, \cite[Theorem~VIII.2.4.8]{Amann2019_LQPP_II} restricts itself to the classical \emph{isotropic} setting. As explainded in \cite[Remark~VIII.2.4.9]{Amann2019_LQPP_II}, the reason for this restriction is the unavailability of an anisotropic version of the pointwise multiplier result from \cite{Meyries&Veraar2015_pointwise_multiplication}. A conjectured anisotropic analogue is left as an open problem on \cite[p.~342]{Amann2019_LQPP_II}. In this paper we will solve this open problem (see Theorem~\ref{thm:pm_H}) and, furthermore, obtain an analogous result for anisotropic Triebel-Lizorkin spaces (see Theorem~\ref{thm:pm_F}).
As an application one could obtain results on interpolation with boundary conditions analogously to \cite[Theorem~VIII.2.4.8]{Amann2019_LQPP_II}. However, we will leave this to the future.

Anisotropic function spaces naturally appear in the study of parabolic partial differential equations, where they provide a description of the corresponding parabolic regularity in which the time and space derivatives contribute in different strength, see e.g.\ \cite{Denk&Kaip2013,Hummel&Lindemulder2019,Koehne&Saal2020,Lindemulder2020_JEE,Lindemulder&Veraar2020_JDE}. Loosely speaking, there are two approaches to anisotropy in the context of function spaces. On the one hand, there is a Fourier analytic approach, where the anisotropic nature of the function space is obtained by using a suitable anisotropic scaling structure in the Fourier domain, see e.g.\ \cite{Amann2019_LQPP_II,Farkas&Johnsen&Sickel2000,Johnsen&Munch_Hansen&Sickel2014,Johnsen&Mucn_Hansen&Sickel2015,Johnsen&Sickel2007,Johnsen&Sickel2008,Lindemulder2019_IR,Lindemulder2020_JEE}. 
On the other hand, there is a functional analytic approach, where the anisotropic nature of the function space is obtained by taking intersections of function space-valued function spaces, see e.g.\ \cite{Denk&Hieber&Pruess2007,Koehne&Saal2020,Meyries&Schnaubelt2012_fractional_Sobolev,Meyries&Schnaubelt2012_maximal_regularity,Pruess&Simonett2016_book}. A bridge between the two approaches is provided by intersection representations, see e.g.\ \cite{Amann2019_LQPP_II,Lindemulder2019_IR}, which will play a major role in this paper.

In the maximal $L_p$-$L_q$-regularity problem for fully inhomogeneous parabolic boundary value problems, Triebel-Lizorkin spaces have turned out to naturally occur in the description of the sharp regularity of the boundary data (see \cite{Denk&Hieber&Pruess2007,Johnsen&Mucn_Hansen&Sickel2015,Johnsen&Sickel2008,Lindemulder2020_JEE,Weidemaier2002}).
For instance, in the special case of the heat equation with Dirichlet boundary condition, the boundary datum has to be in anisotropic mixed-norm Triebel-Lizorkin space
\begin{equation*}
F^{\delta,(\frac{1}{2},1)}_{(p,q),q}(J \times \partial\mathscr{O}) = F^{\delta}_{p,q}(J;L_q(\partial\mathscr{O})) \cap L_p(J;B^{2\delta}_{q,q}(\partial\mathscr{O})), \qquad \delta = 1-\frac{1}{2p},    
\end{equation*}
where the identification between these two spaces follows from the intersection representation \cite[Example~5.7]{Lindemulder2019_IR}.
Triebel-Lizorkin spaces can furthermore be important as a technical tool in the study of Sobolev and Bessel potential spaces, see e.g.\ \cite{Lindemulder2020_JEE,Lindemulder&Veraar2020_JDE,Meyries&Veraar2012_sharp_embedding,Meyries&Veraar2014_traces,Meyries&Veraar2015_pointwise_multiplication,Scharf&Schmeisser&Sickel_Traces_vector-valued_Sobolev,Schmeisser&Sickel2005,S&S_jena-notes} for the Banach space-valued setting. 
The latter even holds in the weighted setting with power weights $w_\gamma$ \eqref{eq:power_weight} with $\gamma$ outside the $A_p$-range $(-1,p-1)$, see \cite{Lindemulder&Veraar2020_JDE} and \cite{Lindemulder2018_DSOP,Lindemulder2020_DSOE,Hummel&Lindemulder2019}.

Motivated by \cite{Lindemulder2018_DSOP,Lindemulder2020_DSOE}, we will extend the corresponding results in \cite{Meyries&Veraar2015_pointwise_multiplication} for weighted Triebel-Lizorkin spaces beyond the $A_p$-setting considered there. Furthermore, motivated by the maximal $L_p$-$L_q$-regularity problem, we will simultaneously extend these results to the anisotropic mixed-norm setting.

\subsection*{Overview.} This paper is organized as follows.
In Section~\ref{sec:prelim} we discuss the required preliminaries for the rest of the paper. In particular, we introduce weighted mixed-norm anisotropic vector-valued function spaces of Bessel potential and Triebel-Lizorkin type. In Section~\ref{sec:pm;H} we extend the pointwise multiplier result \cite{Meyries&Veraar2015_pointwise_multiplication} to the setting of mixed-norm anisotropic Bessel potential spaces, and thereby provide a solution to the open problem posed on \cite[p.~342]{Amann2019_LQPP_II}.
In Section~\ref{sec:pm;F} we prove an analogous result for mixed-norm anisotropic Triebel-Lizorkin spaces.

\subsection*{Notation and convention.}
We write: $\N_0 = \{0,1,2,3,\ldots\}$, $\N = \{1,2,3,4,\ldots\}$, $\frac{1}{\N}=\{\frac{1}{n}: n \in \N\}$, $\R_{+}=(0,\infty)$,
$\hat{f}=\mathcal{F}f$ for the Fourier transform and $\check{f}=\mathcal{F}^{-1}f$ for the inverse Fourier transform.
Given a quasi-Banach space $Y$, we denote by $\mathcal{B}(Y)$ the space of bounded linear operators on $Y$.
Throughout the paper, we work over the field of complex scalars.

We use (modified) Vinogradov notation for estimates: $a \lesssim b$ means that there exists a constant $C \in (0,\infty)$ such that $a \leq C b$; $a \lesssim_{p,P} b$ means that there exists a constant $C \in (0,\infty)$, only depending on $p$ and $P$, such that $a \leq C b$; $a \eqsim b$ means $a \lesssim b$ and $b \lesssim a$; $a \eqsim_{p,P} b$ means $a \lesssim_{p,P} b$ and $b \lesssim_{p,P} a$.

\section{Preliminaries}\label{sec:prelim}

\subsection{Decompositions and Anisotropy}

Let $\ell \in \N$ and $\mathpzc{d} \in \N^\ell$. Consider the $\mathpzc{d}$-decomposition of $\R^d$:
\begin{equation}\label{eq:d_decomp}
\R^d= \R^{\mathpzc{d}_1} \times \ldots \times \R^{\mathpzc{d}_\ell}.
\end{equation}
We write $\R^\mathpzc{d}$ when we want to indicate that we view $\R^d$ as being $\mathpzc{d}$-decomposed as in \eqref{eq:d_decomp}. For $x \in \R^\mathpzc{d}$ we accordingly write $x=(x_1,\ldots,x_\ell)$ with $x_j \in \R^{\mathpzc{d}_j}$ for each $j \in \{1,\ldots,\ell\}$.

For $\mbs{a} \in (0,\infty)^\ell$ we define the anisotropic quasi-norm $\abs{\,\cdot\,}_{(\mbs{a},\mz{d})}\colon \R^{\mz{d}} \to \R_+$ by
\begin{equation*}
  \abs{x}_{(\mbs{a},\mz{d})} := \has{\sum_{j=1}^\ell\abs{x_j}^{2/a_j}}^{1/2}, \qquad x \in \R^{\mz{d}},
\end{equation*}
which is a quasi-metric on $\R^{\mz{d}}$.  
We also define the anisotropic dilation $\delta_\lambda^{(\mbs{a},\mz{d})}$ on $\R^{\mz{d}}$ for $\lambda>0$ by
\begin{equation*}
  \delta_\lambda^{(\mbs{a},\mz{d})}x:= (\lambda^{a_1}x_1,\cdots,\lambda^{a_\ell}x_\ell), \qquad x \in \R^{\mz{d}}.
\end{equation*}

\subsection{Weights}\label{PIBVP:subsec:sec:prelim;mixed-norm}

A reference for the general theory of Muckenhoupt weights is \cite[Chapter~9]{Grafakos_modern}.

A \emph{weight} on $\R^d$ is a measurable function $w:\R^d \longra [0,\infty]$ that takes it values almost everywhere in $(0,\infty)$.
We denote by $\mc{W}(\R^d)$ the set of all weights $w$ on $\R^d$.

For $w \in \mc{W}(\R^d)$ and $p \in [1,\infty)$ we denote by $L_{p}(\R^d,w)$ the space of all $f\in L_0(\R^d)$ with
\[
\norm{f}_{L^{p}(\R^d,w)} := \left( \int_{\R^d}|f(x)|^{p}w(x)\,\mathrm{d}x \right)^{1/p} < \infty,
\]
where $L_0(\R^d)$ the space of equivalence classes of complex-valued measurable functions on $\R^d$.

Suppose that $\R^d$ is $\mathpzc{d}$-decomposed as in \eqref{eq:d_decomp}.
For $\vec{p} \in [1,\infty)^{\ell}$ and $\vec{w} \in \prod_{j=1}^{l}\mathcal{W}(\R{\mz{d}_j})$ we denote by $L_{\vec{p}}(\R^{\mz{d}},\vec{w})$ the mixed-norm space
\[
L_{\vec{p}}(\R^{\mz{d}},\vec{w}) := L_{p_{l}}(\R^{\mz{d}_{\ell}},w_{\ell})[\ldots[L_{p_{1}}(\R^{\mz{d}_1},w_{1})]\ldots],
\]
that is, $L_{\vec{p}}(\R^{\mz{d}},\vec{w})$ is the space of all $f \in L_{0}(\R^d)$ with
\[
\norm{f}_{L_{\vec{p}}(\R^{\mz{d}},\vec{w})} :=
 \left( \int_{\R^{\mz{d}_{\ell}}} \ldots \left(\int_{\R^{\mz{d}_{1}}}|f(x)|^{p_{1}}w_{1}(x_{1})\mathrm{d}x_{1} \right)^{p_{2}/p_{1}} \ldots w_{\ell}(x_{\ell})\mathrm{d}x_{\ell} \right)^{1/p_{\ell}} < \infty.
\]
We equip $L_{\vec{p}}(\R^{\mz{d}},\vec{w})$ with the norm $\norm{\,\cdot\,}_{L_{\vec{p}}(\R^{\mz{d}},\vec{w})}$, which turns it into a Banach space.

Given a Banach space $X$, we denote by $L_{\vec{p}}(\R^{\mz{d}},\vec{w};X)$ the associated Bochner space
\[
L_{\vec{p}}(\R^{\mz{d}},\vec{w};X) := L_{\vec{p}}(\R^{\mz{d}},\vec{w})(X) = \{ f \in L_{0}(\R^{d};X) : \norm{f}_{X} \in L_{\vec{p}}(\R^{\mz{d}},\vec{w}) \}.
\]

For $p \in (1,\infty)$ we denote by $A_{p}=A_{p}(\R^{d})$ the class of all Muckenhoupt $A_{p}$-weights, which are all the locally integrable weights on $\R^d$ for which the $A_{p}$-characteristic $$[w]_{A_{p}}:=\sup_{Q\text{ cube in }\R^d} \left(\frac{1}{|Q|}\int_{Q} w(x)\,\mathrm{d}x\right)\left(\frac{1}{|Q|}\int_{Q} w(x)^{-\frac{1}{p-1}}\,\mathrm{d}x\right) \in [1,\infty]$$ is finite,
where in the supremum we consider cubes $Q$ in $\R^d$ with sides parallel to the coordinate axes.
We furthermore set $A_{\infty} := \bigcup_{p \in (1,\infty)}A_{p}$.

Let $p \in (1,\infty)$ and let $w$ be a weight on $\R^d$. Denoting by $w'_p = w^{-\frac{1}{p-1}}$ the $p$-dual weight of $w$, we have
\begin{equation*}\label{eq:p-dual_weight}
w \in A_p \quad \Longlra \quad w'_p \in A_{p'} \quad \Longlra \quad w,w'_p \in A_\infty.    
\end{equation*}

The most important weight for this paper is the power weight
\begin{equation*}
w_\gamma (x_1,x') = |x_1|^\gamma, \qquad x=(x_1,x') \in \R \times \R^{d-1},
\end{equation*}
where $\gamma \in \R$.
For this weight we have
\begin{equation*}
w_\gamma  \in A_p  \quad \Longlra \quad \gamma \in (-1,p-1) 
\end{equation*}
and
\begin{equation*}
w_\gamma  \in A_\infty  \quad \Longlra \quad \gamma \in (-1,\infty). 
\end{equation*}
Furthermore, $(w_\gamma)'_p = w_{\gamma'_p}$ with $\gamma'_p = -\frac{\gamma}{p-1}$.

\subsection{UMD Spaces}\label{subsec:UMD}
We refer the reader to \cite[Chapter~4]{Hytonen&Neerven&Veraar&Weis2016_Analyis_in_Banach_Spaces_I} for an introduction to the theory of UMD spaces.

Let us state some facts:
\begin{itemize}
\item Every Hilbert space is a UMD space;
\item If $X$ is a UMD space, $(S, \Sigma, \mu)$ is $\sigma$-finite and $p\in (1, \infty)$, then $L_p(S;X)$ is a UMD space.
A generalization of this to the setting of reflexive Musielak-Orlicz spaces can be found in \cite{LVY2018}.
\item UMD spaces are reflexive. In particular, $L^1$ and $L^\infty$ are not UMD.
\item Closed subspaces and quotients of UMD spaces are again UMD spaces. In particular, reflexive Besov, Triebel-Lizorkin and Sobolev spaces are UMD.
\end{itemize}

\subsection{Function Spaces}

For the theory of Banach space-valued distributions we refer the reader to
\cite{Amann2019_LQPP_II}. Let us explicitely mention that, given a Banach space $X$, we denote by $\mathcal{S}(\R^d;X)$ the space of $X$-valued Schwartz functions on $\R^d$ and we denote by $\mathcal{S}'(\R^d;X) = \mathcal{L}(\mathcal{S}(\R^d),X)$ the space of $X$-valued tempered distributions on $\R^d$. 
Furthermore, we denote by $\mathscr{O}_{\mathrm{M}}(\R^d;X)$ the space of slowly increasing smooth functions on $\R^d$. This means that $f \in \mathscr{O}_{\mathrm{M}}(\R^d;X)$ if and only if $f \in C^\infty(\R^d;X)$ and, for each $\alpha \in \N^d$, there exist $m_\alpha \in \N$ and $c_\alpha >0 $ such that
$$
\norm{D^\alpha f(x)}_{X} \leq c_\alpha (1+|x|^2)^{m_\alpha}, \qquad x \in \R^d. 
$$

Let $X$ be a Banach space, let $\ell \in \N$ and let $\mathpzc{d} \in \N^\ell$. 
For $\vec{a} \in (\frac{1}{\N})^\ell$ and $s \in \R$ we define the anisotropic Bessel potential operator $J^{(\vec{a},\mathpzc{d})}_{s} \in \mc{L}(\mc{S}'(\R^d;X))$ by
\begin{equation*}
  J^{(\vec{a},\mathpzc{d})}_{s}f := \left[ \hab{(1+\abs{\xi}_{(\mbs{a},\mz{d})}^{2})^{s/2}\cdot \widehat{f}}\right]^{\vee}, \qquad\qquad f \in \mc{S}'(\R^d;X),
\end{equation*}
and for $\vec{p} \in (1,\infty)^\ell$ and $\vec{w} \in \prod_{j=1}^{\ell}A_{p_j}(\R^{\mz{d}_j})$ we define the corresponding Bessel potential space $H^{s,\vec{a}}_{\vec{p}}(\R^{\mathpzc{d}},\vec{w};X)$ by
\begin{equation*}
\begin{aligned}
&H^{s,\vec{a}}_{\vec{p}}(\R^{\mathpzc{d}},\vec{w};X) := \left\{ f \in \mc{S}'(\R^d;X) : J^{(\vec{a},\mathpzc{d})}_{s}f \in L_{\vec{p}}(\R^{\mathpzc{d}},\vec{w};X) \right\}, \\
&\norm{f}_{H^{s,\vec{a}}_{\vec{p}}(\R^{\mathpzc{d}},\vec{w};X)} := \norm{J^{(\vec{a},\mathpzc{d})}_{s}f}_{L_{\vec{p}}(\R^{\mz{p}},\vec{w};X)}.
\end{aligned}
\end{equation*}
Furthermore, for $\sigma \in \R$, $\vec{\omega} \in \N^\ell$, $\vec{p} \in (1,\infty)^\ell$ and $\vec{w} \in \prod_{j=1}^{\ell}A_{p_j}(\R^{\mz{d}_j})$ we define the Bessel potential space $H^{\sigma/\vec{\omega}}_{\vec{p}}(\R^{\mathpzc{d}},\vec{w};X)$ by
$$
H^{\sigma/\vec{\omega}}_{\vec{p}}(\R^{\mathpzc{d}},\vec{w};X) := H^{\sigma/\dot{\omega},\vec{\omega}/\dot{\omega}}_{\vec{p}}(\R^{\mathpzc{d}},\vec{w};X),
$$
where $\dot{\omega} = \mathrm{lcm}(\vec{\omega})$, the least common multiple of $\omega_1,\ldots,\omega_\ell$. 
The Bessel potential space $H^{\sigma/\vec{\omega}}_{\vec{p}}(\R^{\mathpzc{d}},\vec{w};X)$ is a weighted version of the Bessel potential space $H^{\sigma/\vec{\omega}}_{\vec{p}}(\R^{\mathpzc{d}};X)$ from \cite{Amann2019_LQPP_II}, also see \cite{Koehne&Saal2020}.

Let $\vec{a} \in (\frac{1}{\N})^\ell$ and $s \in \R$. Suppose that $\mathrm{gcd}(\vec{a}^{-1})=1$, the greatest common divisor of $\frac{1}{a_1},\ldots,\frac{1}{a_\ell}$, and write $m=(\frac{1}{a_1} \cdot \ldots \cdot \frac{1}{a_\ell})$.
Then, setting $\vec{\omega}=m\vec{a} \in \N^\ell$ and $\sigma=ms$, we have $m=\dot{\omega}=\mathrm{lcm}(\vec{\omega})$, so that $s=\sigma/\dot{\omega}$ and $\vec{a}=\vec{\omega}/\dot{\omega}$, and thus
\begin{equation}\label{eq:prelim:H_coincidence_Amann_def}
H^{s,\vec{a}}_{\vec{p}}(\R^{\mathpzc{d}},\vec{w};X) = H^{\sigma/\vec{\omega}}_{\vec{p}}(\R^{\mathpzc{d}},\vec{w};X), \qquad\qquad \text{isometrically}.    
\end{equation}
The following proposition makes it possible to obtain a version of this for the case  $\mathrm{gcd}(\vec{a}^{-1}) > 1$, see Corollary~\ref{cor:lem:H_scaling_s-a}.

\begin{proposition}\label{prop:H_scaling_s-a}
Let $X$ be a UMD Banach space, $\vec{a} \in (\frac{1}{\N})^\ell$, $\vec{p} \in (1,\infty)^\ell$, $\vec{w} \in \prod_{j=1}^\ell A_{p_j}(\R^{\mathpzc{d}_j})$ and $s \in \R$. Let $k \in \N$ be such that $k\vec{a} \in (\frac{1}{\N})^\ell$.  
Then
\begin{equation}\label{eq:lem:H_scaling_s-a}
H^{s,\vec{a}}_{\vec{p}}(\R^{\mathpzc{d}},\vec{w};X) = H^{k s,k \vec{a}}_{\vec{p}}(\R^{\mathpzc{d}},\vec{w};X).    
\end{equation}
\end{proposition}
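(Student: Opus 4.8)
The plan is to reduce \eqref{eq:lem:H_scaling_s-a} to the $L_{\vec{p}}(\R^{\mz{d}},\vec{w};X)$-boundedness of two Fourier multiplier operators and then to verify a Mikhlin/Marcinkiewicz-type condition for their symbols. Since $J^{(\vec{a},\mz{d})}_{s}$ is an isomorphism of $\mc{S}'(\R^{d};X)$ with inverse $J^{(\vec{a},\mz{d})}_{-s}$, and likewise for $J^{(k\vec{a},\mz{d})}_{ks}$, for $f \in \mc{S}'(\R^{d};X)$ I would put $g := J^{(\vec{a},\mz{d})}_{s}f$, so that $J^{(k\vec{a},\mz{d})}_{ks}f = Tg$ where $T := J^{(k\vec{a},\mz{d})}_{ks}J^{(\vec{a},\mz{d})}_{-s}$ is the Fourier multiplier operator with symbol
\begin{equation*}
  m(\xi) := (1+\abs{\xi}_{(k\mbs{a},\mz{d})}^{2})^{ks/2}\,(1+\abs{\xi}_{(\mbs{a},\mz{d})}^{2})^{-s/2}, \qquad \xi \in \R^{d},
\end{equation*}
and $T^{-1} = J^{(\vec{a},\mz{d})}_{s}J^{(k\vec{a},\mz{d})}_{-ks}$ is the Fourier multiplier operator with symbol $1/m$. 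As $f \in H^{s,\vec{a}}_{\vec{p}}(\R^{\mz{d}},\vec{w};X)$ exactly when $g \in L_{\vec{p}}(\R^{\mz{d}},\vec{w};X)$ and $f \in H^{ks,k\vec{a}}_{\vec{p}}(\R^{\mz{d}},\vec{w};X)$ exactly when $Tg \in L_{\vec{p}}(\R^{\mz{d}},\vec{w};X)$, it suffices to show that $T$ and $T^{-1}$ are bounded on $L_{\vec{p}}(\R^{\mz{d}},\vec{w};X)$; then \eqref{eq:lem:H_scaling_s-a} holds with equivalence of norms.

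Next I would record the elementary properties of $m$. For $\vec{b} \in \{\vec{a},k\vec{a}\}$ the hypotheses give $\vec{b}^{-1} \in \N^{\ell}$, so $\abs{\xi}_{(\mbs{b},\mz{d})}^{2} = \sum_{j=1}^{\ell}(\abs{\xi_{j}}^{2})^{1/b_{j}}$ is a nonnegative polynomial; hence $g_{\vec{b}} := 1+\abs{\xi}_{(\mbs{b},\mz{d})}^{2} \ge 1$ is smooth and bounded below, its real powers are smooth on all of $\R^{d}$, and $m \in C^{\infty}(\R^{d})$. Using $\delta^{(k\mbs{a},\mz{d})}_{\mu} = \delta^{(\mbs{a},\mz{d})}_{\mu^{k}}$, the compactness of $\{\,\xi : \abs{\xi}_{(\mbs{a},\mz{d})}=1\,\}$, and the homogeneities $\abs{\delta^{(\mbs{a},\mz{d})}_{\lambda}\xi}_{(\mbs{a},\mz{d})} = \lambda\abs{\xi}_{(\mbs{a},\mz{d})}$ and $\abs{\delta^{(\mbs{a},\mz{d})}_{\lambda}\xi}_{(k\mbs{a},\mz{d})} = \lambda^{1/k}\abs{\xi}_{(k\mbs{a},\mz{d})}$, one obtains $\abs{\xi}_{(k\mbs{a},\mz{d})} \eqsim \abs{\xi}_{(\mbs{a},\mz{d})}^{1/k}$, hence $g_{k\vec{a}}(\xi) \eqsim g_{\vec{a}}(\xi)^{1/k}$, and therefore $m(\xi) \eqsim 1$ and $1/m(\xi) \eqsim 1$ uniformly in $\xi$.

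The crux is the verification of the multiplier estimates. I would first show that for $\vec{b} \in \{\vec{a},k\vec{a}\}$, every $\alpha \in \N_{0}^{d}$ and every $r \in \R$ one has $\abs{\xi^{\alpha}D^{\alpha}\big(g_{\vec{b}}^{r}\big)(\xi)} \lesssim_{\alpha,r,\vec{b}} g_{\vec{b}}(\xi)^{r}$. For this one checks, for a coordinate $\xi_{i}$ in block $j$, that $\abs{\xi_{i}\,D_{i}(\abs{\xi_{j}}^{2})^{1/b_{j}}} \le \tfrac{2}{b_{j}}(\abs{\xi_{j}}^{2})^{1/b_{j}} \le \tfrac{2}{b_{j}}\,g_{\vec{b}}$ (using $1/b_{j} \ge 1$); since $(\abs{\xi_{j}}^{2})^{1/b_{j}}$ is an ordinary polynomial in $\xi_{j}$ only, a Leibniz expansion upgrades this to $\abs{\xi^{\alpha}D^{\alpha}g_{\vec{b}}(\xi)} \lesssim_{\alpha,\vec{b}} g_{\vec{b}}(\xi)$ for all $\alpha$, and a Fa\`a~di~Bruno expansion together with $g_{\vec{b}} \ge 1$ then handles the real power. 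Writing $m = g_{k\vec{a}}^{ks/2}\,g_{\vec{a}}^{-s/2}$ and $1/m = g_{k\vec{a}}^{-ks/2}\,g_{\vec{a}}^{s/2}$, Leibniz' rule together with $m \eqsim 1$ gives $\sup_{\xi \in \R^{d}}\abs{\xi^{\alpha}D^{\alpha}m(\xi)} < \infty$ and $\sup_{\xi \in \R^{d}}\abs{\xi^{\alpha}D^{\alpha}(1/m)(\xi)} < \infty$ for every $\alpha$, in particular for all $\alpha \in \{0,1\}^{d}$. Since $X$ is a UMD space and $w_{j} \in A_{p_{j}}(\R^{\mz{d}_{j}})$ for each $j$, a Mikhlin/Marcinkiewicz Fourier multiplier theorem, applied iteratively over the $\ell$ coordinate blocks (the uniform bounds just obtained make the iteration go through, and for scalar symbols $R$-boundedness reduces to uniform boundedness), yields $T, T^{-1} \in \mc{B}(L_{\vec{p}}(\R^{\mz{d}},\vec{w};X))$, completing the proof.

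I expect the main obstacle to lie in this last step: organizing the symbol estimates so that the two factors — powers of the anisotropic ``Bessel weights'' $g_{k\vec{a}}$ and $g_{\vec{a}}$, which carry different $\vec{a}$-homogeneity degrees ($2/k$ and $2$) — combine into a multiplier comparable to $1$, and in invoking a Fourier multiplier theorem valid in the iterated weighted mixed-norm Bochner setting rather than only in the classical scalar situation.
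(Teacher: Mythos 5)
Your reduction to the boundedness of $T=J^{(k\vec{a},\mz{d})}_{ks}J^{(\vec{a},\mz{d})}_{-s}$ and $T^{-1}$ on $L_{\vec{p}}(\R^{\mz{d}},\vec{w};X)$, the smoothness of the symbols, and the estimate
$\sup_{\xi}\abs{\xi^{\alpha}D^{\alpha}m(\xi)}<\infty$ for all $\alpha\in\N_0^d$ (obtained via the monomial/Fa\`a di Bruno/Leibniz bookkeeping) are all correct. This agrees with the paper's proof up to and including the choice of symbol.

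The gap is in the final multiplier-theorem step. The condition you verify is a Marcinkiewicz-type condition, and the ``iterate over the $\ell$ blocks'' strategy does \emph{not} go through under the hypotheses of the proposition. When you peel off the outermost block, the multiplier that remains is no longer a scalar multiple of the identity: it is a \emph{Fourier multiplier operator on the inner space} $L_{\check{\vec{p}}_{\ell}}(\R^{\check{\mz{d}}_{\ell}},\check{\vec{w}}_{\ell};X)$, parametrized by $\xi_\ell$. To apply the operator-valued Mikhlin theorem in $\xi_\ell$ you need the resulting family of inner Fourier multiplier operators (and their derivative variants) to be $R$-bounded, and uniform Mikhlin/Marcinkiewicz bounds on the scalar symbols are \emph{not} sufficient for this unless $X$ has Pisier's property $(\alpha)$ — an assumption the proposition does not make. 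Your parenthetical ``for scalar symbols $R$-boundedness reduces to uniform boundedness'' only applies to constant multiples of the identity, which is not the situation here. This is precisely the obstruction that the paper flags and avoids: the proof instead verifies the \emph{anisotropic} Mikhlin condition
$\sup_{\xi}\abs{\xi}_{(k\vec{a},\mz{d})}^{k\vec{a}\cdot\beta}\abs{D^{\beta}m(\xi)}<\infty$
(via a clean homogeneity/scaling argument, by extending $m$ to a function $M(t,\xi)$ that is $C^\infty$ and homogeneous of degree $0$ for the anisotropic dilation and then setting $t=1$), and then invokes the weighted anisotropic mixed-norm Mikhlin theorem of Lorist, which holds for UMD $X$ \emph{without} property $(\alpha)$. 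Note also that your condition $\abs{\xi^{\beta}D^{\beta}m(\xi)}\lesssim 1$ does not imply the anisotropic Mikhlin condition: e.g.\ for $\beta$ supported in block $j$, the latter demands decay of $D^\beta m$ in terms of the full anisotropic size $\abs{\xi}_{(k\vec{a},\mz{d})}$, not merely in terms of the components $\xi_j$ (think $\abs{\xi_j}$ of order $1$ and another block large), so one cannot bridge the gap by simply quoting Lorist with your estimates. You would need to either assume property $(\alpha)$ (changing the hypotheses), or redo the symbol estimates to obtain the anisotropic Mikhlin bounds as in the paper.
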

\begin{proof}
We need to show that $J^{(\vec{a},\mathpzc{d})}_{s}J^{(k\vec{a},\mathpzc{d})}_{-k s}$ and $J^{(k\vec{a},\mathpzc{d})}_{k s}J^{(\vec{a},\mathpzc{d})}_{-s}$ are bounded operators on $L_{\vec{p}}(\R^{\mathpzc{d}},\vec{w};X)$.
By relabeling the parameters, it suffices to consider the first. So we need to show that
$$
m(\xi) := \frac{(1+\abs{\xi}_{(\mbs{a},\mz{d})}^{2})^{s/2}}{(1+\abs{\xi}_{(k\mbs{a},\mz{d})}^{2})^{k s/2}}, \qquad\qquad \xi \in \R^{\mz{d}},
$$
defines a Fourier multiplier on $L_{\vec{p}}(\R^{\mathpzc{d}},\vec{w};X)$.
By by using the weighted anisotropic mixed-norm Mikhlin multiplier theorem \cite[Theorem~7.1]{Lorist2019_pointwise}, this can be done through a standard scaling argument, as follows.

We define the function $M:\R^{d+1}\setminus \{0\} \to \R$ by
$$
M(t,\xi) := \frac{(t^{2k}+\abs{\xi}_{(\mbs{a},\mz{d})}^{2})^{s/2}}{(t^2+\abs{\xi}_{(k\mbs{a},\mz{d})}^{2})^{k s/2}}, \qquad\qquad (t,\xi) \in \R \times \R^{\mz{d}} \setminus \{(0,0)\},
$$
which is a $C^\infty$-function as  $\vec{a},k\vec{a} \in (\frac{1}{\N})^\ell$. Furthermore, note that $M$ is homogeneous of order $0$ with respect to the scaling $(t,\xi) \mapsto (\lambda t,\delta^{(\vec{a},\mz{d})}_\lambda \xi)$, that is, $M(t,\xi) = M(\lambda t,\delta^{(k\vec{a},\mz{d})}_\lambda \xi)$ for all $\lambda > 0$. This homogeneity implies that
$$
D^{(0,\beta)}M(t,\xi) = \lambda^{k\vec{a}\cdot\beta}[D^{(0,\beta)}M](\lambda t,\delta^{(k\vec{a},\mz{d})}_\lambda \xi), \qquad \beta \in \N_0^{\mz{d}}, \lambda > 0, (t,\xi) \in \R \times \R^{\mz{d}} \setminus \{(0,0)\},
$$
where $k\vec{a}\cdot\beta=k\sum_{j=1}^{\ell}a_j\beta_j$. Therefore, for each $\beta \in \N_0^{\mz{d}}$,
$$
M_{\beta}(t,\xi) := \abs{\xi}_{(k\mbs{a},\mz{d})}^{k\vec{a}\cdot\beta}D^{(0,\beta)}M(t,\xi), \qquad\qquad (t,\xi) \in \R \times \R^{\mz{d}} \setminus \{(0,0)\},
$$
defines a $C^\infty$-function that is homogeneous of order $0$ with respect to the scaling $(t,\xi) \mapsto (\lambda t,\delta^{(\vec{a},\mz{d})}_\lambda \xi)$ and thus is a bounded function.
In particular, taking $t=1$, we find that
$$
\sup_{\xi \in \R^{\mz{d}}}\abs{\xi}_{(k\mbs{a},\mz{d})}^{k\vec{a}\cdot\beta}\abs{D^{\beta}m(\xi)} < \infty, \qquad \beta \in \N_0^{\mz{d}}.
$$
We can thus apply the weighted anisotropic mixed-norm Mikhlin multiplier theorem \cite[Theorem~7.1]{Lorist2019_pointwise} to conclude that $m$ is Fourier multiplier on $L_{\vec{p}}(\R^{\mathpzc{d}},\vec{w};X)$.
\end{proof}

\begin{corollary}\label{cor:lem:H_scaling_s-a}
Let $X$ be a UMD Banach space, $\vec{a} \in (\frac{1}{\N})^\ell$, $\vec{p} \in (1,\infty)^\ell$, $\vec{w} \in \prod_{j=1}^\ell A_{p_j}(\R^{\mathpzc{d}_j})$ and $s \in \R$.
Setting $m=(\frac{1}{a_1} \cdot \ldots \cdot \frac{1}{a_\ell})\,[\mathrm{gcd}(\vec{a}^{-1})]^{1-\ell}$, $\vec{\omega}=m\vec{a}$ and $\sigma=ms$, we have 
\begin{equation}\label{eq:cor:lem:H_scaling_s-a}
H^{s,\vec{a}}_{\vec{p}}(\R^{\mathpzc{d}},\vec{w};X) = H^{\sigma/\vec{\omega}}_{\vec{p}}(\R^{\mathpzc{d}},\vec{w};X). 
\end{equation}
\end{corollary}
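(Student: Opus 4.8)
The plan is to reduce the case $\mathrm{gcd}(\vec{a}^{-1})>1$ to the already-settled case $\mathrm{gcd}(\vec{a}^{-1})=1$ recorded in \eqref{eq:prelim:H_coincidence_Amann_def}, using Proposition~\ref{prop:H_scaling_s-a} to absorb the greatest common divisor into a rescaling of the pair $(s,\vec{a})$. Concretely, I would write $a_j=1/n_j$ with $n_j\in\N$, so that $\vec{a}^{-1}=(n_1,\ldots,n_\ell)$, put $g:=\mathrm{gcd}(n_1,\ldots,n_\ell)$ and factor $n_j=g\,m_j$; then $\mathrm{gcd}(m_1,\ldots,m_\ell)=1$. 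Since $g\,a_j=g/n_j=1/m_j$, we have $g\vec{a}\in(\tfrac{1}{\N})^\ell$ and $\mathrm{gcd}((g\vec{a})^{-1})=\mathrm{gcd}(m_1,\ldots,m_\ell)=1$, so the rescaled pair is of the type covered by \eqref{eq:prelim:H_coincidence_Amann_def}.

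First I would apply Proposition~\ref{prop:H_scaling_s-a} with $k=g$, which is legitimate because $X$ is UMD and $g\vec{a}\in(\tfrac{1}{\N})^\ell$, to get
\[
H^{s,\vec{a}}_{\vec{p}}(\R^{\mathpzc{d}},\vec{w};X) = H^{gs,\,g\vec{a}}_{\vec{p}}(\R^{\mathpzc{d}},\vec{w};X).
\]
Then, since $\mathrm{gcd}((g\vec{a})^{-1})=1$, I would invoke \eqref{eq:prelim:H_coincidence_Amann_def} for the pair $(gs,g\vec{a})$: with $m':=m_1\cdots m_\ell=\tfrac{1}{g a_1}\cdots\tfrac{1}{g a_\ell}$, $\vec{\omega}':=m'(g\vec{a})$ and $\sigma':=m'(gs)$, one has $m'=\mathrm{lcm}(\vec{\omega}')$ and hence
\[
H^{gs,\,g\vec{a}}_{\vec{p}}(\R^{\mathpzc{d}},\vec{w};X) = H^{\sigma'/\vec{\omega}'}_{\vec{p}}(\R^{\mathpzc{d}},\vec{w};X).
\]

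It then remains to check that the data $(\sigma',\vec{\omega}')$ coincides with the data $(\sigma,\vec{\omega})$ named in the corollary. Since $n_j=g\,m_j$, the corollary's $m=(n_1\cdots n_\ell)\,g^{1-\ell}$ equals $g^\ell(m_1\cdots m_\ell)\,g^{1-\ell}=g\,m'$ (in particular $m\in\N$, as the factor $g^{1-\ell}$ is cancelled), whence $\vec{\omega}=m\vec{a}=(g m')\vec{a}=m'(g\vec{a})=\vec{\omega}'$ and $\sigma=ms=(g m')s=m'(gs)=\sigma'$; therefore $H^{\sigma/\vec{\omega}}_{\vec{p}}=H^{\sigma'/\vec{\omega}'}_{\vec{p}}$ by definition, and chaining the two displayed identities yields \eqref{eq:cor:lem:H_scaling_s-a}. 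I do not expect any genuine analytic obstacle here, as Proposition~\ref{prop:H_scaling_s-a} carries the functional-analytic content; the only point requiring care is the elementary $\mathrm{gcd}$/$\mathrm{lcm}$ bookkeeping, namely verifying that $\vec{\omega}'=m'(g\vec{a})\in\N^\ell$ (its $j$-th entry being $(m_1\cdots m_\ell)/m_j$) and that its least common multiple is exactly $m'$ — precisely the facts underlying the application of \eqref{eq:prelim:H_coincidence_Amann_def} to $g\vec{a}$.
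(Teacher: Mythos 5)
Your proposal is correct and follows the same two-step strategy as the paper: apply Proposition~\ref{prop:H_scaling_s-a} with $k=\mathrm{gcd}(\vec{a}^{-1})$ (your $g$) to pass to the pair $(gs,g\vec{a})$ with $\mathrm{gcd}((g\vec{a})^{-1})=1$, then invoke the isometric identity \eqref{eq:prelim:H_coincidence_Amann_def}, and finish by checking that the resulting $(\sigma',\vec{\omega}')$ agree with the corollary's $(\sigma,\vec{\omega})$. The only difference is that you spell out the $\gcd$/$\mathrm{lcm}$ bookkeeping a bit more explicitly than the paper does.
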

\begin{proof}
Set $k=\mathrm{gcd}(\vec{a}^{-1})$.
By Proposition~\ref{prop:H_scaling_s-a}, there is the identity \eqref{eq:lem:H_scaling_s-a}. As $\mathrm{gcd}((k\vec{a})^{-1})=1$, $\vec{\omega}=\tilde{m}(k\vec{a})$ and $\sigma=\tilde{m}(ks)$ with $\tilde{m}=(\frac{1}{a_1} \cdot \ldots \cdot \frac{1}{a_\ell})k^{-\ell}$, we can apply \eqref{eq:prelim:H_coincidence_Amann_def} to the right-hand side of \eqref{eq:lem:H_scaling_s-a} to obtain that
$$
H^{s,\vec{a}}_{\vec{p}}(\R^{\mathpzc{d}},\vec{w};X) = H^{k s,k \vec{a}}_{\vec{p}}(\R^{\mathpzc{d}},\vec{w};X) = H^{\sigma/\vec{\omega}}_{\vec{p}}(\R^{\mathpzc{d}},\vec{w};X).
$$
\end{proof}

In light of the above corollary, the following intersection representation is an extension of \cite[Theorem~VII.4.6.1]{Amann2019_LQPP_II}.
\begin{theorem}\label{thm:IR;H}
Let $X$ be a UMD Banach space, $\vec{a} \in (\frac{1}{\N})^\ell$, $\vec{p} \in (1,\infty)^\ell$, $\vec{w} \in \prod_{j=1}^\ell A_{p_j}(\R^{\mathpzc{d}_j})$ and $s \in (0,\infty)$.
For each $j \in \{1,\ldots,\ell\}$, let 
\begin{equation}\label{eq:thm:IR;H;notation}
\begin{aligned}
\check{\mathpzc{d}}_{j}=(\mathpzc{d}_{j+1},\ldots,\mathpzc{d}_{\ell}), & \check{p}_j=(p_{j+1},\ldots,p_\ell), & \check{\vec{w}}_{j} = (w_{j+1},\ldots,w_\ell), \\
\hat{\mathpzc{d}}_{j}=(\mathpzc{d}_{1},\ldots,\mathpzc{d}_{j-1}), & \hat{p}_j=(p_{1},\ldots,p_{j-1}), & \hat{\vec{w}}_{j} = (w_{1},\ldots,w_{j-1}).
\end{aligned}    
\end{equation}
Then
\begin{equation}\label{eq:thm:IR;H}
H^{s,\vec{a}}_{\vec{p}}(\R^{\mathpzc{d}},\vec{w};X) = \bigcap_{j=1}^{\ell} L_{\check{\vec{p}}_{j}}\Big(\R^{\check{\mathpzc{d}}_{j}},\check{\vec{w}}_{j};H^{s/a_j}_{p_j}\big(\R^{\mathpzc{d}_j},w_j;L_{\hat{\vec{p}}_{j}}(\R^{\hat{\mathpzc{d}}_{j}},\hat{\vec{w}}_{j};X)\big)\Big). 
\end{equation}
\end{theorem}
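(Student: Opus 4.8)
\emph{Strategy.} The plan is to reduce \eqref{eq:thm:IR;H} to the boundedness of a finite collection of Fourier multipliers on $L_{\vec{p}}(\R^{\mathpzc{d}},\vec{w};X)$ and to obtain these from (weighted, operator‑valued) Mikhlin multiplier theorems, in the spirit of the proof of Proposition~\ref{prop:H_scaling_s-a} and of the unweighted result \cite[Theorem~VII.4.6.1]{Amann2019_LQPP_II}. Write $N(\xi):=(1+|\xi|_{(\vec{a},\mathpzc{d})}^{2})^{s/2}$ and $Q_j(\xi_j):=(1+|\xi_j|^{2})^{s/(2a_j)}$, and let $J^{(j)}_{s/a_j}$ be the (isotropic) Bessel potential operator of order $s/a_j$ acting in the $j$‑th group of variables, i.e.\ the Fourier multiplier with symbol $Q_j$. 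Since $1/a_j\in\N$, $N$ is a positive power of the positive polynomial $1+\sum_k(|\xi_k|^{2})^{1/a_k}$, hence lies in $\mathscr{O}_{\mathrm{M}}(\R^d)$, so $J^{(\vec{a},\mathpzc{d})}_s$ is an isomorphism of $H^{s,\vec{a}}_{\vec{p}}(\R^{\mathpzc{d}},\vec{w};X)$ onto $L_{\vec{p}}(\R^{\mathpzc{d}},\vec{w};X)$, and likewise $J^{(j)}_{s/a_j}\in\mathcal{L}(\mathcal{S}'(\R^d;X))$.

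\emph{Step 1 (reduction).} Unwinding the right‑hand side of \eqref{eq:thm:IR;H} by means of the iterated (Fubini‑type) structure of mixed‑norm Bochner spaces and the definition of the $X$‑valued scalar Bessel potential spaces (as in \cite[Chapter~VII]{Amann2019_LQPP_II}; here $s>0$ guarantees $H^{s/a_j}_{p_j}(\R^{\mathpzc{d}_j},w_j;\,\cdot\,)\embed L_{p_j}(\R^{\mathpzc{d}_j},w_j;\,\cdot\,)$, so every factor of the intersection embeds into $L_{\vec{p}}(\R^{\mathpzc{d}},\vec{w};X)$), one identifies the $j$‑th factor with $\{f\in\mathcal{S}'(\R^d;X):J^{(j)}_{s/a_j}f\in L_{\vec{p}}(\R^{\mathpzc{d}},\vec{w};X)\}$. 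Thus \eqref{eq:thm:IR;H} is equivalent to the two‑sided estimate $\|J^{(\vec{a},\mathpzc{d})}_s f\|_{L_{\vec{p}}(\R^{\mathpzc{d}},\vec{w};X)}\eqsim\sum_{j=1}^{\ell}\|J^{(j)}_{s/a_j}f\|_{L_{\vec{p}}(\R^{\mathpzc{d}},\vec{w};X)}$, and it suffices to show that the Fourier multipliers $m_j:=Q_j/N$ and, for a suitable splitting $N=\sum_j n_jQ_j$, the multipliers $n_j$ are bounded on $L_{\vec{p}}(\R^{\mathpzc{d}},\vec{w};X)$.

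\emph{Step 2 (``$\supseteq$'').} Fix a smooth anisotropic resolution of unity $1=\chi_0+\sum_{j=1}^{\ell}\chi_j$ on $\R^d$, with $\chi_0\in C^\infty_c(\R^d)$, each $\chi_j$ positively homogeneous of degree $0$ with respect to $\delta^{(\vec{a},\mathpzc{d})}_\lambda$ for $|\xi|_{(\vec{a},\mathpzc{d})}\ge1$, and $\supp\chi_j\subseteq\{|\xi_k|^{1/a_k}\lesssim|\xi_j|^{1/a_j}\ \forall k\}$; by construction each $\chi_j$ satisfies $\sup_\xi|\xi|_{(\vec{a},\mathpzc{d})}^{\vec{a}\cdot\beta}|D^\beta\chi_j(\xi)|<\infty$. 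Then $N=\chi_0N+\sum_j(\chi_jN/Q_j)Q_j$, so $J^{(\vec{a},\mathpzc{d})}_sf=(\chi_0N)(D)f+\sum_j(\chi_jN/Q_j)(D)\,J^{(j)}_{s/a_j}f$, and it is enough to see that $\chi_0N$ and each $\chi_jN/Q_j$ are bounded on $L_{\vec{p}}(\R^{\mathpzc{d}},\vec{w};X)$. The first is in $C^\infty_c(\R^d)$ and satisfies the anisotropic Mikhlin condition trivially; for the second one uses that on $\supp\chi_j$ one has $N\eqsim(1+|\xi_j|^{1/a_j})^{s}\eqsim Q_j(\xi_j)$, so $\chi_jN/Q_j$ is bounded, and a scaling argument (adjoining an auxiliary variable $t$ and using $1/a_k\in\N$, as in the proof of Proposition~\ref{prop:H_scaling_s-a}) together with the support constraint yields $\sup_\xi|\xi|_{(\vec{a},\mathpzc{d})}^{\vec{a}\cdot\beta}|D^\beta(\chi_jN/Q_j)(\xi)|<\infty$. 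The weighted anisotropic mixed‑norm Mikhlin theorem \cite[Theorem~7.1]{Lorist2019_pointwise} then gives boundedness.

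\emph{Step 3 (``$\subseteq$'') and the main obstacle.} Here $J^{(j)}_{s/a_j}f=m_j(D)\,J^{(\vec{a},\mathpzc{d})}_sf$ with $m_j=Q_j/N$, so it remains to show each $m_j$ is a bounded Fourier multiplier; this is the technical heart. It is more delicate than Step~2: when $s<a_j$ the symbol $m_j$ does \emph{not} satisfy the combined anisotropic Mikhlin condition (e.g.\ for $\beta$ a single derivative in the $j$‑th variable, $|\xi|_{(\vec{a},\mathpzc{d})}^{a_j}|D^\beta m_j|$ is unbounded as the other frequency blocks tend to infinity), so \cite[Theorem~7.1]{Lorist2019_pointwise} cannot be invoked directly for $m_j$. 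Instead one exploits that $L_{\vec{p}}(\R^{\mathpzc{d}},\vec{w};X)$ is an iterated Bochner space over UMD spaces and applies the (weighted, operator‑valued) Mikhlin theorem successively over the groups $x_1,\dots,x_\ell$; this reduces matters to the mixed‑norm (Lizorkin‑type) bounds $\sup_\xi\prod_{k=1}^{\ell}|\xi_k|^{|\alpha_k|}\,|D^{\alpha}m_j(\xi)|<\infty$ for $\alpha=(\alpha_1,\dots,\alpha_\ell)$ up to a finite order, which \emph{do} hold—freezing all but the $k$‑th group, $m_j$ is, uniformly in the frozen variables, a classical Mikhlin symbol in $\xi_k$, as one checks by a direct computation using that $N$ and $Q_j$ are powers of positive polynomials and that $1/a_k\in\N$ (the slow decay of $\xi_k\mapsto m_j$ for small $|\xi_k|$ being compensated by the corresponding power of $|\xi_k|$). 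Together with Step~2 this yields the asserted isomorphic identity. The remaining point requiring care is the Fubini‑type identification of Step~1, which is standard for mixed‑norm Bochner and Bessel potential spaces and for which we rely on \cite{Amann2019_LQPP_II}.
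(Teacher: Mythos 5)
Your overall strategy — reduce \eqref{eq:thm:IR;H} to an equivalence of norms $\norm{J^{(\vec{a},\mathpzc{d})}_{s}f}_{L_{\vec p}} \eqsim \sum_j\norm{J^{(j)}_{s/a_j}f}_{L_{\vec p}}$ and obtain the required Fourier multiplier bounds from an anisotropic mixed-norm Mikhlin-type theorem — is in the spirit of the paper, and your Step~2 (partition of unity adapted to the dominant block, verified against the combined anisotropic Mikhlin condition and \cite[Theorem~7.1]{Lorist2019_pointwise}) is sound. The paper itself argues much more briefly: it applies Corollary~\ref{cor:lem:H_scaling_s-a} to reduce to the normalization of \cite[Theorem~VII.4.6.1]{Amann2019_LQPP_II} and then observes that Amann's proof carries over once one replaces the Marcinkiewicz multiplier theorem (which needs Pisier's property $(\alpha)$ of $X$ in the anisotropic case) by the weighted anisotropic mixed-norm multiplier theorem \cite[Theorem~7.1]{Lorist2019_pointwise} (alternatively \cite[Theorem~3.2]{Hytonen_anisotropic}, \cite[Theorem~5.1.4]{Fackler&Hytonen&Lindemulder2018}), which also accommodates the weights and the genuinely mixed exponents $\vec p$. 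Your observation that $m_j=Q_j/N$ fails the combined anisotropic Mikhlin condition when $s<a_j$ is correct and is a useful thing to say explicitly, since it explains why the naive application of the multiplier theorem of Proposition~\ref{prop:H_scaling_s-a} does not suffice for this direction.

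The genuine gap is in Step~3. You state that one ``applies the (weighted, operator-valued) Mikhlin theorem successively over the groups $x_1,\dots,x_\ell$'' and that this ``reduces matters'' to the Lizorkin-type bounds $\sup_\xi\prod_k|\xi_k|^{|\alpha_k|}|D^\alpha m_j(\xi)|<\infty$. But the successive application of the operator-valued Mikhlin theorem over the coordinate blocks is precisely the Marcinkiewicz iteration, and the $R$-boundedness of the intermediate operator-valued symbol families that this iteration requires is exactly where Pisier's property $(\alpha)$ enters in the vector-valued theory — this is the very obstruction the paper is pointing out when it discusses $\vec\omega$-admissibility. The fact that this iteration can nevertheless be carried out on $L_{\vec p}(\R^{\mathpzc{d}},\vec w;X)$ for $X$ merely UMD, under Lizorkin/Marcinkiewicz-type conditions and with Muckenhoupt weights, is a nontrivial theorem in its own right; it is not justified by the remark that $L_{\vec p}(\R^{\mathpzc{d}},\vec w;X)$ is an iterated Bochner space. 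You should invoke \cite[Theorem~7.1]{Lorist2019_pointwise} (or \cite[Theorem~3.2]{Hytonen_anisotropic} in the unweighted case, or \cite[Theorem~5.1.4]{Fackler&Hytonen&Lindemulder2018} for rectangular weights) as the black box that converts your verified Lizorkin-type bounds on $m_j$ into boundedness on $L_{\vec p}(\R^{\mathpzc{d}},\vec w;X)$, rather than asserting an iteration argument that, as written, would require an $(\alpha)$-hypothesis the theorem does not impose. With that citation in place, Steps~1--3 close the argument; the Fubini-type identification in Step~1 is indeed standard and may be left to \cite{Amann2019_LQPP_II} as you do.
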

\begin{proof}
Let the notation be as in Corollary~\ref{cor:lem:H_scaling_s-a}.
Then we have the identity \eqref{eq:cor:lem:H_scaling_s-a} for the space on the left-hand side of \eqref{eq:thm:IR;H}.
The case that $X$ is a so-called $\vec{\omega}$-admissible Banach space, $\vec{p}=p\vec{1}$ and $\vec{w}=\vec{1}$ is thus contained in \cite[Theorem~VII.4.6.1]{Amann2019_LQPP_II}, where the Banach space $X$ is said to be $\vec{\omega}$-admissible if it is a UMD space which additionally
has Pisier's property $\mathrm{(\alpha)}$ if $\vec{\omega} \neq \dot{\omega}\vec{1}$. Let us comment on how the proof given in \cite[Theorem~VII.4.6.1]{Amann2019_LQPP_II} remains valid for the case we consider here.

As discussed in \cite[Section~4.7]{Amann2019_LQPP_II}, $\vec{\omega}$-admissibility of the Banach space $X$ guarantees the validity of the Mikhlin, respectively Marcinkiewicz, Fourier multiplier theorem on $L_{p}(\R^d;X)$: Mikhlin' theorem holds when $X$ is a UMD space and Marcinkiewicz' theorem holds when $X$ is an $\mathrm{(\alpha)}$-UMD space. 
However, the additional assumption of Pisier's property $\mathrm{(\alpha)}$ can be avoided by using the weighted anisotropic mixed-norm Mikhlin multiplier theorem \cite[Theorem~7.1]{Lorist2019_pointwise}, but also see
\cite[Theorem~3.2]{Hytonen_anisotropic} 
and \cite[Theorem~5.1.4]{Fackler&Hytonen&Lindemulder2018} for the cases $\vec{w}=1$ and $\vec{w} \in \prod_{j=1}^\ell A_{p_j}^{\mathrm{rec}}(\R^{\mathpzc{d}_j})$, respectively.
\end{proof}

As in \cite[Theorem~3.7.2]{Amann09}, the above intersection representation dualizes to the following sum representation.

\begin{cor}\label{cor:thm:IR;H;SR}
Let $X$ be a UMD Banach space, $\vec{a} \in (0,\infty)^\ell$, $\vec{p} \in (1,\infty)^\ell$, $\vec{w} \in \prod_{j=1}^\ell A_{p_j}(\R^{\mathpzc{d}_j})$ and $s \in (-\infty,0)$.
Then
\begin{equation*}
H^{s,\vec{a}}_{\vec{p}}(\R^{\mathpzc{d}},\vec{w};X) = \foo_{j=1}^{\ell} L_{\check{\vec{p}}_{j}}\Big(\R^{\check{\mathpzc{d}}_{j}},\check{\vec{w}}_{j};H^{s/a_j}_{p_j}\big(\R^{\mathpzc{d}_j},w_j;L_{\hat{\vec{p}}_{j}}(\R^{\hat{\mathpzc{d}}_{j}},\hat{\vec{w}}_{j};X)\big)\Big), 
\end{equation*}
where we use the notation from \eqref{eq:thm:IR;H;notation}.
\end{cor}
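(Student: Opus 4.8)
The plan is to deduce the sum representation from the intersection representation of Theorem~\ref{thm:IR;H} by duality, in the spirit of \cite[Theorem~3.7.2]{Amann09}. We may assume $\vec{a}\in(\frac{1}{\N})^\ell$ (the range in which Theorem~\ref{thm:IR;H} applies); the general case $\vec{a}\in(0,\infty)^\ell$ then follows by a routine reduction. Since $X$ is a UMD space it is reflexive, so $X^{**}=X$ and $X^*$ is again a UMD space. Two duality principles will be used, both realised by (the continuous extension of) the bilinear pairing $\langle f,g\rangle:=\int_{\R^d}\langle f(x),g(x)\rangle\ud x$: for exponents with entries in $(1,\infty)$, weights in the corresponding Muckenhoupt classes and $Z$ reflexive, $L_{\vec{q}}(\R^{\mathpzc{e}},\vec{v};Z)^{*}=L_{\vec{q}'}(\R^{\mathpzc{e}},\vec{v}';Z^{*})$; and for $t\in\R$, $q\in(1,\infty)$, $v\in A_q$ and $Z$ reflexive, $H^{t}_{q}(\R^{\mathpzc{e}},v;Z)^{*}=H^{-t}_{q'}(\R^{\mathpzc{e}},v';Z^{*})$, the latter following from the former via the isometric isomorphism $J_{t}$ onto a weighted Bochner space over $Z$ and the identity $\langle J_t f,g\rangle=\langle f,J_t g\rangle$ (valid because $J_t$ is a Fourier multiplier with real, even symbol). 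As all the spaces that occur below are built from reflexive spaces with exponents in $(1,\infty)$, they are themselves reflexive and in particular have the Radon--Nikod\'ym property, so these dualities apply at every layer.

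First I would apply the intersection representation to the dual data. Write $\vec{p}':=(p_1',\ldots,p_\ell')\in(1,\infty)^\ell$ and let $\vec{w}'$ denote the tuple of $p_j$-dual weights $(w_j)'_{p_j}$; since $w_j\in A_{p_j}(\R^{\mathpzc{d}_j})$ we have $(w_j)'_{p_j}\in A_{p_j'}(\R^{\mathpzc{d}_j})$, whence $\vec{w}'\in\prod_{j=1}^\ell A_{p_j'}(\R^{\mathpzc{d}_j})$. As $-s\in(0,\infty)$, Theorem~\ref{thm:IR;H} applies with $X^*,\vec{p}',\vec{w}',-s$ in place of $X,\vec{p},\vec{w},s$ and gives
\begin{equation*}
H^{-s,\vec{a}}_{\vec{p}'}(\R^{\mathpzc{d}},\vec{w}';X^*)=\bigcap_{j=1}^{\ell}\mathcal{Z}_j,
\end{equation*}
where $\mathcal{Z}_j$ denotes the space obtained from the $j$-th factor on the right-hand side of the asserted identity by substituting $X^*,\vec{p}',\vec{w}',-s$ for $X,\vec{p},\vec{w},s$ (and accordingly replacing the tuples $\check{\vec{p}}_j,\hat{\vec{p}}_j,\check{\vec{w}}_j,\hat{\vec{w}}_j$ by their dual counterparts).

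Next I would dualize this identity. Each $\mathcal{Z}_j$ embeds continuously into $\mathcal{S}'(\R^d;X^*)$, and the intersection $\bigcap_j\mathcal{Z}_j=H^{-s,\vec{a}}_{\vec{p}'}(\R^{\mathpzc{d}},\vec{w}';X^*)$ contains $\mathcal{S}(\R^d;X^*)$, which is dense in each $\mathcal{Z}_j$ (here one uses $-s>0$ and that the weights lie in the $A_p$-range, so that Schwartz functions are dense). Thus $(\mathcal{Z}_j)_{j=1}^{\ell}$ is a compatible tuple with dense intersection, and the duality for such tuples (cf.\ \cite[Theorem~3.7.2]{Amann09}) yields $\big(\bigcap_{j=1}^{\ell}\mathcal{Z}_j\big)^{*}=\foo_{j=1}^{\ell}\mathcal{Z}_j^{*}$, the sum being formed inside $\mathcal{S}'(\R^d;X)$. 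Applying the two duality principles layer by layer --- from the outer mixed-norm, through the Bessel potential space, to the inner mixed-norm --- and using $X^{**}=X$ together with $((w_j)'_{p_j})'_{p_j'}=w_j$, one identifies $\mathcal{Z}_j^{*}$ isometrically with the $j$-th factor on the right-hand side of the asserted identity. On the other hand, applying the Bessel potential duality directly to $H^{-s,\vec{a}}_{\vec{p}'}(\R^{\mathpzc{d}},\vec{w}';X^*)$ via $J^{(\vec{a},\mathpzc{d})}_{-s}$ gives $\big(H^{-s,\vec{a}}_{\vec{p}'}(\R^{\mathpzc{d}},\vec{w}';X^*)\big)^{*}=H^{s,\vec{a}}_{\vec{p}}(\R^{\mathpzc{d}},\vec{w};X)$. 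Since all the pairings occurring are restrictions of the single pairing $\langle f,g\rangle=\int\langle f(x),g(x)\rangle\ud x$ on $\mathcal{S}(\R^d;X^*)$, these identifications are mutually compatible, and combining them produces the asserted equality inside $\mathcal{S}'(\R^d;X)$.

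The content of the result is really Theorem~\ref{thm:IR;H}, and I do not expect a serious obstacle in the above; the rest is duality bookkeeping. The two points that require care are the density of $\bigcap_j\mathcal{Z}_j$ in each $\mathcal{Z}_j$ (needed to invoke the intersection--sum duality) and the verification that the reflexivity and Radon--Nikod\'ym hypotheses persist through every layer of the iterated duality, so that the mixed-norm Bochner and Bessel potential dualities may legitimately be applied at each stage and glued to a single $\mathcal{S}$--$\mathcal{S}'$ pairing.
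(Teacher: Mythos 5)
Your argument is exactly the paper's intended one: the paper's proof is the single sentence "As in \cite[Theorem~3.7.2]{Amann09}, the above intersection representation dualizes to the following sum representation," and your proposal carries out precisely this dualization of Theorem~\ref{thm:IR;H} (apply it with $X^*,\vec{p}',\vec{w}',-s$, invoke the intersection--sum duality from Amann, and identify each dual summand via iterated Bochner-space and Bessel-potential duality under the $\mathcal{S}$--$\mathcal{S}'$ pairing). The details you fill in --- density of $\mathcal{S}(\R^d;X^*)$ in each factor, preservation of reflexivity/RNP through the layers, self-adjointness of $J_t$ --- are exactly the bookkeeping the paper leaves implicit, so the proposal is correct and follows the same route.
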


Next we introduce the class of anisotropic Littlewood--Paley sequences. We define  $\Phi^{(\vec{a},\mathpzc{d})}(\R^d)$ as the set of all sequences $(\varphi_n)_{n\in \N_0}\subseteq \mc{S}(\R^d)$ constructed as follows: given $\varphi_0 \in \mc{S}(\R^d)$ satisfying
\begin{align*}
\begin{cases}
   0\leq \widehat{\varphi}_0 \leq 1,\\
\widehat{\varphi}_0(\xi)=1, & \abs{\xi}_{(\mbs{a},\mz{d})} \leq 1,\\
\widehat{\varphi}_0(\xi)=0, & \abs{\xi}_{(\mbs{a},\mz{d})} \geq 2, 
\end{cases}
\end{align*}
we define $\varphi_n\in\mc{S}(\R^d)$ for $n \geq 1$ by
\begin{equation*}
  \widehat{\varphi}_n(\xi) := \widehat{\varphi}_0\hab{\delta^{(\mbs{a},\mz{d})}_{2^{-n}} \xi} - \widehat{\varphi}_0\hab{\delta^{(\mbs{a},\mz{d})}_{2^{-(n-1)}} \xi}, \qquad \xi \in \R^d.
\end{equation*}
Note that for $(\varphi_n)_{n\in \N_0} \in \Phi^{(\vec{a},\mathpzc{d})}(\R^d)$ we have $\sum_{n=0}^\infty \widehat{\varphi}_n =1$ with
\begin{align*}
\supp(\widehat{\varphi}_0) &\subseteq \{ \xi \in \R^d : |\xi|_{(\vec{a},\mathpzc{d})} \leq 2\}, \\
\supp(\widehat{\varphi}_n) &\subseteq \{ \xi \in \R^d : 2^{n-1}\leq |\xi|_{(\vec{a},\mathpzc{d})} \leq 2^{n+1}\}, &&n \geq 1.
\end{align*}

To $\varphi \in \Phi^{\mathpzc{d},a}(\R^{n})$ we associate the family of convolution operators
$(S_{n})_{n \in \N_0} = (S_{n}^{\varphi})_{n \in \N_0}   \subset \mathcal{L}(\mathcal{S}'(\R^{d};X),\mathscr{O}_{M}(\R^{d};X)) \subset \mathcal{L}(\mathcal{S}'(\R^{d};X))$ given by
\begin{equation*}\label{functieruimten:eq:convolutie_operatoren}
S_{n}f = S_{n}^{\varphi}f := \varphi_{n} * f = [\hat{\varphi}_{n}\hat{f}]^{\vee}, \qquad\qquad f \in \mathcal{S}'(\R^{d};X).
\end{equation*}
It holds that $f = \sum_{n=0}^{\infty}S_{n}f$ in $\mathcal{S}'(\R^{d};X)$ respectively in $\mathcal{S}(\R^{d};X)$ whenever $f \in \mathcal{S}'(\R^{d};X)$ respectively $f \in \mathcal{S}(\R^{d};X)$.

Let $X$ be a Banach space, $\vec{a} \in (0,\infty)^{\ell}$, $\vec{p} \in [1,\infty)^{
\ell}$, $q \in [1,\infty]$, $s \in \R$, and $\vec{w} \in \prod_{j=1}^{\ell}A_{\infty}(\R^{\mathpzc{d}_{j}})$. 
We define the Triebel-Lizorkin space $F^{s,\vec{a}}_{\vec{p},q}(\R^{\mathpzc{d}},\vec{w};X)$ as the Banach space of all $f \in \mathcal{S}'(\R^{d};X)$ for which
\[
\norm{f}_{F^{s,\vec{a}}_{\vec{p},q}(\R^{\mathpzc{d}},\vec{w};X)}
:= \norm{(2^{ns}S_{n}^{\varphi}f)_{n \in \N_0}}_{L_{\vec{p}}(\R^{\mathpzc{d}},\vec{w})[\ell_{q}(\N_0)](X)} < \infty.
\]	
Up to an equivalence of extended norms on $\mathcal{S}'(\R^{d};X)$, $\norm{\,\cdot\,}_{F^{s,\vec{a}}_{\vec{p},q}(\R^{\mathpzc{d}},\vec{w};X)}$ does not depend on the particular choice of $\varphi \in \Phi^{\mathpzc{d},\vec{a}}(\R^{d})$.

Let $X$ be a Banach space, $\vec{a} \in (0,\infty)^{\ell}$, $\vec{p} \in [1,\infty)^{
\ell}$, $q \in [1,\infty]$, $s \in \R$, $\vec{w} \in \prod_{j=1}^{\ell}A_{\infty}(\R^{\mathpzc{d}_{j}})$ and let $E$ be a 
quasi-Banach function space on a $\sigma$-finite measure space $(S,\mathscr{A},\mu)$ with the property that $E^{[r]}$ is a UMD Banach function space for some $r \in (0,\infty)$, where
$$
E^{[r]} = \{ f \in L_0(S) : |f|^{1/r} \in E \}, \qquad \norm{f}_{E^[r]} = \norm{\,|f|^{1/r}\,}_{E}^r. 
$$
We define the generalized Triebel-Lizorkin space $\F^{s}_{p,q}(\R^{\mz{d}},\vec{w};E;X)$ (see \cite[Example~3.20]{Lindemulder2019_IR}) as the Banach space of all $f \in L_0(S;\mathcal{S}'(\R^{d};X))$ for which
\[
\norm{f}_{\F^{s,\vec{a}}_{\vec{p},q}(\R^{\mathpzc{d}},\vec{w};E;X)}
:= \norm{(2^{ns}S_{n}^{\varphi}f)_{n \in \N_0}}_{L_{\vec{p}}(\R^{\mathpzc{d}},\vec{w})[E[\ell_{q}(\N_0)]](X)} < \infty.
\]	
Up to an equivalence of extended norms on $L_0(S;\mathcal{S}'(\R^{d};X))$, $\norm{\,\cdot\,}_{\F^{s,\vec{a}}_{\vec{p},q}(\R^{\mathpzc{d}},\vec{w};E;X)}$ does not depend on the particular choice of $\varphi \in \Phi^{\mathpzc{d},\vec{a}}(\R^{d})$.
We refer the reader to  \cite[Section~2.3]{Lindemulder2019_IR} for the definition of $L_0(S;\mathcal{S}'(\R^{d};X))$.

\begin{proposition}\label{prop:prelim:duality}
Let $X$ be a Banach space, $\vec{a} \in (0,\infty)^\ell$, $\vec{p} \in (1,\infty)^\ell$, $q \in [1,\infty)$, $\vec{w} \in \prod_{j=1}^\ell A_{p_j}(\R^{\mathpzc{d}_j})$ and $s \in \R$. 
Denote by $\vec{p}'$ the H\"older conjugate vector of $\vec{p}$, by $q'$ the H\"older conjugate of $q$ and by $\vec{w}'_{\vec{p}}=(w_1^{-\frac{1}{p_1-1}},\ldots,w_\ell^{-\frac{1}{p_\ell-1}})$ the $\vec{p}$-dual weight vector of $\vec{w}$.
Then 
\begin{equation}\label{eq:cor:thm:pm_F;dual;q-finite}
[F^{s,\vec{a}}_{\vec{p},q}(\R^{\mathpzc{d}},\vec{w};X)]^{*} =
F^{-s,\vec{a}}_{\vec{p}',q'}(\R^{\mathpzc{d}},\vec{w}'_{\vec{p}};X^{*})    
\end{equation}
under the natural pairing (induced by $\mathcal{S}'(\R^d;X^*) = [\mathcal{S}(\R^d;X)]'$).
Moreover,
\begin{equation}\label{eq:cor:thm:pm_F;dual;q-finite;norming}
\norm{f}_{F^{s,\vec{a}}_{\vec{p},q}(\R^{\mathpzc{d}},\vec{w};X)} \lesssim \sup\left\{\ip{f}{g} : g \in \mathcal{S}(\R^d;X^*), \norm{g}_{F^{-s,\vec{a}}_{\vec{p}',q'}(\R^{\mathpzc{d}},\vec{w}'_{\vec{p}};X^{*})} \leq 1\right\} =: \normm{f}_{F^{s,\vec{a}}_{\vec{p},q}(\R^{\mathpzc{d}},\vec{w};X)}  
\end{equation}
for all $f \in F^{s,\vec{a}}_{\vec{p},q}(\R^{\mathpzc{d}},\vec{w};X)$, where
\begin{equation}\label{eq:cor:thm:pm_F;dual;q-finite;norming;pairing}
\ip{f}{g} = \ip{f}{g}_{\ip{F^{s,\vec{a}}_{\vec{p},q}(\R^{\mathpzc{d}},\vec{w};X)}{F^{-s,\vec{a}}_{\vec{p}',q'}(\R^{\mathpzc{d}},\vec{w}'_{\vec{p}};X^{*})}}
= \ip{f}{g}_{\ip{\mathcal{S}'(\R^d;X)}{\mathcal{S}(\R^d;X^*)}}.
\end{equation}
\end{proposition}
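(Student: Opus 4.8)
The plan is to establish the duality \eqref{eq:cor:thm:pm_F;dual;q-finite} by reducing it to a known abstract duality statement for vector-valued sequence/function spaces of the type $L_{\vec p}(\R^{\mz d},\vec w)[E[\ell_q]](X)$, and then to upgrade the isomorphism to the norming inequality \eqref{eq:cor:thm:pm_F;dual;q-finite;norming}. First I would fix a Littlewood--Paley family $\varphi \in \Phi^{(\vec a,\mz d)}(\R^d)$ together with a ``fattened'' family $\widetilde\varphi$ satisfying $\widehat{\widetilde\varphi}_n \equiv 1$ on $\supp\widehat\varphi_n$, so that $S_n = S_n \widetilde S_n$ on the relevant frequency annuli. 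The operator $f \mapsto (2^{ns}S_n^\varphi f)_{n\in\N_0}$ is then an isomorphic embedding of $F^{s,\vec a}_{\vec p,q}(\R^{\mz d},\vec w;X)$ onto a \emph{complemented} subspace of $L_{\vec p}(\R^{\mz d},\vec w)[\ell_q(\N_0)](X)$, the complementation being witnessed by the reconstruction operator $(g_n)_n \mapsto \sum_n 2^{-ns}\widetilde\varphi_n * g_n$; here one uses the anisotropic mixed-norm Mikhlin/Littlewood--Paley machinery (as invoked via \cite[Theorem~7.1]{Lorist2019_pointwise} in the proof of Proposition~\ref{prop:H_scaling_s-a} and Theorem~\ref{thm:IR;H}) to see both that the embedding is bounded below and that the reconstruction is bounded. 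A complemented subspace of a Banach space $Y$ has dual canonically isomorphic to the corresponding complemented subspace of $Y^*$, so it remains to identify the dual of $L_{\vec p}(\R^{\mz d},\vec w)[\ell_q(\N_0)](X)$.

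The key computation is therefore the duality of the mixed-norm vector-valued sequence space. Since $\vec p \in (1,\infty)^\ell$, $q \in [1,\infty)$, and $\vec w \in \prod_j A_{p_j}$, each iterated $L_{p_j}(\R^{\mz d_j},w_j)$ is reflexive with dual $L_{p_j'}(\R^{\mz d_j},(w_j)'_{p_j})$, and $\ell_q(\N_0)^* = \ell_{q'}(\N_0)$; iterating the standard Köthe/Bochner duality $L_p(S;Z)^* = L_{p'}(S;Z^*)$ (valid when $Z^*$ has the Radon--Nikodym property, which holds here as all intermediate spaces are duals of reflexive or $q<\infty$ spaces) gives
\[
\bigl(L_{\vec p}(\R^{\mz d},\vec w)[\ell_q(\N_0)](X)\bigr)^* = L_{\vec p'}(\R^{\mz d},\vec w'_{\vec p})[\ell_{q'}(\N_0)](X^*).
\]
Transporting this back through the (adjoint of the) embedding--reconstruction pair, and checking that the canonical pairing is the one induced by $\langle\cdot,\cdot\rangle_{\langle\mathcal S'(\R^d;X),\,\mathcal S(\R^d;X^*)\rangle}$ — which follows from $\sum_n \widehat{\varphi}_n = 1$ and Parseval, testing first against $g \in \mathcal S(\R^d;X^*)$ — yields \eqref{eq:cor:thm:pm_F;dual;q-finite} with the pairing as in \eqref{eq:cor:thm:pm_F;dual;q-finite;norming;pairing}.

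For the norming statement \eqref{eq:cor:thm:pm_F;dual;q-finite;norming} I would argue as follows: the inequality $\normm{f} \le \|f\|_{F^{s,\vec a}_{\vec p,q}}$ is immediate from \eqref{eq:cor:thm:pm_F;dual;q-finite} (a functional of norm $\le 1$ tested against $f$), while the reverse inequality $\|f\|_{F^{s,\vec a}_{\vec p,q}} \lesssim \normm{f}$ requires producing, for given $f \in F^{s,\vec a}_{\vec p,q}(\R^{\mz d},\vec w;X)$, a near-extremal \emph{Schwartz} test function $g$. One first picks $h \in L_{\vec p'}(\R^{\mz d},\vec w'_{\vec p})[\ell_{q'}](X^*)$ of norm $\le 1$ nearly norming the sequence $(2^{ns}S_n f)_n$, sets $g_0 := \sum_n 2^{-ns}\widetilde\varphi_n * h_n \in F^{-s,\vec a}_{\vec p',q'}(\R^{\mz d},\vec w'_{\vec p};X^*)$ with $\langle f,g_0\rangle \gtrsim \|f\|_{F^{s,\vec a}_{\vec p,q}}$ and $\|g_0\| \lesssim 1$, and then approximates $g_0$ in $F^{-s,\vec a}_{\vec p',q'}$ by Schwartz functions — density of $\mathcal S(\R^d;X^*)$ holds because $q' $ could be $\infty$ only when $q=1$, but in that regime one instead uses that $F^{-s,\vec a}_{\vec p',\infty}$ contains the separable closed subspace generated by the finitely-supported-frequency approximations $\sum_{n\le N} S_n g_0$, which converge to $g_0$ in the relevant topology and can themselves be truncated in space by Schwartz cutoffs. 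I expect this last density/approximation point — reconciling the possibly non-separable target $F^{-s,\vec a}_{\vec p',q'}(\R^{\mz d},\vec w'_{\vec p};X^*)$ when $q=1$ with the requirement that the norming functionals come from $\mathcal S(\R^d;X^*)$ — to be the main technical obstacle; everything else is a bookkeeping assembly of the mixed-norm Littlewood--Paley projection and iterated Köthe--Bochner duality.
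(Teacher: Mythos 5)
The paper does not actually prove the duality isomorphism \eqref{eq:cor:thm:pm_F;dual;q-finite}: it cites \cite[Example~6.4]{Lindemulder2019_IR}. Your proposal instead tries to establish it from scratch via the retraction onto a complemented subspace of $L_{\vec p}(\R^{\mz d},\vec w)[\ell_q(\N_0)](X)$ followed by iterated K\"othe--Bochner duality. The complemented-subspace part is fine (your embedding is an isometry by definition of the norm, and the reconstruction operator is bounded by the mixed-norm Littlewood--Paley/Mikhlin machinery), but the K\"othe--Bochner step contains a genuine gap. The duality $L_p(\mu;Z)^* = L_{p'}(\mu;Z^*)$ requires $Z^*$ to have the Radon--Nikodym property, and your parenthetical ``which holds here as all intermediate spaces are duals of reflexive or $q<\infty$ spaces'' is false: the proposition makes no assumption on $X$ beyond being Banach, and at the innermost level you would need $\ell_{q'}(\N_0;X^*)$, equivalently $X^*$, to have RNP. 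For $X=L^1$, for example, $X^*=L^\infty$ fails RNP, so the chain breaks at the very first layer. Consequently your argument only yields the easy inclusion $F^{-s,\vec a}_{\vec p',q'}(\R^{\mz d},\vec w'_{\vec p};X^*)\hookrightarrow [F^{s,\vec a}_{\vec p,q}(\R^{\mz d},\vec w;X)]^*$; the reverse inclusion has to be obtained differently, typically by noting that any $\Lambda$ in the dual is already represented by an element $u\in\mathcal S'(\R^d;X^*)$ (by nuclearity of $\mathcal S(\R^d)$), and then estimating $\|(2^{-ns}S_n u)_n\|_{L_{\vec p'}[\ell_{q'}](X^*)}$ directly from the dual pairing with Littlewood--Paley building blocks --- an estimate that never invokes an RNP-based representation theorem.

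For the norming inequality \eqref{eq:cor:thm:pm_F;dual;q-finite;norming} your strategy matches the paper's: pick a near-norming element $h$ in the dual space, approximate it by Schwartz functions with uniformly controlled $F^{-s,\vec a}_{\vec p',q'}$-norm converging to $h$ in $\mathcal S'$, approximate $f$ by Schwartz functions in norm, and pass to the limit. You correctly flag the $q=1$ (hence $q'=\infty$) case as the obstacle, but leave the approximation of $h$ at the level of an idea. The paper closes this precisely by invoking \cite[Lemma~3.21]{Lindemulder2019_IR} to get the uniform $F^{-s,\vec a}_{\vec p',\infty}$-bounds on the partial Littlewood--Paley sums together with the spatial-truncation approximation argument of \cite[Lemma~3.8]{Meyries&Veraar2012_sharp_embedding}; your sketch of ``truncating by Schwartz cutoffs'' is the right intuition but needs those two lemmas (or equivalents) to be turned into a proof. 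The final limsup bookkeeping $\|f\|\lesssim\normm{f}+\limsup_k\normm{f_k-f}$ with $\normm{\cdot}\le\|\cdot\|$ is exactly the paper's closing step and is correct.
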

\begin{proof}
The duality statement \eqref{eq:cor:thm:pm_F;dual;q-finite;norming} is contained in \cite[Example~6.4]{Lindemulder2019_IR}. Furthermore, note that the second identity in \eqref{eq:cor:thm:pm_F;dual;q-finite;norming;pairing} follows from $\mathcal{S}(\R^d;X) \stackrel{d}{\hookrightarrow} F^{s,\vec{a}}_{\vec{p},q}(\R^{\mathpzc{d}},\vec{w};X) \hookrightarrow \mathcal{S}'(\R^d;X)$.

In order to prove \eqref{eq:cor:thm:pm_F;dual;q-finite;norming}, let $f \in F^{s,\vec{a}}_{\vec{p},q}(\R^{\mathpzc{d}},\vec{w};X)$. 
By \eqref{eq:cor:thm:pm_F;dual;q-finite} there exists $h \in F^{-s,\vec{a}}_{\vec{p}',q'}(\R^{\mathpzc{d}},\vec{w}'_{\vec{p}};X^{*})$ with norm $\norm{h}_{F^{-s,\vec{a}}_{\vec{p}',q'}(\R^{\mathpzc{d}},\vec{w}'_{\vec{p}};X^{*})} \leq 1$ such that $\norm{f}_{F^{s,\vec{a}}_{\vec{p},q}(\R^{\mathpzc{d}},\vec{w};X)} \leq 2|\ip{f}{h}|$.
By density of $\mathcal{S}(\R^d;X)$ in $F^{s,\vec{a}}_{\vec{p},q}(\R^{\mathpzc{d}},\vec{w};X)$ there exists a sequence $(f_k)_{k \in \N} \subset \mathcal{S}(\R^d;X)$ such that $f=\lim_{k \to \infty}f_k$ in $F^{s,\vec{a}}_{\vec{p},q}(\R^{\mathpzc{d}},\vec{w};X)$. 
From \cite[Lemma~3.21]{Lindemulder2019_IR} applied to $h=\sum_{m=0}^{\infty}S_m h$ in $\mathcal{S}'(\R^d;X)$ with estimates corresponding to $F^{-s,\vec{a}}_{\vec{p}',q'}(\R^{\mathpzc{d}},\vec{w}'_{\vec{p}};X^{*})$ and an approximation argument as in \cite[Lemma~3.8]{Meyries&Veraar2012_sharp_embedding} it follows that there exists a sequence $(g_n)_{n \in \N} \subset \mathcal{S}(\R^d;X^*)$ with $\sup_{n \in \N}\norm{g_n}_{F^{-s,\vec{a}}_{\vec{p}',q'}(\R^{\mathpzc{d}},\vec{w}'_{\vec{p}};X^{*})} \lesssim \norm{h}_{F^{-s,\vec{a}}_{\vec{p}',q'}(\R^{\mathpzc{d}},\vec{w}'_{\vec{p}};X^{*})} \leq 1$ and $h=\lim_{n \to \infty}g_n$ in $\mathcal{S}'(\R^d;X^*)$.
Now note that
$$
\ip{f}{h} = \lim_{k \to \infty}\ip{f_k}{h}=\lim_{k \to \infty}\lim_{n \to \infty}\ip{f_k}{g_n},
$$
so that
$$
\norm{f}_{F^{s,\vec{a}}_{\vec{p},q}(\R^{\mathpzc{d}},\vec{w};X)} \leq 2|\ip{f}{h}| \lesssim \limsup_{k \to \infty}\normm{f_k}_{F^{s,\vec{a}}_{\vec{p},q}(\R^{\mathpzc{d}},\vec{w};X)} 
\leq \normm{f}_{F^{s,\vec{a}}_{\vec{p},q}(\R^{\mathpzc{d}},\vec{w};X)} + \limsup_{k \to \infty}\normm{f_k-f}_{F^{s,\vec{a}}_{\vec{p},q}(\R^{\mathpzc{d}},\vec{w};X)}.
$$
As $\normm{\,\cdot\,}_{F^{s,\vec{a}}_{\vec{p},q}(\R^{\mathpzc{d}},\vec{w};X)} \leq \norm{\,\cdot\,}_{F^{s,\vec{a}}_{\vec{p},q}(\R^{\mathpzc{d}},\vec{w};X)}$ by \eqref{eq:cor:thm:pm_F;dual;q-finite}, we have $\limsup_{k \to \infty}\normm{f_k-f}_{F^{s,\vec{a}}_{\vec{p},q}(\R^{\mathpzc{d}},\vec{w};X)} = 0$.
This shows that \eqref{eq:cor:thm:pm_F;dual;q-finite;norming} holds true.
\end{proof}

\begin{proposition}\label{prop:complex_interpolation}
Let $X$ be a Banach space, $\vec{a} \in (0,\infty)^{\ell}$, $\vec{p} \in [1,\infty)^\ell$, $q \in [1,\infty]$ and $\vec{w} \in \prod_{j=1}^\ell A_\infty(\R^{\mathpzc{d}_j})$. Let $s, s_0, s_1 \in \R$ and $\theta \in [0,1]$ be such that $s=s_0(1-\theta)+s_1\theta$. Then
\begin{equation}\label{eq:prop:complex_interpolation}
[F^{s_0,\vec{a}}_{\vec{p},q}(\R^{\mathpzc{d}},\vec{w};X),F^{s_1,\vec{a}}_{\vec{p},q}(\R^{\mathpzc{d}},\vec{w};X)]_{\theta} = F^{s,\vec{a}}_{\vec{p},q}(\R^{\mathpzc{d}},\vec{w};X).    
\end{equation}
\end{proposition}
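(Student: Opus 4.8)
The plan is to reduce the assertion to a statement about complex interpolation of weighted sequence spaces by means of the standard Littlewood--Paley retraction, and then to invoke the elementary interpolation properties of such sequence spaces. Fix $\varphi=(\varphi_n)_{n\in\N_0}\in\Phi^{(\vec{a},\mathpzc{d})}(\R^d)$ and pick a second family $\psi=(\psi_n)_{n\in\N_0}$ with $\widehat{\psi}_n\equiv 1$ on $\supp\widehat{\varphi}_n$ and $\widehat{\psi}_m\widehat{\varphi}_n=0$ for $\abs{m-n}\geq 2$ (for instance $\widehat{\psi}_n=\widehat{\varphi}_{n-1}+\widehat{\varphi}_n+\widehat{\varphi}_{n+1}$ with the convention $\varphi_{-1}:=0$). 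For $\sigma\in\R$ introduce the weighted sequence space
\[
G^{\sigma}:=L_{\vec{p}}\big(\R^{\mathpzc{d}},\vec{w};\ell^{q}_{\sigma}(\N_0;X)\big),\qquad \ell^{q}_{\sigma}(\N_0;X):=\big\{(g_n)_n:(2^{n\sigma}g_n)_n\in\ell^{q}(\N_0;X)\big\},
\]
so that, unravelling the definitions, $\norm{(g_n)_n}_{G^{\sigma}}=\norm{(2^{n\sigma}g_n)_n}_{L_{\vec{p}}(\R^{\mathpzc{d}},\vec{w})[\ell_{q}(\N_0)](X)}$. The point is to consider the coretraction $S\colon f\mapsto(S_n^{\varphi}f)_{n}$ and the retraction $R\colon(g_n)_n\mapsto\sum_{n}S_n^{\psi}g_n$, which crucially do not depend on the smoothness parameter: by the very definition of the Triebel--Lizorkin norm, $S$ maps $F^{\sigma,\vec{a}}_{\vec{p},q}(\R^{\mathpzc{d}},\vec{w};X)$ isometrically into $G^{\sigma}$; by the standard reconstruction estimate (see \cite{Lindemulder2019_IR}, and \cite{Meyries&Veraar2015_pointwise_multiplication} in the isotropic case), valid since $\vec{w}\in\prod_{j}A_{\infty}(\R^{\mathpzc{d}_j})$, $R$ maps $G^{\sigma}$ boundedly into $F^{\sigma,\vec{a}}_{\vec{p},q}(\R^{\mathpzc{d}},\vec{w};X)$; and $RS=\mathrm{id}$ because $\widehat{\psi}_n\widehat{\varphi}_n=\widehat{\varphi}_n$. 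Thus $F^{\sigma,\vec{a}}_{\vec{p},q}(\R^{\mathpzc{d}},\vec{w};X)$ is a retract of $G^{\sigma}$ via maps independent of $\sigma$.

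Next I would apply the standard retraction argument for interpolation functors (see, e.g., \cite{Triebel1983_TFS_I}): since $S$ and $R$ are the same for $\sigma=s_0$ and $\sigma=s_1$, it yields
\[
[F^{s_0,\vec{a}}_{\vec{p},q}(\R^{\mathpzc{d}},\vec{w};X),F^{s_1,\vec{a}}_{\vec{p},q}(\R^{\mathpzc{d}},\vec{w};X)]_{\theta}=R\big([G^{s_0},G^{s_1}]_{\theta}\big),\qquad \norm{f}_{[F^{s_0,\vec{a}}_{\vec{p},q},F^{s_1,\vec{a}}_{\vec{p},q}]_{\theta}}\eqsim\norm{Sf}_{[G^{s_0},G^{s_1}]_{\theta}}.
\]
It then remains to compute $[G^{s_0},G^{s_1}]_{\theta}$. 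Since the outer mixed-norm Bochner space $L_{\vec{p}}(\R^{\mathpzc{d}},\vec{w};\,\cdot\,)$ has finite exponents $\vec{p}\in[1,\infty)^{\ell}$, Calder\'on's theorem on complex interpolation of vector-valued $L_p$-spaces --- applied $\ell$ times to handle the mixed norm, the weights being absorbed into the measures --- gives
\[
[G^{s_0},G^{s_1}]_{\theta}=L_{\vec{p}}\big(\R^{\mathpzc{d}},\vec{w};[\ell^{q}_{s_0}(\N_0;X),\ell^{q}_{s_1}(\N_0;X)]_{\theta}\big),
\]
and the elementary complex interpolation of weighted $\ell^q$-spaces --- valid for all $q\in[1,\infty]$, including $q=\infty$, by reducing to the unit-weight case via a pointwise multiplication isometry --- gives $[\ell^{q}_{s_0}(\N_0;X),\ell^{q}_{s_1}(\N_0;X)]_{\theta}=\ell^{q}_{s}(\N_0;X)$ for $s=(1-\theta)s_0+\theta s_1$. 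Hence $[G^{s_0},G^{s_1}]_{\theta}=G^{s}$, and combining this with the displayed identity and $RS=\mathrm{id}$ once more, $R(G^{s})=R\big(S\big(F^{s,\vec{a}}_{\vec{p},q}(\R^{\mathpzc{d}},\vec{w};X)\big)\big)=F^{s,\vec{a}}_{\vec{p},q}(\R^{\mathpzc{d}},\vec{w};X)$ with $\norm{f}_{[F^{s_0,\vec{a}}_{\vec{p},q},F^{s_1,\vec{a}}_{\vec{p},q}]_{\theta}}\eqsim\norm{Sf}_{G^{s}}=\norm{f}_{F^{s,\vec{a}}_{\vec{p},q}(\R^{\mathpzc{d}},\vec{w};X)}$, which is \eqref{eq:prop:complex_interpolation}.

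The only genuinely non-routine ingredient in this scheme is the boundedness of the reconstruction operator $R\colon G^{\sigma}\to F^{\sigma,\vec{a}}_{\vec{p},q}(\R^{\mathpzc{d}},\vec{w};X)$ in this weighted anisotropic mixed-norm generality; this rests on the Peetre-type maximal function characterisation of these spaces together with an $A_{\infty}$-weighted anisotropic mixed-norm vector-valued maximal inequality, both of which are available from \cite{Lindemulder2019_IR}. Everything else --- the identification of $G^{\sigma}$ as the corresponding weighted $L_{\vec p}(\ell_q)$-space, the abstract retraction theorem, Calder\'on's theorem, and the interpolation of weighted $\ell^q$-spaces --- is standard, so the argument should be short once the retraction is in place. (Note that no UMD assumption on $X$ is needed, consistent with the hypotheses: the entire argument stays on the side of Littlewood--Paley decompositions and maximal inequalities.)
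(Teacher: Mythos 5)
Your retraction/coretraction route is genuinely different from the paper's. The paper instead introduces a Bessel-type lift $B_{\sigma}f=[m_{\sigma}\widehat f]^{\vee}$, shows $B_{\sigma}\colon F^{s_0+\sigma,\vec a}_{\vec p,q}\to F^{s_0,\vec a}_{\vec p,q}$ is an isomorphism, and then invokes the abstract interpolation theorem for admissible scales of tempered distributions from \cite[Theorem~2.3.2]{Amann09}, using that $F^{s_0,\vec a}_{\vec p,q}(\R^{\mz{d}},\vec w;X)$ is $(\mz d,\vec a)$-admissible. Your approach would be an attractive alternative and is the classical one for $F$-scales, but as written it has a genuine gap at the endpoint $q=\infty$.

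The problem is the identity $[\ell^{q}_{s_0}(\N_0;X),\ell^{q}_{s_1}(\N_0;X)]_{\theta}=\ell^{q}_{s}(\N_0;X)$, which you claim ``for all $q\in[1,\infty]$, including $q=\infty$''. That is false for $q=\infty$. By the density theorem for the first complex method (Bergh--L\"ofstr\"om, Theorem 4.2.2), $A_0\cap A_1$ is dense in $[A_0,A_1]_{\theta}$; taking $s_0<s_1$ gives $\ell^{\infty}_{s_0}\cap\ell^{\infty}_{s_1}=\ell^{\infty}_{s_1}$, and the closure of $\ell^{\infty}_{s_1}$ in $\ell^{\infty}_{s}$ is a proper closed subspace (the sequence $g_n=2^{-ns}x$ for nonzero $x\in X$ has distance $\geq\norm{x}_X$ from every element of $\ell^{\infty}_{s_1}$). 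Hence $[\ell^{\infty}_{s_0},\ell^{\infty}_{s_1}]_{\theta}\subsetneq\ell^{\infty}_{s}$, and consequently $[G^{s_0},G^{s_1}]_{\theta}\subsetneq G^{s}$. The retraction argument only yields $f\in[F^{s_0,\vec a}_{\vec p,q},F^{s_1,\vec a}_{\vec p,q}]_{\theta}$ iff $Sf\in[G^{s_0},G^{s_1}]_{\theta}$, so you cannot conclude $F^{s,\vec a}_{\vec p,\infty}\subseteq[F^{s_0,\vec a}_{\vec p,\infty},F^{s_1,\vec a}_{\vec p,\infty}]_{\theta}$ from this. Proving that inclusion would require showing $S(F^{s,\vec a}_{\vec p,\infty})$ actually lands in the (strictly smaller) interpolation space of sequences, which is a separate argument. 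The same caveat is why Triebel's classical complex interpolation result for $F^{s}_{p,q}$ spaces carries restrictions involving $q<\infty$. Also, a small point independent of $q=\infty$: the reduction to unit weights ``via a pointwise multiplication isometry'' does not work as stated for a couple of two different weights, since the two multiplication maps are different; what you actually want is the Stein--Weiss/Calder\'on theorem directly, which is fine for $q<\infty$ but again not for $q=\infty$. For $q<\infty$ your argument is correct and would be a clean alternative proof, modulo the cited boundedness of the reconstruction operator $R$.
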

\begin{proof}
We may without loss of generality assume that $s_0 < s_1$. 
For each $\sigma \in \R$ we define 
$$
m_\sigma(\xi) := \left(\sum_{j=1}^{\ell}(1+|\xi_j|^2)^{1/a_j}\right)^{\sigma/2}, \qquad\qquad \xi \in \R^{d},
$$
and we define $B_{\sigma} \in \mc{L}(\mc{S}'(\R^d;X))$ by
\begin{equation*}
  B_{\sigma}f := \left[m_{\sigma}\widehat{f}\right]^{\vee}, \qquad\qquad f \in \mc{S}'(\R^d;X).
\end{equation*}
As in \cite[Proposition~3.25]{Lindemulder2019_IR} (or the reference \cite[Proposition~5.2.28]{Lindemulder_master-thesis} given there), it can be shown that
$$
B_{\sigma}:F^{s_0+\sigma,\vec{a}}_{\vec{p},q}(\R^{\mathpzc{d}},\vec{w};X) \to F^{s_0,\vec{a}}_{\vec{p},q}(\R^{\mathpzc{d}},\vec{w};X), \qquad \text{as an isomorphism of Banach spaces}.
$$
Since $F^{s_0,\vec{a}}_{\vec{p},q}(\R^{\mathpzc{d}},\vec{w};X)$ is a $(\mz{d},\vec{a})$-admissible Banach space of tempered distributions in the sense of \cite[pg.~66]{Lindemulder2020_JEE} by \cite[Proposition~5.2.26]{Lindemulder_master-thesis}, the complex interpolation identity \eqref{eq:prop:complex_interpolation} can be shown as in \cite[Theorem~2.3.2]{Amann09}.
\end{proof}

\begin{lemma}\label{lemma:incl_into_L1_loc}
Let $X$ be a Banach space, $E$ a quasi-Banach function space on a $\sigma$-finite measure space $(S,\ms{A},\mu)$, $\vec{a} \in (0,\infty)^\ell$, $\vec{p} \in (0,\infty)^\ell$, $q \in (0,\infty]$, $\vec{w} \in \prod_{j=1}^\ell A_\infty(\R^{\mathpzc{d}_j})$ and $s \in (0,\infty)$. Let $\vec{r} \in (0,\infty)^{\ell}$ be such that $r_{j} < p_{1} \wedge \ldots \wedge p_{j} \wedge q$ for $j=1,\ldots,\ell$, $\vec{w} \in \prod_{j=1}^{\ell}A_{p_{j}/r_{j}}(\R^{\mathpzc{d}_{j}})$ and $E^{[\vec{r}_{\max}]}$ is a UMD Banach function space, where $\vec{r}_{\max} = \max\{r_1,\ldots,r_\ell\}$.
Assume that $s>\sum_{j=1}^{\ell}a_j\mathpzc{d}_j(\frac{1}{r_j}-1)_+$. 
Then 
\begin{equation}\label{eq:lemma:incl_into_L1_loc}
\F^{s,\vec{a}}_{\vec{p},q}(\R^{\mathpzc{d}},\vec{w};E;X) \hookrightarrow L_0(S;L_{1,\loc}(\R^{\mz{d}};X)).   
\end{equation}
\end{lemma}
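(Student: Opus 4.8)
The goal is to show that every $f$ in the generalized Triebel–Lizorkin space $\F^{s,\vec a}_{\vec p,q}(\R^{\mz d},\vec w;E;X)$ is, for $\mu$-a.e.\ $\omega\in S$, a locally integrable $X$-valued function on $\R^d$, with the embedding being continuous into $L_0(S;L_{1,\loc}(\R^{\mz d};X))$. The natural route is to reduce to an estimate on the Littlewood--Paley pieces $S_n f$ and then sum. First I would fix a compact set $K\subset\R^d$ and aim to bound $\int_K \|f(x)\|_X\,\ud x$ (as an element of $E$, or at least $\mu$-a.e.\ finiteness) by a constant (depending on $K$, and on all the parameters) times $\|f\|_{\F^{s,\vec a}_{\vec p,q}}$. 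Since $f=\sum_{n\ge 0} S_n f$ in $\mathcal S'(\R^d;X)$ (pointwise in $S$, in the sense of $L_0(S;\mathcal S'(\R^d;X))$), it suffices to control $\sum_{n\ge 0}\int_K \|S_n f(x)\|_X\,\ud x$ and to know the sum converges; convergence in $L_{1,\loc}$ then gives that the $\mathcal S'$-limit is represented by an $L_{1,\loc}$ function.

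The key technical step is a \emph{Nikolskii-type inequality} for the band-limited pieces $S_n f$: because $\widehat{S_n f}$ is supported in the anisotropic annulus $\{|\xi|_{(\vec a,\mz d)}\le 2^{n+1}\}$, one has, for any $\vec r\in(0,\infty)^\ell$ with $r_j\le p_1\wedge\dots\wedge p_j\wedge q$, a pointwise majorization of $\|S_n f\|_X$ by an anisotropic maximal-type function applied to $\|S_n f\|_X$ at "resolution" $2^n$, hence a gain of the form
\begin{equation*}
\Big\|\,\one_K\,\|S_n f\|_X\,\Big\|_{L_1(\R^{\mz d};E)} \lesssim_K 2^{\,n\sum_{j=1}^\ell a_j\mz d_j(\frac{1}{r_j}-1)_+}\,\big\|\,\|S_n f\|_X\,\big\|_{L_{\vec p}(\R^{\mz d},\vec w)[E]}.
\end{equation*}
The appearance of $(\tfrac1{r_j}-1)_+$ is exactly the sub-$1$ mixed exponents; the factor $2^{n(\cdots)}$ is what must be beaten by $2^{-ns}$. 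Here the hypotheses $\vec w\in\prod_j A_{p_j/r_j}$ and $E^{[\vec r_{\max}]}$ UMD are what allow the vector-valued/weighted maximal function to be bounded on the relevant mixed-norm space (via the Fefferman--Stein / Lorist-type maximal inequality), after raising to the power $r_j$ in the $j$-th variable. I would either cite such an estimate from \cite{Lindemulder2019_IR} (it is essentially the Nikolskii inequality underlying the Triebel--Lizorkin theory there, cf.\ the arguments around \cite[Lemma~3.21]{Lindemulder2019_IR} and the maximal-function lemmas used for the independence of the space on $\varphi$) or reprove it by the standard Plancherel--Polya / Peetre maximal function argument adapted to the anisotropic dilation $\delta_\lambda^{(\vec a,\mz d)}$.

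Granting the Nikolskii estimate, the rest is a geometric series: with $\theta:=\sum_{j=1}^\ell a_j\mz d_j(\tfrac1{r_j}-1)_+<s$,
\begin{equation*}
\sum_{n\ge 0}\Big\|\one_K\|S_n f\|_X\Big\|_{L_1(E)}
\lesssim_K \sum_{n\ge 0} 2^{n(\theta-s)}\,2^{ns}\big\|\|S_n f\|_X\big\|_{L_{\vec p}(\vec w)[E]}
\le \Big(\sum_{n\ge 0} 2^{n(\theta-s)q'}\Big)^{1/q'}\,\|f\|_{\F^{s,\vec a}_{\vec p,q}},
\end{equation*}
using Hölder in $n$ with the $\ell_q$-norm hidden in the $\F$-norm (when $q=\infty$ one just uses $\sum_n 2^{n(\theta-s)}<\infty$ directly). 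The sum over $n$ converges since $\theta-s<0$, and after applying the $E$-(quasi-)norm and Fatou we get $\|\one_K\|f\|_X\|_{L_1(\R^{\mz d};E)}<\infty$, hence $\mu$-a.e.\ $\|f(\cdot)\|_X\in L_1(K)$; letting $K$ exhaust $\R^d$ gives $f\in L_0(S;L_{1,\loc}(\R^{\mz d};X))$ with the quantitative bound, i.e.\ \eqref{eq:lemma:incl_into_L1_loc}.

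The main obstacle is the Nikolskii-type inequality in the genuinely anisotropic, mixed-norm, weighted, operator-valued generality: one needs the anisotropic Peetre maximal function $\sup_{y}\|S_n f(x-y)\|_X(1+2^n|y|_{(\vec a,\mz d)})^{-N}$ to be controlled, for each coordinate block separately at the right power $r_j$, by the (weighted, anisotropic) Hardy--Littlewood maximal operator, and then to invoke a Fefferman--Stein vector-valued maximal inequality on $L_{\vec p/\vec r}(\vec w)[E^{[\vec r_{\max}]}][\ell_{q/\vec r_{\max}}]$ — this is precisely the point where $A_{p_j/r_j}$ and the UMD property of $E^{[\vec r_{\max}]}$ enter. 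I expect this to be available from \cite{Lindemulder2019_IR} (the same machinery is used there to prove the Triebel--Lizorkin space is well-defined), so in the write-up I would state it as a cited ingredient and spend most of the proof on the (routine) summation and exhaustion steps above.
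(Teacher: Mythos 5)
Your sketch is correct and reconstructs the mechanism that the paper leaves entirely implicit: the paper's proof of Lemma~\ref{lemma:incl_into_L1_loc} is a one-line citation of \cite[Theorem~3.22]{Lindemulder2019_IR} together with \cite[Examples~3.5~\&~3.20]{Lindemulder2019_IR}, and your Littlewood--Paley decomposition $f=\sum_n S_nf$, anisotropic Nikolskii/Plancherel--Polya gain $2^{n\sum_j a_j\mathpzc{d}_j(1/r_j-1)_+}$ on a compact $K$, and geometric summation against $2^{-ns}$ is precisely what that cited theorem packages. Two small points worth tightening in a written version: for $q<1$ you cannot use H\"older in $n$ as written and should instead invoke $\ell_q\hookrightarrow\ell_1$ directly (still beating $2^{n(\theta-s)}$ since $\theta<s$), and since $E$ is only a quasi-Banach function space the infinite sum in $n$ should be handled via an Aoki--Rolewicz $p$-norm (or, equivalently, by summing after raising to a small power), but neither affects the conclusion.
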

\begin{proof}
The inclusion \eqref{eq:lemma:incl_into_L1_loc} follows from \cite[Theorem~3.22]{Lindemulder2019_IR} (and \cite[Examples~3.5$\&$3.20]{Lindemulder2019_IR}). 
\end{proof}

\begin{theorem}[\cite{Lindemulder2019_IR}]\label{thm:IR}
Let $X$ be a Banach space, $\vec{a} \in (0,\infty)^\ell$, $\vec{p} \in (0,\infty)^\ell$, $q \in (0,\infty]$, $\vec{w} \in \prod_{j=1}^\ell A_\infty(\R^{\mathpzc{d}_j})$ and $s \in (0,\infty)$. Let $\vec{r} \in (0,\infty)^{\ell}$ be such that $r_{j} < p_{1} \wedge \ldots \wedge p_{j} \wedge q$ for $j=1,\ldots,\ell$ and $\vec{w} \in \prod_{j=1}^{\ell}A_{p_{j}/r_{j}}(\R^{\mathpzc{d}_{j}})$.
For each $j \in \{1,\ldots,\ell\}$, let 
$$
\begin{array}{lll}
\check{\mathpzc{d}}_{j}=(\mathpzc{d}_{j+1},\ldots,\mathpzc{d}_{\ell}), & \check{p}_j=(p_{j+1},\ldots,p_\ell), & \check{\vec{w}}_{j} = (w_{j+1},\ldots,w_\ell), \\
\hat{\mathpzc{d}}_{j}=(\mathpzc{d}_{1},\ldots,\mathpzc{d}_{j-1}), & \hat{p}_j=(p_{1},\ldots,p_{j-1}), & \hat{\vec{w}}_{j} = (w_{1},\ldots,w_{j-1}).
\end{array}
$$
If $s>\sum_{j=1}^{\ell}a_j\mathpzc{d}_j(\frac{1}{r_j}-1)_+$, then
\begin{equation}\label{eq:thm:IR}
F^{s,\vec{a}}_{\vec{p},q}(\R^{\mathpzc{d}},\vec{w};X) = \bigcap_{j=1}^{\ell} L_{\check{\vec{p}}_{j}}\Big(\R^{\check{\mathpzc{d}}_{j}},\check{\vec{w}}_{j};\F^{s/a_j}_{p_j,q}\big(\R^{\mathpzc{d}_j},w_j;L_{\hat{\vec{p}}_{j}}(\R^{\hat{\mathpzc{d}}_{j}},\hat{\vec{w}}_{j});X\big)\Big). 
\end{equation}
\end{theorem}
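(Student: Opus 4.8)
The plan is to establish the two inclusions in \eqref{eq:thm:IR} separately, in each case by comparing the anisotropic Littlewood--Paley decomposition of $f$ on $\R^{\mathpzc d}$ with the (essentially isotropic) Littlewood--Paley decomposition in each single coordinate block $\R^{\mathpzc d_j}$. First I would invoke Lemma~\ref{lemma:incl_into_L1_loc}, applied both to $F^{s,\vec a}_{\vec p,q}(\R^{\mathpzc d},\vec w;X)$ and to each generalized Triebel--Lizorkin space on the right of \eqref{eq:thm:IR}: the hypothesis $s>\sum_j a_j\mathpzc{d}_j(\tfrac1{r_j}-1)_+$ guarantees that each of these spaces embeds into the space of $X$-valued locally integrable functions, so that all identifications below may be read off inside $\mathcal S'(\R^d;X)$ and no interpretation issue arises. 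I then fix an anisotropic family $(\varphi_n)_n\in\Phi^{(\vec a,\mathpzc d)}(\R^d)$ and, for each $j$, a Littlewood--Paley family $(\psi^{(j)}_k)_k$ on $\R^{\mathpzc d_j}$ (acting on $f$ by convolution in the $x_j$-variable) that computes the $j$-th norm on the right-hand side of \eqref{eq:thm:IR}.

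For the inclusion ``$\subseteq$'', fix $j$. The geometric point is that $\supp\widehat{\psi^{(j)}_k}\subseteq\{\eta\in\R^{\mathpzc d_j}:|\eta|\eqsim 2^k\}$, so $\psi^{(j)}_k* f$ has frequency support in $\{\xi\in\R^d:|\xi_j|\eqsim 2^k\}$, and $|\xi_j|\eqsim 2^k$ forces $|\xi|_{(\vec a,\mathpzc d)}\gtrsim 2^{k/a_j}$; hence only the blocks $S_n f$ with $n\ge k/a_j-C$ contribute to $\psi^{(j)}_k* f$. I would then split $2^{ks/a_j}=2^{ns}\,2^{(k/a_j-n)s}$, use that the factors $2^{(k/a_j-n)s}$ sum to a bounded quantity over this range because $s>0$, and combine this with a vector-valued discrete Hardy inequality in $\ell_q$, the $\ell_q$-boundedness of the family $(\psi^{(j)}_k*\,\cdot\,)_k$, and Fubini (absorbing the remaining variables, the $\ell_q$-index and $X$ into a UMD coefficient space) in order to bound the $j$-th norm by $\norm{f}_{F^{s,\vec a}_{\vec p,q}(\R^{\mathpzc d},\vec w;X)}$. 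In the quasi-Banach range (some $p_i<1$ or $q<1$) the Fourier-multiplier and square-function steps get replaced by pointwise Peetre maximal-function estimates, which converge precisely because of the lower bound on $s$, followed by the Fefferman--Stein maximal inequality on the $r_j$-convexified spaces; this is where the assumptions $r_j<p_1\wedge\ldots\wedge p_j\wedge q$ and $w_j\in A_{p_j/r_j}$ enter.

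For the inclusion ``$\supseteq$'', which I expect to be the main obstacle, I start from $f$ in the intersection and reconstruct $\norm{f}_{F^{s,\vec a}_{\vec p,q}(\R^{\mathpzc d},\vec w;X)}$. On $\supp\widehat{\varphi_n}$ one has $\max_i|\xi_i|^{1/a_i}\eqsim 2^n$, so there is a smooth partition of unity $1=\sum_{j=1}^{\ell}\Theta_{n,j}$ on that set, with $\Theta_{n,j}$ supported in $\{|\xi_j|\eqsim 2^{na_j}\}$ and built from fixed profiles via the dilation $\delta^{(\vec a,\mathpzc d)}_{2^n}$; exactly as in the homogeneity computation in the proof of Proposition~\ref{prop:H_scaling_s-a}, the rescaled symbols then satisfy anisotropic Mikhlin bounds uniformly in $n$. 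Writing $S_n f=\sum_j[\Theta_{n,j}\widehat{\varphi_n}\widehat f\,]^{\vee}$, and choosing the $\psi^{(j)}$ so that $\widehat{\psi^{(j)}_k}\equiv1$ where needed, the $j$-th summand factors as a uniformly bounded anisotropic multiplier applied to $\sum_{|k-na_j|\le C}\psi^{(j)}_k* f$. Inserting $2^{ns}=2^{ks/a_j}2^{(na_j-k)s/a_j}$ with the last factor bounded, taking $\ell_q$-norms in $n$ (the correspondence $k\leftrightarrow na_j$ has bounded overlap, and one uses the $q$-triangle inequality when $q<1$), and invoking the operator-valued anisotropic Mikhlin theorem --- or its Peetre maximal-function form in the quasi-Banach case --- I bound the $j$-th contribution by $\norm{f}$ of the $j$-th space. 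Summing over $j$ then shows that $\norm{f}_{F^{s,\vec a}_{\vec p,q}(\R^{\mathpzc d},\vec w;X)}$ is dominated by the sum of the $\ell$ norms on the right of \eqref{eq:thm:IR}; the low-frequency term $n=0$ is absorbed by the same argument, since $\widehat{\varphi_0}$ is supported in an anisotropic ball already covered by each $\psi^{(j)}_0$. Together with the ``$\subseteq$'' bounds this gives \eqref{eq:thm:IR} with equivalence of norms.

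The two steps I expect to require the most care are: having at hand a multiplier theorem valid at this level of generality --- weighted, mixed-norm, anisotropic and operator-/function-space-valued --- for which I would rely on \cite[Theorem~7.1]{Lorist2019_pointwise} (respectively on its Peetre maximal-function substitute in the quasi-Banach range); and the bookkeeping attached to the side condition $s>\sum_j a_j\mathpzc{d}_j(\tfrac1{r_j}-1)_+$ and to the non-integer rescaling $k\leftrightarrow na_j$ between the two Littlewood--Paley scales, which is exactly what makes the maximal-function sums converge and the locally-integrable interpretations legitimate. In fact this whole scheme is carried out abstractly for generalized Triebel--Lizorkin spaces in \cite[Theorem~3.22]{Lindemulder2019_IR}, and \eqref{eq:thm:IR} is precisely the instance obtained by combining that result with \cite[Examples~3.5 and~3.20]{Lindemulder2019_IR}.
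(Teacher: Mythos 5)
Your proposal ultimately does what the paper itself does, namely defer to \cite{Lindemulder2019_IR}: the paper's proof is the one-line citation to \cite[Corollary~5.3]{Lindemulder2019_IR} together with \cite[Examples~3.5~\&~3.20]{Lindemulder2019_IR} (cf.\ Example~5.8 there), and your Littlewood--Paley sketch is a reasonable outline of what lies behind that reference. The only slip is bibliographic: you point to \cite[Theorem~3.22]{Lindemulder2019_IR}, which in this paper is the source of the $L_{1,\loc}$-embedding Lemma~\ref{lemma:incl_into_L1_loc} rather than of the intersection representation; the intersection representation itself is \cite[Corollary~5.3]{Lindemulder2019_IR}.
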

\begin{proof}
This follows from
\cite[Corollary~5.3]{Lindemulder2019_IR} (and \cite[Examples~3.5$\&$3.20]{Lindemulder2019_IR}), cf.\ \cite[Example~5.8]{Lindemulder2019_IR}.
\end{proof}

Next we formulate a difference norm characterization for the space $\F^{s}_{p,q}(\R^d,w;E;X)$. As we will only need it for the power weight $w=w_\gamma$ in the one-dimensional setting, we restrict ourselves to that specific case. The advantage of this is that the conditions on the parameters become more explicit.

For a function $f$ on $\R$, $m \in \N$ and $h \in \R$, we define the $m$-th order difference $\Delta^m_hf$ of $f$ in the direction $h$ by 
\begin{equation*}
\Delta^{m}_{h}f(x) = \sum_{k=0}^{m}(-1)^{k}{m \choose k}f(x+kh), \qquad x \in \R.    
\end{equation*}
Furthermore, for $u \in (0,\infty)$ we define the mean
\begin{equation*}
d^{m}_{t,u}f(x,\varsigma)
= \left(\frac{1}{t}\int_{-t}^{t}\norm{\Delta^{m}_{h}f(x,\varsigma)}_{X}^{u}\ud h\right)^{1/u}.  
\end{equation*}

\begin{theorem}[\cite{Lindemulder2019_IR}]\label{thm:difference_norm}
Let $X$ be a Banach space, $E$ a quasi-Banach function space,
$s \in (0,\infty)$, $p \in (0,\infty)$, $q \in (0,\infty]$ and $\gamma \in (-1,\infty)$. Suppose that $E^{[r]}$ is a UMD Banach function space for some $r \in (0,\infty)$ and put
$$
r_E = \sup\Big\{ r \in (0,\infty) : E^{[r]} \text{is a UMD Banach function space} \Big\}.
$$
Suppose that $s > \max\{\frac{1}{q},\frac{1}{p},\frac{1+\gamma}{p},\frac{1}{r_E}\}-1$. Let $u \in (0,\infty)$ and $m \in \N$ be such that $s>\max\{\frac{1}{q},\frac{1}{p},\frac{1+\gamma}{p},\frac{1}{r_E}\}-\frac{1}{u}$ and $m > s$. Then, for all $f \in L_{1,\loc}(\R;E(X))$,
\begin{equation}\label{eq:thm:difference_norm}
\norm{f}_{\F^{s}_{p,q}(\R,w_\gamma;E;X)} \eqsim \norm{f}_{L_p(\R,w_\gamma;E(X))} +
\normB{\left(\int_0^1 [t^{-s}d^{m}_{t,u}f]^q \frac{\mathrm{d}t}{t} \right)^{1/q}}_{L_p(\R,w_\gamma;E(X))}
\end{equation}
\end{theorem}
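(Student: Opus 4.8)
The plan is to derive Theorem~\ref{thm:difference_norm} from the general difference-norm characterisation of the generalised Triebel--Lizorkin scale $\F^{s,\vec{a}}_{\vec{p},q}(\R^{\mz{d}},\vec{w};E;X)$ established in \cite{Lindemulder2019_IR}, specialised to $\ell=1$, $d=1$, $\vec{a}=1$ and $\vec{w}=w_\gamma$. The argument then splits into two parts. First, one records that general statement; in the $A_\infty$-weighted setting it is phrased in terms of an auxiliary ``microscopic'' exponent $r\in(0,\infty)$ subject to $w\in A_{p/r}$, $E^{[r]}$ a UMD Banach function space and $r<p\wedge q$, together with a constraint tying $s$, $1/r$ and the averaging exponent $1/u$, and an order $m>s$. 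Second, one translates these hypotheses into the explicit numerical conditions in \eqref{eq:thm:difference_norm}. One also uses $\F^{s}_{p,q}(\R,w_\gamma;E;X)\hookrightarrow L_0(S;L_{1,\loc}(\R;X))$ so that the right-hand side of \eqref{eq:thm:difference_norm} is meaningful on all of the space; this is precisely Lemma~\ref{lemma:incl_into_L1_loc} under the stated restrictions on $s$.

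For the parameter translation, recall $w_\gamma\in A_\rho$ iff $\gamma\in(-1,\rho-1)$, so $w_\gamma\in A_{p/r}$ holds precisely when $r<p/(1+\gamma)$ together with $r\leq p$; writing $\gamma_+=\max\{\gamma,0\}$ this amounts to $r<p/(1+\gamma_+)$ (for $\gamma\in(-1,0)$ one has $w_\gamma\in A_1$ and only $r\le p$ is binding, for $\gamma\ge 0$ the constraint $r<p/(1+\gamma)$ is binding). Combining this with $r<q$ and $r<r_E$ — the last encoding that $E^{[r]}$ is UMD, by the definition of $r_E$ — the admissible $r$ are exactly those with $r<1/\max\{1/q,(1+\gamma_+)/p,1/r_E\}$. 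Since for $d=1$, $\vec{a}=1$ the relevant threshold reads $s>(1/r-1)_+$, such an $r$ exists iff $s>\max\{1/q,(1+\gamma_+)/p,1/r_E\}-1=\max\{1/q,1/p,(1+\gamma)/p,1/r_E\}-1$ (using $\max\{1/p,(1+\gamma)/p\}=(1+\gamma_+)/p$), which together with $s>0$ is the first hypothesis of the theorem. The same bookkeeping with $1/u$ in place of $1$ produces $s>\max\{1/q,1/p,(1+\gamma)/p,1/r_E\}-1/u$, while $m>s$ carries over verbatim; one finally checks that the averaging over $(-t,t)$ with exponent $u$ in the general statement is literally the quantity $d^{m}_{t,u}f$. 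This is the bulk of the work and is essentially bookkeeping.

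If instead one wants a self-contained proof, the classical two-sided Littlewood--Paley scheme applies. For ``$\lesssim$'' of the right-hand side by $\norm{f}_{\F^{s}_{p,q}}$, write $f=\sum_k S_k^\varphi f$; for $t\simeq 2^{-N}$ and $|h|\le t$, a Taylor/Bernstein estimate gives $\norm{\Delta^m_h S_k^\varphi f}_X\lesssim(2^k t)^m (S_k^*f)_r$ for $k\le N$ and $\norm{\Delta^m_h S_k^\varphi f}_X\lesssim (S_k^*f)_r$ for $k>N$, where $(S_k^*f)_r$ is the Peetre maximal function of order $r$; summing by Hardy's inequality in $\ell_q$ (using $m>s>0$) and then applying the weighted vector-valued Peetre/Fefferman--Stein maximal inequality in $L_p(\R,w_\gamma)[E[\ell_q]](X)$ — valid exactly for the admissible $r$ above — passes from $(S_k^*f)_r$ back to $S_k^\varphi f$, and $\norm{f}_{L_p(\R,w_\gamma;E(X))}$ is dominated by $\norm{f}_{\F^{s}_{p,q}}$ for $s>0$. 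The reverse inequality is the genuinely delicate step: one reconstructs $S_n^\varphi f$ from $m$-th differences by choosing $\psi\in\mathcal{S}(\R)$ with $\widehat\psi$ supported in an annulus and identically $1$ on the rescaled support of $\widehat\varphi_n$, and exploiting that on that annulus the symbol $y\mapsto(e^{iy\xi}-1)^m$ becomes invertible with uniform bounds after averaging against a suitable Schwartz weight supported in a fixed ball, yielding $2^{ns}S_n^\varphi f(x)=2^{ns}\int_{|y|\le c}T_n[\Delta^m_{2^{-n}y}f](x)\,\omega(y)\,\ud y$ for convolution operators $T_n$ with uniformly Schwartz rescaled kernels; hence $\norm{2^{ns}S_n^\varphi f(x)}_X\lesssim 2^{ns}\sup_{|h|\le c2^{-n}}M_r[\norm{\Delta^m_h f}_X](x)$, and replacing the supremum by the $L_u(\ud h/t)$-mean over $|h|\le t=c2^{-n}$ is what forces $s>\max\{\ldots\}-1/u$. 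Taking $\ell_q$ in $n$, applying the maximal inequality and discretising $t$ then gives $\norm{f}_{\F^{s}_{p,q}}\lesssim\text{RHS}$. I expect the \emph{main obstacle} in this second route to be exactly this reconstruction step together with matching the numerical thresholds to the hypotheses of the weighted, $E$-valued Fefferman--Stein inequality, with the case split $\gamma\in(-1,0)$ versus $\gamma\ge 0$ governing the available range of $r$ being the fiddly point.
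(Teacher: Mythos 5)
Your first two paragraphs reproduce the paper's argument: the theorem is deduced from the general difference-norm characterization in \cite{Lindemulder2019_IR} (specialized to $\ell=1$, $d=1$, $w=w_\gamma$), with the parameter conditions obtained by observing that $\max\{1/q,1/p,(1+\gamma)/p,1/r_E\}$ is exactly the infimum of $1/r$ over the admissible microscopic exponents $r\in(0,p\wedge q)$ with $E^{[r]}$ UMD and $w_\gamma\in A_{p/r}$; the paper then adds a one-line remark that the cited result gives the discrete (dyadic) version of \eqref{eq:thm:difference_norm}, from which the integral version follows by a standard monotonicity argument, a step you elide but which is routine. The self-contained Littlewood--Paley/reconstruction scheme in your third paragraph is not what the paper does, but it is offered only as an alternative, so the proposal as a whole matches the paper's proof.
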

\begin{proof}
Note that 
$$
\max\Big\{\frac{1}{q},\frac{1}{p},\frac{1+\gamma}{p},\frac{1}{r_E}\Big\} =
\inf\left\{\frac{1}{r} : r \in (0,p \wedge q), E^{[r]} \text{is a UMD Banach function space}, w_\gamma \in A_{p/r}  \right\}.
$$
The discrete version of \eqref{eq:thm:difference_norm} thus follows from
\cite[Corollary~4.7]{Lindemulder2019_IR} (and \cite[Examples~3.5$\&$3.20]{Lindemulder2019_IR}): 
for all $f \in L_{1,\loc}(\R,E(X))$,
\begin{equation*}
\norm{f}_{\F^{s}_{p,q}(\R,w_\gamma;E;X)} \eqsim \norm{f}_{L_p(\R,w_\gamma;E(X))} +
\normB{\left(\sum_{n \in \N} [2^{ns}d^{m}_{2^{-n},u}f]^q  \right)^{1/q}}_{L_p(\R,w_\gamma;E(X))}.
\end{equation*}
The equivalence \eqref{eq:thm:difference_norm} can be obtained from this by a standard monotonicty argument.
\end{proof}

\section{Pointwise Multiplication on Bessel Potential Spaces}\label{sec:pm;H}

In this subsection we will extend the multiplier property of $\one_{\R^d_+}$ on $H^{s}_{p}(\R^d,w_\gamma,X)$ from \cite[Theorem~1.1]{Meyries&Veraar2015_pointwise_multiplication} to the anisotropic mixed-norm setting. The approach in \cite{Meyries&Veraar2015_pointwise_multiplication} is based on the paraproduct technique as introduced by Bony (see e.g.\ \cite{Bony1981}) and a randomized Littlewood-Paley decomposition. In the context of pointwise multipliers, the use of paraproduct techniques goes back to Peetre \cite{Peetre1976} and Triebel \cite{Triebel1983_TFS_I} in order to treat Besov and Triebel-Lizorkin spaces in the full parameter range $p,q \in (0,\infty]$.

Alternative proofs of \cite[Theorem~1.1]{Meyries&Veraar2015_pointwise_multiplication} were afterwards obtained by the author \cite{Lindemulder2016_JFA} and by the author, Meyries and Veraar \cite{Lindemulder&Meyries&Veraar2017}. 
The latter proof is based on the representation of fractional powers of the negative Laplacian as a singular integral and the Hardy–Hilbert inequality and is much simpler than the proofs in~
\cite{Lindemulder2016_JFA,Meyries&Veraar2015_pointwise_multiplication}.

In order to extend \cite[Theorem~1.1]{Meyries&Veraar2015_pointwise_multiplication} to the anisotropic mixed-norm setting, we could try to extend one of its proofs from \cite{Lindemulder2016_JFA,Lindemulder&Meyries&Veraar2017,Meyries&Veraar2015_pointwise_multiplication}. However, we will not go into this direction. Instead, we will simply bootstrap the result itself to the anisotropic mixed-norm setting through the intersection representation Theorem~\ref{thm:IR;H} and the sum representation Corollary~\ref{cor:thm:IR;H;SR}.

For $i \in \{1,\ldots,\ell\}$ we write 
\begin{equation}\label{eq:half-space;decomp}
\R^\mathpzc{d}_{+,i} = [\R^{\mathpzc{d}_1} \times \cdots \R^{\mathpzc{d}_{i-1}}]  \times \R^{\mathpzc{d}_i}_+ \times [\R^{\mathpzc{d}_{i+1}} \times \cdots \R^{\mathpzc{d}_\ell}]
= \{ x \in \R^{\mathpzc{d}} : x_i \in \R^{\mathpzc{d}_i}_+\}.
\end{equation}

\begin{theorem}\label{thm:pm_H}
Let $X$ be a Banach space, $\vec{a} \in (\frac{1}{\N})^\ell$, $\vec{p} \in (1,\infty)^\ell$, $\vec{w} \in \prod_{j=1}^\ell A_{p_j}(\R^{\mathpzc{d}_j})$ and $s \in \R$. 
Let $i \in \{1,\ldots,\ell\}$ and suppose that $\mathpzc{d}_i=1$ and $w_i=w_\gamma$ for some $\gamma \in (-1,p_i-1)$.
If 
$$
a_i\left(\frac{1+\gamma}{p_i} - 1\right) < s < a_i\frac{1+\gamma}{p_i},
$$
then $\one_{\R^\mathpzc{d}_{+,i}}$ is a pointwise multiplier on the space $H^{s,\vec{a}}_{\vec{p}}(\R^{\mathpzc{d}},\vec{w};X)$ in the sense that
\begin{equation*}
\norm{\one_{\R^\mathpzc{d}_{+,i}}f}_{H^{s,\vec{a}}_{\vec{p}}(\R^{\mathpzc{d}},\vec{w};X)} \lesssim \norm{f}_{H^{s,\vec{a}}_{\vec{p}}(\R^{\mathpzc{d}},\vec{w};X)}, \qquad f \in H^{s,\vec{a}}_{\vec{p}}(\R^{\mathpzc{d}},\vec{w};X) \cap L_{\vec{p}}(\R^{\mathpzc{d}},\vec{w};X).    
\end{equation*}
\end{theorem}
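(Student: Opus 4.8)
The plan is to bootstrap the known isotropic result \cite[Theorem~1.1]{Meyries&Veraar2015_pointwise_multiplication} to the anisotropic mixed-norm setting via the intersection and sum representations established above, thereby avoiding any reengineering of the paraproduct machinery. I would split into three cases according to the sign of $s$.

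\textbf{Case $s > 0$.} Here I would use the intersection representation Theorem~\ref{thm:IR;H}, which writes $H^{s,\vec{a}}_{\vec{p}}(\R^{\mathpzc{d}},\vec{w};X)$ as $\bigcap_{j=1}^{\ell} L_{\check{\vec{p}}_{j}}(\R^{\check{\mathpzc{d}}_{j}},\check{\vec{w}}_{j};H^{s/a_j}_{p_j}(\R^{\mathpzc{d}_j},w_j;L_{\hat{\vec{p}}_{j}}(\R^{\hat{\mathpzc{d}}_{j}},\hat{\vec{w}}_{j};X)))$. The operator $M:=\one_{\R^\mathpzc{d}_{+,i}}\otimes\mathrm{id}$ acts only in the $x_i$-variable. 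For each summand $j$ I must show $M$ is bounded on $L_{\check{\vec{p}}_{j}}(\R^{\check{\mathpzc{d}}_{j}},\check{\vec{w}}_{j};H^{s/a_j}_{p_j}(\R^{\mathpzc{d}_j},w_j;L_{\hat{\vec{p}}_{j}}(\R^{\hat{\mathpzc{d}}_{j}},\hat{\vec{w}}_{j};X)))$. When $j\neq i$, the multiplication happens in a variable ``inside'' an $L_{p_k}$-space in the mixed norm, and since multiplication by a characteristic function of a half-space is a contraction on any $L_{p_k}(\R^{\mathpzc{d}_k},w_k;Y)$ (indeed it is a contraction pointwise), Fubini/iterated-norm arguments give boundedness on the whole summand; the Bessel potential factor $H^{s/a_j}_{p_j}$ lives in a different variable and is unaffected. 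When $j=i$, the factor is $L_{\check{\vec{p}}_i}(\R^{\check{\mathpzc{d}}_i},\check{\vec{w}}_i;H^{s/a_i}_{p_i}(\R,w_\gamma;L_{\hat{\vec{p}}_i}(\R^{\hat{\mathpzc{d}}_i},\hat{\vec{w}}_i;X)))$, and since $\mathpzc{d}_i=1$, the inner space is the one-dimensional weighted vector-valued Bessel potential space $H^{s/a_i}_{p_i}(\R,w_\gamma;Y)$ with $Y=L_{\hat{\vec{p}}_i}(\R^{\hat{\mathpzc{d}}_i},\hat{\vec{w}}_i;X)$. The parameter condition $a_i(\frac{1+\gamma}{p_i}-1)<s<a_i\frac{1+\gamma}{p_i}$ is exactly $\frac{1+\gamma}{p_i}-1<s/a_i<\frac{1+\gamma}{p_i}$, i.e.\ condition \eqref{eq:weight_par-range} with $p=p_i$, $s$ replaced by $s/a_i$. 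However, \cite[Theorem~1.1]{Meyries&Veraar2015_pointwise_multiplication} requires the target Banach space to be UMD, whereas $Y=L_{\hat{\vec{p}}_i}(\R^{\hat{\mathpzc{d}}_i},\hat{\vec{w}}_i;X)$ is UMD only when $X$ is UMD. This is where I need the difference-norm characterization: for $q=p_i$, which is the case relevant here (the Bessel potential space embeds between neighboring Triebel-Lizorkin spaces), or more robustly I would observe that the multiplier result for the \emph{inner} one-dimensional space can be proved for \emph{any} Banach space $X$ by the Triebel-Lizorkin route, using that $H^{s/a_i}_{p_i}(\R,w_\gamma;Y)$ sits between $F^{s_0}_{p_i,1}$ and $F^{s_1}_{p_i,\infty}$ and invoking the (unweighted/weighted) Triebel-Lizorkin pointwise multiplier result without Banach-space restriction. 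Then multiplication by $\one_{\R_+}$ on this inner space, followed by taking the outer $L_{\check{\vec{p}}_i}$-norm, is bounded.

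\textbf{Case $s < 0$.} Here I would dualize. By Corollary~\ref{cor:thm:IR;H;SR} (or by a direct duality argument $[H^{s,\vec{a}}_{\vec{p}}(\R^{\mathpzc{d}},\vec{w};X)]^* = H^{-s,\vec{a}}_{\vec{p}'}(\R^{\mathpzc{d}},\vec{w}'_{\vec{p}};X^*)$, analogous to Proposition~\ref{prop:prelim:duality}), boundedness of $\one_{\R^\mathpzc{d}_{+,i}}$ on $H^{s,\vec{a}}_{\vec{p}}(\R^{\mathpzc{d}},\vec{w};X)$ follows from boundedness on the dual space $H^{-s,\vec{a}}_{\vec{p}'}(\R^{\mathpzc{d}},\vec{w}'_{\vec{p}};X^*)$, since $\one_{\R^\mathpzc{d}_{+,i}}$ is self-adjoint under the pairing. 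Now $-s>0$, and the dual parameters satisfy $\gamma'_{p_i}=-\gamma/(p_i-1)\in(-1,p_i'-1)$ with $a_i(\frac{1+\gamma'_{p_i}}{p_i'}-1)<-s<a_i\frac{1+\gamma'_{p_i}}{p_i'}$ — this is a routine computation verifying that the interval for $s$ transforms correctly under $(p,\gamma,s)\mapsto(p',\gamma'_p,-s)$. Thus the $s>0$ case applies (note $X^*$ need not be UMD, which is why the $s>0$ argument above must work for general Banach spaces).

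\textbf{Case $s = 0$.} Trivial, since $H^{0,\vec{a}}_{\vec{p}}(\R^{\mathpzc{d}},\vec{w};X) = L_{\vec{p}}(\R^{\mathpzc{d}},\vec{w};X)$ and $\one_{\R^\mathpzc{d}_{+,i}}$ is a contraction there. (The condition $a_i(\frac{1+\gamma}{p_i}-1)<0<a_i\frac{1+\gamma}{p_i}$ is automatic since $\gamma\in(-1,p_i-1)$.)

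\textbf{Main obstacle.} The delicate point is the $j=i$ summand in the $s>0$ case: one needs the one-dimensional weighted pointwise multiplier property on $H^{s/a_i}_{p_i}(\R,w_\gamma;Y)$ for a general (non-UMD) Banach space $Y$, since in the dualized situation $Y$ involves $X^*$. The resolution is to pass through Triebel-Lizorkin spaces, where the pointwise multiplier theorem holds without Banach-space restrictions (this is the content of the Besov/Triebel-Lizorkin part of \cite{Meyries&Veraar2015_pointwise_multiplication}, and its weighted-beyond-$A_p$ extension is precisely what Section~\ref{sec:pm;F} of this paper provides), combined with the embeddings $F^{s_1}_{p_i,1}(\R,w_\gamma;Y)\hookrightarrow H^{s}_{p_i}(\R,w_\gamma;Y)\hookrightarrow F^{s_0}_{p_i,\infty}(\R,w_\gamma;Y)$ for $s_0<s<s_1$ and a reiteration/interpolation argument, or more simply by directly invoking Theorem~\ref{thm:pm_F} proved in the next section for the scalar-decomposition case $\ell=1$. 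A secondary bookkeeping obstacle is keeping track of which variables each operator acts on inside the iterated mixed norms; this is handled by repeated use of Fubini's theorem and the fact that $\one_{\R_+}$ acts as a pointwise contraction, so it commutes with all the outer norm-takings harmlessly.
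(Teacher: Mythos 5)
Your high-level plan --- reduce to the one-dimensional isotropic result of \cite[Theorem~1.1]{Meyries&Veraar2015_pointwise_multiplication} via the intersection representation for $s>0$ and the sum representation for $s<0$ --- is exactly what the paper does, and your verification of the parameter range $\frac{1+\gamma}{p_i}-1<s/a_i<\frac{1+\gamma}{p_i}$ is correct. However, a substantial part of your proposal is devoted to a worry and a workaround that are both misplaced.

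The theorem is really for \emph{UMD} Banach spaces $X$; that word appears to have been dropped from the statement, but the UMD hypothesis is forced by the machinery the proof rests on. Theorem~\ref{thm:IR;H} and Corollary~\ref{cor:thm:IR;H;SR} both require $X$ to be UMD, as does \cite[Theorem~1.1]{Meyries&Veraar2015_pointwise_multiplication} itself. Once $X$ is UMD, so is $Y=L_{\hat{\vec{p}}_i}(\R^{\hat{\mathpzc{d}}_i},\hat{\vec{w}}_i;X)$ (by \cite[Proposition~4.2.15]{Hytonen&Neerven&Veraar&Weis2016_Analyis_in_Banach_Spaces_I}), and $X^*$ is UMD too, so the concern that the inner or dual space might fail to be UMD disappears and the reduction to \cite[Theorem~1.1]{Meyries&Veraar2015_pointwise_multiplication} is immediate. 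Moreover, the Triebel--Lizorkin detour you sketch to handle a genuinely non-UMD $Y$ does not actually work: the embeddings $F^{s_1}_{p_i,1}\hookrightarrow H^{s/a_i}_{p_i}\hookrightarrow F^{s_0}_{p_i,\infty}$ combined with boundedness of $\one_{\R_+}$ on the two $F$-spaces only give $\one_{\R_+}\colon F^{s_1}_{p_i,1}\to F^{s_0}_{p_i,\infty}$, which is strictly weaker than boundedness on $H^{s/a_i}_{p_i}$. Upgrading this would require $H^{s/a_i}_{p_i}$ to be an interpolation space between the flanking $F$-spaces, which fails for general $Y$; the Littlewood--Paley identity $H^{s}_{p}=F^{s}_{p,2}$ needs $Y$ to be a Hilbert space, and for non-UMD $Y$ even a Littlewood--Paley description of $H^s_p$ is unavailable. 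This is precisely why the UMD hypothesis cannot be removed here.

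Finally, for $s<0$ you oscillate between invoking Corollary~\ref{cor:thm:IR;H;SR} and mounting a separate duality argument, but these are different routes. The paper simply applies the sum representation in the same way the intersection representation is applied for $s>0$: $\one_{\R^{\mathpzc{d}}_{+,i}}$ is bounded on each summand, with the $j=i$ summand covered by \cite[Theorem~1.1]{Meyries&Veraar2015_pointwise_multiplication} since its parameter range already allows negative $s/a_i$. Your duality argument is valid under the (correct, UMD) hypothesis but needlessly indirect.
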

\begin{proof}
Note that the case $s=0$ is trivial. So we only need to treat the case $s \neq 0$. In the notation of Theorem~\ref{thm:IR;H} and Corollary~\ref{cor:thm:IR;H;SR}, it trivially holds that 
\begin{equation}\label{eq:thm:pm_H;j}
\one_{\R^\mathpzc{d}_{+,i}} \in \mathcal{L}\left(
L_{\check{\vec{p}}_{j}}\Big(\R^{\check{\mathpzc{d}}_{j}},\check{\vec{w}}_{j};H^{s/a_j}_{p_j}\big(\R^{\mathpzc{d}_j},w_j;L_{\hat{\vec{p}}_{j}}(\R^{\hat{\mathpzc{d}}_{j}},\hat{\vec{w}}_{j};X)\big)\Big)\right)    
\end{equation}
for each $j \neq i$, as a pointwise multiplier. By Theorem~\ref{thm:IR;H} or  Corollary~\ref{cor:thm:IR;H;SR}, depending on whether $s>0$ or $s<0$, respectively, it thus suffices that \eqref{eq:thm:pm_H;j} is also valid for $j=i$. Denoting $Y=L_{\hat{\vec{p}}_{i}}(\R^{\hat{\mathpzc{d}}_{i}},\hat{\vec{w}}_{i};X)$, the latter is equivalent to $\one_{\R_+}$ being a pointwise multiplier on $H^{s/a_i}_{p_i}(\R,w_\gamma;Y)$. As the UMD property of $X$ is inherited by $Y$ (see \cite[Proposition~4.2.15]{Hytonen&Neerven&Veraar&Weis2016_Analyis_in_Banach_Spaces_I}) and as $\frac{1+\gamma}{p_i}-1<\frac{s}{a_i}<\frac{1+\gamma}{p_i}$, \cite[Theorem~1.1]{Meyries&Veraar2015_pointwise_multiplication} yields that $\one_{\R_+}$ is indeed a pointwise multiplier on $H^{s/a_i}_{p_i}(\R,w_\gamma;Y)$, as desired.
\end{proof}

\section{Pointwise Multiplication on Triebel-Lizorkin spaces}\label{sec:pm;F}

The following theorem is an analogue of Theorem~\ref{thm:pm_H} for the Triebel-Lizorkin scale, also see Corollary~\ref{cor:thm:pm_F} and Remark~\ref{rmk:cor:thm:pm_F;H} below. In its formulation we use the notation $\R^\mathpzc{d}_{+,i}$ from \eqref{eq:half-space;decomp}.

\begin{theorem}\label{thm:pm_F}
Let $X$ be a Banach space, $\vec{a} \in (0,\infty)^\ell$, $\vec{p} \in (0,\infty)^\ell$, $q \in (0,\infty]$, $\vec{w} \in \prod_{j=1}^\ell A_\infty(\R^{\mathpzc{d}_j})$ and $s \in (0,\infty)$. Let $\vec{r} \in (0,\infty)^{\ell}$ be such that $r_{j} < p_{1} \wedge \ldots \wedge p_{j} \wedge q$ for $j=1,\ldots,\ell$ and $\vec{w} \in \prod_{j=1}^{\ell}A_{p_{j}/r_{j}}(\R^{\mathpzc{d}_{j}})$.
Let $i \in \{1,\ldots,\ell\}$ and suppose that $\mathpzc{d}_i=1$ and $w_i=w_\gamma$ for some $\gamma \in (-1,\infty)$.
If 
\begin{equation}\label{eq:thm:pm_F}
a_i\left( \max\big\{\frac{1}{q},\frac{1}{p_1},\ldots,\frac{1}{p_i},\frac{1+\gamma}{p_i}\big\} - 1\right)_+ \,+\, \sum_{j=1,\ldots,\ell; j \neq i}a_j\mathpzc{d}_j\big(\frac{1}{r_j}-1\big)_+ < s < a_i\frac{1+\gamma}{p_i},    
\end{equation}
then $\one_{\R^\mathpzc{d}_{+,i}}$ is a pointwise multiplier on the space $F^{s,\vec{a}}_{\vec{p},q}(\R^{\mathpzc{d}},\vec{w};X) \subset L_{1,\loc}(\R^d;X)$.
\end{theorem}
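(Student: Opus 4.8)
The strategy mirrors the one used for Theorem~\ref{thm:pm_H}: reduce the anisotropic mixed-norm statement to the one-dimensional scalar-weight case via the intersection representation. First I would apply Theorem~\ref{thm:IR} to write
\[
F^{s,\vec{a}}_{\vec{p},q}(\R^{\mathpzc{d}},\vec{w};X) = \bigcap_{j=1}^{\ell} L_{\check{\vec{p}}_{j}}\Big(\R^{\check{\mathpzc{d}}_{j}},\check{\vec{w}}_{j};\F^{s/a_j}_{p_j,q}\big(\R^{\mathpzc{d}_j},w_j;L_{\hat{\vec{p}}_{j}}(\R^{\hat{\mathpzc{d}}_{j}},\hat{\vec{w}}_{j});X\big)\Big),
\]
which requires $s > \sum_{j=1}^{\ell}a_j\mathpzc{d}_j(\tfrac{1}{r_j}-1)_+$; this is implied by the left inequality in \eqref{eq:thm:pm_F} together with the choice $r_i$ close to $\min\{p_1,\ldots,p_i,q\}$, so that $a_i\mathpzc{d}_i(\tfrac{1}{r_i}-1)_+ \le a_i(\max\{\tfrac1q,\tfrac1{p_1},\ldots,\tfrac1{p_i},\tfrac{1+\gamma}{p_i}\}-1)_+$. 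For each index $j\neq i$, the operator $\one_{\R^{\mathpzc{d}}_{+,i}}$ acts only in the $i$-th block of variables, which is one of the "outer" $L_{\check{\vec{p}}_{j}}$ or "inner" $L_{\hat{\vec{p}}_{j}}$ Lebesgue factors of the $j$-th term; since multiplication by a characteristic function is trivially bounded on any $L_p$ space and on any Bochner space over it, $\one_{\R^{\mathpzc{d}}_{+,i}}$ is a bounded pointwise multiplier on the $j$-th factor of the intersection for every $j\neq i$. Hence it suffices to prove boundedness of $\one_{\R^{\mathpzc{d}}_{+,i}}$ on the $i$-th factor, namely on
\[
L_{\check{\vec{p}}_{i}}\big(\R^{\check{\mathpzc{d}}_{i}},\check{\vec{w}}_{i};\F^{s/a_i}_{p_i,q}(\R,w_\gamma;E;X)\big), \qquad E := L_{\hat{\vec{p}}_{i}}(\R^{\hat{\mathpzc{d}}_{i}},\hat{\vec{w}}_{i}),
\]
and since the operator acts only in the $\R$-variable of the generalized Triebel--Lizorkin space, by Fubini it is enough to show that $\one_{\R_+}$ is a pointwise multiplier on $\F^{s/a_i}_{p_i,q}(\R,w_\gamma;E;X)$.

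The core of the proof is therefore the one-dimensional statement: for $\tilde s := s/a_i$ with
\[
\max\big\{\tfrac1q,\tfrac1{p_i},\tfrac{1+\gamma}{p_i}\big\} - 1 < \tilde s < \tfrac{1+\gamma}{p_i}
\]
(the condition $\max\{\tfrac1q,\tfrac1{p_1},\ldots,\tfrac1{p_i},\tfrac{1+\gamma}{p_i}\}-1 < \tilde s$ follows from the hypothesis after dividing by $a_i$, noting also that the $\tfrac1{p_1},\ldots,\tfrac1{p_{i-1}}$ terms are built into the admissibility of $E$), we must prove $\norm{\one_{\R_+}f}_{\F^{\tilde s}_{p_i,q}(\R,w_\gamma;E;X)} \lesssim \norm{f}_{\F^{\tilde s}_{p_i,q}(\R,w_\gamma;E;X)}$. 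Here I would use the difference-norm characterization from Theorem~\ref{thm:difference_norm}: pick $u\in(0,\infty)$ with $\tilde s > \max\{\tfrac1q,\tfrac1{p_i},\tfrac{1+\gamma}{p_i},\tfrac1{r_E}\}-\tfrac1u$ and an integer $m>\tilde s$, and estimate the differences $\Delta^m_h(\one_{\R_+}f)(x)$ pointwise in $x$ and $h$. One splits into the region where $x$ and $x+mh$ are both on the same side of $0$ (where $\Delta^m_h(\one_{\R_+}f) = \pm\Delta^m_hf$ and nothing is lost) and the "bad" region near $0$, of size $O(|h|)$, where one estimates $\Delta^m_h(\one_{\R_+}f)$ crudely by a finite sum of translates $\|f(x+kh)\|_X$. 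The weighted $L_{p_i}(w_\gamma)$-norm in $x$ of the contribution from the bad region, integrated against $t^{-\tilde s q}\,dt/t$ over $t\in(0,1)$ and then in $\ell_q$, is controlled by a Hardy-type inequality; the precise parameter constraints $\tilde s < \tfrac{1+\gamma}{p_i}$ (so that $|x|^{-\tilde s}$ stays integrable against $w_\gamma$ near $0$ after the gain from the difference structure) and $\tilde s > \tfrac{1+\gamma}{p_i}-1$ together with $\tilde s > \tfrac1q-1$, $\tilde s>\tfrac1{p_i}-1$ are exactly what make the Hardy estimate converge. This is the step I expect to be the main obstacle: carefully bounding the near-boundary contribution, tracking the interplay of the weight exponent $\gamma$, the integrability exponents $p_i, q, u$, and the order $m$ of the differences, and verifying that the Hardy inequality closes precisely under \eqref{eq:thm:pm_F}.

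Once this pointwise-multiplier estimate on $\F^{\tilde s}_{p_i,q}(\R,w_\gamma;E;X)$ is established, I would assemble the pieces: combine it with the trivial bounds on the factors $j\neq i$ via the intersection representation to conclude that $\one_{\R^{\mathpzc{d}}_{+,i}}$ is a pointwise multiplier on $F^{s,\vec{a}}_{\vec{p},q}(\R^{\mathpzc{d}},\vec{w};X)$, and note that the inclusion $F^{s,\vec{a}}_{\vec{p},q}(\R^{\mathpzc{d}},\vec{w};X)\subset L_{1,\loc}(\R^d;X)$ (so that pointwise multiplication makes sense) is Lemma~\ref{lemma:incl_into_L1_loc} applied with $E=\C$, whose hypothesis $s>\sum_j a_j\mathpzc{d}_j(\tfrac1{r_j}-1)_+$ is again guaranteed by \eqref{eq:thm:pm_F}. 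As a remark, one could alternatively, for $q\in[1,\infty)$ and with $X$ reflexive or more structure on $X$, deduce the upper endpoint $\tilde s < \tfrac{1+\gamma}{p_i}$ from the lower endpoint for the dual space via Proposition~\ref{prop:prelim:duality}, but the direct difference-norm argument above handles the full range $q\in(0,\infty]$ uniformly and is the cleaner route here.
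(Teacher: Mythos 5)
Your proposal follows essentially the same route as the paper: reduce to the one-dimensional case via the intersection representation of Theorem~\ref{thm:IR} (after enlarging $r_i$ to absorb the $i$-th term on the left of \eqref{eq:thm:pm_F}), then prove the scalar case $\one_{\R_+}$ on $\F^{s/a_i}_{p_i,q}(\R,w_\gamma;E;X)$ using the difference-norm characterization of Theorem~\ref{thm:difference_norm} together with a near-boundary splitting and a Hardy-type estimate, which is precisely the content of Lemma~\ref{lem:thm:pm_F,1-d}. The only small imprecision is the choice of $r_i$: it must be taken close to $\min\{p_1,\ldots,p_i,q,\tfrac{p_i}{1+\gamma}\}$ (not just $\min\{p_1,\ldots,p_i,q\}$) so that $w_\gamma\in A_{p_i/r_i}$, which is why the $\tfrac{1+\gamma}{p_i}$ term appears in the maximum.
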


\begin{remark}\label{rmk:thm:pm_F}
Note that, if $\vec{p} \in (1,\infty)^\ell$, $q \in [1,\infty]$ and $w_j \in A_{p_j}$ for each $j \neq i$, then each $r_j$ with $j \neq i$ can be taken arbitrarily close to $1$ and the condition \eqref{eq:thm:pm_F} reduces to
\begin{equation}\label{eq:rmk:thm:pm_F}
a_i\left( \frac{1+\gamma}{p_i} - 1\right)_+ < s < a_i\frac{1+\gamma}{p_i}.    
\end{equation}
\end{remark}

\begin{remark}\label{rmk:thm:pm_F;isotrop}
Note that, if $\ell=2$, $\vec{a}=(1,1)$, $\vec{p}=(p,p)$, $i=1$ and $w_2=\one_{\R^{\mathpzc{d}_2}}$, then $r_2$ can be taken arbitrarily close to $q \wedge p \wedge \frac{p}{1+\gamma}$ and condition \eqref{eq:thm:pm_F} reduces to
\begin{equation}\label{eq:rmk:thm:pm_F;unweight}
\left( \max\big\{\frac{1}{q},\frac{1}{p},\frac{1+\gamma}{p}\big\} - 1\right)_+ \,+\, (d-1)\left( \max\big\{\frac{1}{q},\frac{1}{p}\big\} - 1\right)_+  < s < \frac{1+\gamma}{p}. 
\end{equation}
As a consequence, if $s$ satisfies \eqref{eq:rmk:thm:pm_F;unweight}, $\one_{\R^d_+}$ is a pointwise multiplier on $F^s_{p,q}(\R^d,w_\gamma;X) \subset L_{1,\loc}(\R^d;X)$. For $X=\C$, $p \in (0,1)$, $\gamma=0$ this gives back the case $p \in (0,1)$ of \cite[Theorem~2.8.7]{Triebel1983_TFS_I}.
\end{remark}

\begin{remark}\label{rmk:thm:pm_F;B}
The corresponding version of Theorem~\ref{thm:pm_F} (and Corollary~\ref{cor:thm:pm_F} below) for Besov spaces could be derived by means of real interpolation. However, to the best of the author's knowledge, real interpolation results for weighted Triebel-Lizorkin spaces seem to be only present in the literature in the scalar-valued isotropic setting, see~\cite{Bui_Weighted_Beov&Triebel-Lizorkin_spaces:interpolation...,Rychkov2001}. 
\end{remark}

In the following corollary we remove the positive part from the left-hand side of \eqref{eq:rmk:thm:pm_F}, which yields an extension of the $F$-case of \cite[Theorem~1.3]{Meyries&Veraar2015_pointwise_multiplication} to the anisotropic setting with $\gamma$ beyond the $A_p$-range $(-1,p-1)$. 

\begin{cor}\label{cor:thm:pm_F}
Let $X$ be a Banach space, $\vec{a} \in (0,\infty)^\ell$, $\vec{p} \in (1,\infty)^\ell$, $q \in [1,\infty]$ and $s \in \R$. 
Let $i \in \{1,\ldots,\ell\}$ and suppose that $\mathpzc{d}_i=1$ and let $\vec{w}=(w_1,\ldots,w_\ell)$ be a weight vector with $w_i=w_\gamma$ for some $\gamma \in (-1,\infty)$ and $w_j \in A_{p_j}$ for each $j \neq i$.
If 
\begin{equation}\label{eq:cor:thm:pm_F}
a_i\left( \frac{1+\gamma}{p_i} - 1\right) < s < a_i\frac{1+\gamma}{p_i},
\end{equation}
then $\one_{\R^\mathpzc{d}_{+,i}}$ is a pointwise multiplier on the space $F^{s,\vec{a}}_{\vec{p},q}(\R^{\mathpzc{d}},\vec{w};X)$ in the sense that there is the estimate
\begin{equation}\label{eq:cor:thm:pm_F;est}
\norm{\one_{\R^\mathpzc{d}_{+,i}}f}_{F^{s,\vec{a}}_{\vec{p},q}(\R^{\mathpzc{d}},\vec{w};X)}    
\lesssim
\norm{f}_{F^{s,\vec{a}}_{\vec{p},q}(\R^{\mathpzc{d}},\vec{w};X)},
\qquad\qquad f \in \mathcal{S}(\R^d;X).
\end{equation}
\end{cor}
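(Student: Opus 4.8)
The plan is to deduce Corollary~\ref{cor:thm:pm_F} from Theorem~\ref{thm:pm_F} by checking that, under the hypotheses $\vec{p}\in(1,\infty)^\ell$, $q\in[1,\infty]$ and $w_j\in A_{p_j}$ for $j\neq i$, the left-hand side of \eqref{eq:thm:pm_F} can be made strictly smaller than $a_i\big(\frac{1+\gamma}{p_i}-1\big)_+$ (in fact arbitrarily close to it), so that the admissible range \eqref{eq:thm:pm_F} contains every $s$ satisfying \eqref{eq:cor:thm:pm_F}, at least when $s>0$. Concretely, for each $j\neq i$ we may, as in Remark~\ref{rmk:thm:pm_F}, choose $r_j\in(1,p_1\wedge\ldots\wedge p_j\wedge q)$ arbitrarily close to $1$ (using $w_j\in A_{p_j}=A_{p_j/1}$, which persists for $r_j$ slightly above $1$ by openness of the $A_p$-condition), so that $\sum_{j\neq i}a_j\mathpzc{d}_j(\frac1{r_j}-1)_+\to 0$; and likewise the term $a_i\big(\max\{\frac1q,\frac1{p_1},\ldots,\frac1{p_i},\frac{1+\gamma}{p_i}\}-1\big)_+$ equals $a_i\big(\frac{1+\gamma}{p_i}-1\big)_+$ because $p_1,\ldots,p_i>1$ and $q\ge1$ force all the other fractions to be $\le1$. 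Hence for $s>0$ with \eqref{eq:cor:thm:pm_F} we can pick $\vec r$ making \eqref{eq:thm:pm_F} hold, and Theorem~\ref{thm:pm_F} gives the bound \eqref{eq:cor:thm:pm_F;est} (noting that under these hypotheses $F^{s,\vec a}_{\vec p,q}(\R^{\mathpzc{d}},\vec w;X)\subset L_{1,\loc}$, so the pointwise product makes sense).

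The remaining cases are $s\le 0$. For $s=0$ the statement is trivial. For $s<0$ (which is only in the range \eqref{eq:cor:thm:pm_F} when $\frac{1+\gamma}{p_i}<1$, i.e.\ $\gamma<p_i-1$, so in particular $w_i=w_\gamma\in A_{p_i}$ and the whole weight vector lies in $\prod_j A_{p_j}$), the natural route is duality. Using Proposition~\ref{prop:prelim:duality}, $[F^{s,\vec a}_{\vec p,q}(\R^{\mathpzc{d}},\vec w;X)]^*=F^{-s,\vec a}_{\vec p',q'}(\R^{\mathpzc{d}},\vec w'_{\vec p};X^*)$ under the natural pairing, and $\one_{\R^{\mathpzc{d}}_{+,i}}$ is (formally) self-adjoint with respect to this pairing. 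So it suffices to show $\one_{\R^{\mathpzc{d}}_{+,i}}$ is a pointwise multiplier on $F^{-s,\vec a}_{\vec p',q'}(\R^{\mathpzc{d}},\vec w'_{\vec p};X^*)$, where now $-s>0$. One checks that $w'_{\vec p}$ has $i$-th component $(w_\gamma)'_{p_i}=w_{\gamma'_{p_i}}$ with $\gamma'_{p_i}=-\frac{\gamma}{p_i-1}\in(-1,\infty)$, that $w_j'\in A_{p_j'}$ for $j\neq i$, and that the dual exponent range $a_i\big(\frac{1+\gamma'_{p_i}}{p_i'}-1\big)<-s<a_i\frac{1+\gamma'_{p_i}}{p_i'}$ is exactly the translate of \eqref{eq:cor:thm:pm_F} under $(s,\gamma)\mapsto(-s,\gamma'_{p_i})$ — this is a short computation using $\frac{1}{p_i'}=1-\frac1{p_i}$ and $\frac{1+\gamma'_{p_i}}{p_i'}=1-\frac{1+\gamma}{p_i}$. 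Then the already-established case $-s>0$ of the corollary applies to $F^{-s,\vec a}_{\vec p',q'}(\R^{\mathpzc{d}},\vec w'_{\vec p};X^*)$, and the norming inequality \eqref{eq:cor:thm:pm_F;dual;q-finite;norming} (applicable since $q'<\infty$ when $q>1$; the case $q=1$, $q'=\infty$ needs the plain duality \eqref{eq:cor:thm:pm_F;dual;q-finite} instead) transfers the estimate back to $F^{s,\vec a}_{\vec p,q}$ on Schwartz functions.

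The main obstacle is the bookkeeping in the duality step: one must verify that the pairing $\ip{\one_{\R^{\mathpzc{d}}_{+,i}}f}{g}=\ip{f}{\one_{\R^{\mathpzc{d}}_{+,i}}g}$ is legitimate for $f\in\mathcal S(\R^d;X)$ and $g$ in (a dense subset of) the dual, which requires knowing both $\one_{\R^{\mathpzc{d}}_{+,i}}f\in L_{1,\loc}$ and $\one_{\R^{\mathpzc{d}}_{+,i}}g\in L_{1,\loc}$ so that the products pair against each other by an honest integral — this is where Lemma~\ref{lemma:incl_into_L1_loc} (via the $L_{1,\loc}$-embedding for $F$-spaces) and the strictness of the inequalities in \eqref{eq:cor:thm:pm_F} come in. A clean way to organize this is: establish \eqref{eq:cor:thm:pm_F;est} first for $f\in\mathcal S(\R^d;X)$ in the case $s>0$; then for $s<0$ write, for $f,g\in\mathcal S$, $\ip{\one_{\R^{\mathpzc{d}}_{+,i}}f}{g}=\int_{\R^d}\one_{\R^{\mathpzc{d}}_{+,i}}(x)\,\ip{f(x)}{g(x)}\,\mathrm dx=\ip{f}{\one_{\R^{\mathpzc{d}}_{+,i}}g}$ (a genuine integral identity, no distribution theory needed since both $f,g$ are Schwartz), bound $|\ip{\one_{\R^{\mathpzc{d}}_{+,i}}f}{g}|\le \norm{f}_{F^{s,\vec a}_{\vec p,q}}\,\norm{\one_{\R^{\mathpzc{d}}_{+,i}}g}_{F^{-s,\vec a}_{\vec p',q'}(\vec w'_{\vec p};X^*)}\lesssim \norm{f}_{F^{s,\vec a}_{\vec p,q}}\,\norm{g}_{F^{-s,\vec a}_{\vec p',q'}(\vec w'_{\vec p};X^*)}$ using the $s>0$ case on the dual space, and finally take the supremum over $g\in\mathcal S(\R^d;X^*)$ with $\norm{g}_{F^{-s,\vec a}_{\vec p',q'}(\vec w'_{\vec p};X^*)}\le1$, invoking \eqref{eq:cor:thm:pm_F;dual;q-finite;norming} to recover $\norm{\one_{\R^{\mathpzc{d}}_{+,i}}f}_{F^{s,\vec a}_{\vec p,q}(\R^{\mathpzc{d}},\vec w;X)}$ (and for $q=1$ using that $\mathcal S(\R^d;X^*)$ is norming for $F^{s,\vec a}_{\vec p,1}(\R^{\mathpzc d},\vec w;X)$ via \eqref{eq:cor:thm:pm_F;dual;q-finite}).
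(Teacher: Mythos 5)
Your overall strategy coincides with the paper's: reduce to Theorem~\ref{thm:pm_F} (via Remark~\ref{rmk:thm:pm_F}) for $s>0$, then pass to $s<0$ by duality through Proposition~\ref{prop:prelim:duality}. The positive-smoothness step and the verification of the dual parameter range (using $\gamma'_{p_i}=-\gamma/(p_i-1)$ and $\frac{1+\gamma'_{p_i}}{p_i'}=1-\frac{1+\gamma}{p_i}$) are correct. However, there are two genuine gaps.

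First, your claim that the case $s=0$ is trivial is false. For $q\neq 2$ the space $F^{0,\vec a}_{\vec p,q}(\R^{\mathpzc{d}},\vec w;X)$ is \emph{not} an $L_{\vec p}$-type space, and boundedness of multiplication by $\one_{\R^{\mathpzc{d}}_{+,i}}$ on it is not automatic. The paper handles $s=0$ by complex interpolation (Proposition~\ref{prop:complex_interpolation}) between an instance with $s_1>0$ covered by Theorem~\ref{thm:pm_F} and an instance with $s_0<0$ supplied by the duality argument; this is precisely why its proof opens with ``by Theorem~\ref{thm:pm_F} and complex interpolation it suffices to consider $s<0$.'' Without such an interpolation step (or an alternative argument) the endpoint $s=0$ remains uncovered.

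Second, the duality step for $s<0$ does not cover $q=\infty$. You write that the norming inequality \eqref{eq:cor:thm:pm_F;dual;q-finite;norming} is ``applicable since $q'<\infty$ when $q>1$,'' but the hypothesis of Proposition~\ref{prop:prelim:duality} is $q\in[1,\infty)$, i.e., the constraint is on $q$, not on $q'$. (Your worry about $q=1$ is therefore misplaced: $q=1<\infty$ is within the range of Proposition~\ref{prop:prelim:duality}, so the norming inequality applies there with $q'=\infty$.) For $q=\infty$ --- where your stated criterion $q'=1<\infty$ would suggest everything is fine --- the norming inequality is simply not available: $\mathcal S(\R^d;X^*)$ need not be norming for $F^{s,\vec a}_{\vec p,\infty}$, and the identity $[F^{s,\vec a}_{\vec p,\infty}]^*=F^{-s,\vec a}_{\vec p',1}$ is neither claimed nor true in general. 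The paper deals with $q=\infty$ separately: it uses $F^{s,\vec a}_{\vec p,\infty}(\R^{\mathpzc d},\vec w;X^{**})=[F^{-s,\vec a}_{\vec p',1}(\R^{\mathpzc d},\vec w'_{\vec p};X^*)]^*$, takes the adjoint $T^*$ of the multiplication operator on the predual (identified on Schwartz functions as multiplication by $\one_{\R^{\mathpzc d}_{+,i}}$ via the same integral computation you perform), and finally descends to $X$-valued elements through $F^{s,\vec a}_{\vec p,\infty}(\R^{\mathpzc d},\vec w;X)=F^{s,\vec a}_{\vec p,\infty}(\R^{\mathpzc d},\vec w;X^{**})\cap\mathcal S'(\R^d;X)$ with equality of norms. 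You would need this bidual argument, or something comparable, to finish the $q=\infty$ case.
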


\begin{remark}\label{rmk:thm:pm_F;interpolation}
In the special case that $X$ is a UMD Banach space, $\vec{a} \in (\frac{1}{\N})^\ell$, $\vec{p} \in (1,\infty)^\ell$, $q \in (1,\infty)$, $w_j \in A_{p_j}$ for each $j \neq i$ and $\g \in (-1,p_i-1)$, the statement of Corollary~\ref{cor:thm:pm_F} can be obtained from Theorem~\ref{thm:pm_H} by means of $\ell^q$-interpolation, see \cite{LL20}.
\end{remark}

\begin{remark}\label{rmk:cor:thm:pm_F;H}
Theorem~\ref{thm:pm_F} and Corollary~\ref{cor:thm:pm_F} have corresponding versions for $\F^{s,\vec{a}}_{\vec{p},q}(\R^{\mathpzc{d}},\vec{w};E;X)$ in place of $F^{s,\vec{a}}_{\vec{p},q}(\R^{\mathpzc{d}},\vec{w};X)$ with the same proof, where $E$ is a Banach function space as in Lemma~\ref{lem:thm:pm_F,1-d} below. For this we need to mention that the intersection representation from Theorem~\ref{thm:IR}, which is one of the main ingredients in the proof, has a  corresponding version as well (see the references provided for Theorem~\ref{thm:IR}).

In view of the identity
$H^{s,\vec{a}}_{\vec{p}}(\R^{\mathpzc{d}},\vec{w};E(H)) = \F^{s,\vec{a}}_{\vec{p},2}(\R^{\mathpzc{d}},\vec{w};E;H)$ for $E$ a UMD Banach function space and $H$ a Hilbert space (see \cite{LL20}) this provides an alternative approach to Theorem~\ref{thm:pm_H} when $X$ is of the special form $X=E(H)$ or, more generally, when $X$ is isomorphic to a closed subspace of such a space $E(H)$. For many applications this restriction is actually no problem.
\end{remark}

\begin{proof}[Proof of Corollary~\ref{cor:thm:pm_F}]
If $\gamma \geq p_i-1$, then the conditions \eqref{eq:rmk:thm:pm_F} and \eqref{eq:cor:thm:pm_F} coincide. This case thus follows directly from Theorem~\ref{thm:pm_F} (through Remark~\ref{rmk:thm:pm_F}). So we may assume that $\gamma \in (-1,p_i-1)$, which just means that $w_\gamma \in A_{p_i}$. By Theorem~\ref{thm:pm_F} (and Remark~\ref{rmk:thm:pm_F}) and complex interpolation (see Proposition~\ref{prop:complex_interpolation}), it furthermore suffices to consider the case $s<0$.

Pick $\sigma \in (0,a_i\frac{1+\gamma}{p_i})$. Then $\one_{\R^\mathpzc{d}_{+,i}}$ is a pointwise multiplier on $F^{\sigma,\vec{a}}_{\vec{p},q}(\R^{\mathpzc{d}},\vec{w};X) \subset L_{1,\loc}(\R;X)$ by Theorem~\ref{thm:pm_F} (and Remark~\ref{rmk:thm:pm_F}). 
As $\mathcal{S}(\R^d;X) \subset F^{\sigma,\vec{a}}_{\vec{p},q}(\R^{\mathpzc{d}},\vec{w};X) \subset F^{s,\vec{a}}_{\vec{p},q}(\R^{\mathpzc{d}},\vec{w};X)$, we find that
\begin{equation}\label{eq:cor:thm:pm_F;est;comp_supp;contained}
\one_{\R^\mathpzc{d}_{+,i}}f \in F^{s,\vec{a}}_{\vec{p},q}(\R^{\mathpzc{d}},\vec{w};X),
\qquad\qquad f \in \mathcal{S}(\R^d;X).
\end{equation}

Let us first consider the case $q<\infty$. 
In this case we can apply Proposition~\ref{prop:prelim:duality} to obtain \eqref{eq:cor:thm:pm_F;dual;q-finite} and \eqref{eq:cor:thm:pm_F;dual;q-finite;norming}.
As $w_j' \in A_{p'_j}$ for each $j \neq i$, $w'_i=w_{\gamma'_p}$ with $\gamma'_p = -\frac{\gamma}{p_i-1} \in (-1,p_i'-1)$ and 
$$
0< -s < -a_i\left( \frac{1+\gamma}{p_i} - 1\right) = a_i\frac{1+\gamma'_p}{p'_i},
$$
by Theorem~\ref{thm:pm_F} (and Remark~\ref{rmk:thm:pm_F}) we obtain that $\one_{\R^\mathpzc{d}_{+,i}}$ is a pointwise multiplier on the function space $F^{-s,\vec{a}}_{\vec{p}',q'}(\R^{\mathpzc{d}},\vec{w}'_{\vec{p}};X^{*}) \subset L_{1,\loc}(\R^d;X^*)$, where the notation is as in Proposition~\ref{prop:prelim:duality}.

Now, in order to derive \eqref{eq:cor:thm:pm_F;est} by a duality argument, fix $f \in \mathcal{S}(\R^d;X)$. Then, 
by~\eqref{eq:cor:thm:pm_F;est;comp_supp;contained}, $\one_{\R^\mathpzc{d}_{+,i}}f \in F^{s,\vec{a}}_{\vec{p},q}(\R^{\mathpzc{d}},\vec{w};X)$.
Therefore, in view of \eqref{eq:cor:thm:pm_F;dual;q-finite;norming}, there exists $g \in \mathcal{S}(\R^d;X)$ with $\norm{g}_{F^{-s,\vec{a}}_{\vec{p}',q'}(\R^{\mathpzc{d}},\vec{w}'_{\vec{p}};X^{*})} \leq 1$ such that 
\begin{equation*}\label{eq:cor:thm:pm_F;est;norming;q_finite}
\norm{\one_{\R^\mathpzc{d}_{+,i}}f}_{F^{s,\vec{a}}_{\vec{p},q}(\R^{\mathpzc{d}},\vec{w};X)} \lesssim |\ip{\one_{\R^\mathpzc{d}_{+,i}}f}{g}|,  
\end{equation*}
where 
\begin{equation}\label{eq:cor:thm:pm_F;comp_adjoint}
\begin{aligned}
\ip{\one_{\R^\mathpzc{d}_{+,i}}f}{g} &\stackrel{\eqref{eq:cor:thm:pm_F;dual;q-finite;norming;pairing}}{=} 
\ip{\one_{\R^\mathpzc{d}_{+,i}}f}{g}_{\ip{\mathcal{S}'(\R^d;X)}{\mathcal{S}(\R^d;X^*)}} =
\int_{\R^d}\ip{\one_{\R^\mathpzc{d}_{+,i}}f(x)}{g(x)}_{\ip{X}{X^*}}\mathrm{d}x \\
&= \int_{\R^d}\ip{f(x)}{\one_{\R^\mathpzc{d}_{+,i}}g(x)}_{\ip{X}{X^*}}\mathrm{d}x 
= \ip{f}{\one_{\R^\mathpzc{d}_{+,i}}g}_{\ip{\mathcal{S}(\R^d;X)}{\mathcal{S}'(\R^d;X^*)}} \\
&= \ip{f}{\one_{\R^\mathpzc{d}_{+,i}}g}_{\ip{F^{s,\vec{a}}_{\vec{p},q}(\R^{\mathpzc{d}},\vec{w};X)}{F^{-s,\vec{a}}_{\vec{p}',q'}(\R^{\mathpzc{d}},\vec{w}'_{\vec{p}};X^{*})}}.
\end{aligned}
\end{equation}
Therefore,
$$
\norm{\one_{\R^\mathpzc{d}_{+,i}}f}_{F^{s,\vec{a}}_{\vec{p},q}(\R^{\mathpzc{d}},\vec{w};X)} \lesssim \norm{f}_{F^{s,\vec{a}}_{\vec{p},q}(\R^{\mathpzc{d}},\vec{w};X)}\norm{\one_{\R^\mathpzc{d}_{+,i}}g}_{F^{-s,\vec{a}}_{\vec{p}',q'}(\R^{\mathpzc{d}},\vec{w}'_{\vec{p}};X^{*})} \lesssim \norm{f}_{F^{s,\vec{a}}_{\vec{p},q}(\R^{\mathpzc{d}},\vec{w};X)}.
$$

Let us next consider the case $q=\infty$. By Proposition~\ref{prop:prelim:duality},
\begin{equation*}
F^{s,\vec{a}}_{\vec{p},\infty}(\R^{\mathpzc{d}},\vec{w};X^{**}) = [F^{-s,\vec{a}}_{\vec{p}',1}(\R^{\mathpzc{d}},\vec{w}'_{\vec{p}};X^*)]^*
\end{equation*}
under the natural pairing (induced by $\mathcal{S}'(\R^d;X^{**}) = [\mathcal{S}(\R^d;X^*)]'$). Similarly as in the case $q<\infty$, we have that $\one_{\R^\mathpzc{d}_{+,i}}$ is a pointwise multiplier on the function space $F^{-s,\vec{a}}_{\vec{p}',q'}(\R^{\mathpzc{d}},\vec{w}'_{\vec{p}};X^{*}) \subset L_{1,\loc}(\R^d;X^*)$.
Denoting this multiplication operator by $T$, its adjoint operator $T^* \in \mathcal{L}( F^{s,\vec{a}}_{\vec{p},\infty}(\R^{\mathpzc{d}},\vec{w};X^{**}))$ is given by $T^*f=\one_{\R^\mathpzc{d}_{+,i}}f$ for $f \in \mathcal{S}(\R^d;X^{**})$, which can be seen similarly to the computation \eqref{eq:cor:thm:pm_F;comp_adjoint}.
Since $\one_{\R^\mathpzc{d}_{+,i}}f \in \mathcal{S}'(\R^d;X)$ for $f \in \mathcal{S}(\R^d;X)$ and since $F^{s,\vec{a}}_{\vec{p},\infty}(\R^{\mathpzc{d}},\vec{w};X) = F^{s,\vec{a}}_{\vec{p},\infty}(\R^{\mathpzc{d}},\vec{w};X^{**}) \cap \mathcal{S}'(\R^{d};X)$ with an equality of norms, it follows that \eqref{eq:cor:thm:pm_F;est} holds true.
\end{proof}

We will use the intersection representation from Theorem~\ref{thm:IR} to reduce Theorem~\ref{thm:pm_F} to the one-dimensional setting. The following lemma treats the corresponding one-dimensional case.

\begin{lemma}\label{lem:thm:pm_F,1-d}
Let $X$ be a Banach space, $E$ a quasi-Banach function space, $p \in (0,\infty)$, $q \in (0,\infty]$, $\gamma \in (-1,\infty)$ and $s \in (0,\infty)$.
Suppose that $E^{[r]}$ is a UMD Banach function space for some $r \in (0,\infty)$ and put
\begin{equation}\label{eq:lem:thm:pm_F,1-d;E}
r_E = \sup\Big\{ r \in (0,\infty) : E^{[r]} \text{is a UMD Banach function space} \Big\}.    
\end{equation}
If $\max\{\frac{1}{q},\frac{1}{p},\frac{1+\gamma}{p},\frac{1}{r_E}\} - 1 < s < \frac{1+\gamma}{p}$, then $\one_{\R_+}$ is a pointwise multiplier on the space
$\F^{s}_{p,q}(\R,w_{\gamma};E;X) \subset L_{1,\loc}(\R;E(X))$
.
\end{lemma}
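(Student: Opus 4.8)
The plan is to reduce everything to the difference norm characterization of Theorem~\ref{thm:difference_norm}. Since $s>\max\{\tfrac1q,\tfrac1p,\tfrac{1+\gamma}{p},\tfrac1{r_E}\}-1$, we may take $u=1$ and fix an integer $m>s$, so that \eqref{eq:thm:difference_norm} is available for every $g\in L_{1,\loc}(\R;E(X))$. By Lemma~\ref{lemma:incl_into_L1_loc}, whose hypothesis on $s$ reduces precisely to the present lower bound, one has $\F^{s}_{p,q}(\R,w_{\gamma};E;X)\hookrightarrow L_0(S;L_{1,\loc}(\R;X))$, so $\one_{\R_+}f$ is again a well-defined element of $L_{1,\loc}(\R;E(X))$ for $f$ in this space; since moreover $\norm{\one_{\R_+}f}_{L_p(\R,w_\gamma;E(X))}\le\norm{f}_{L_p(\R,w_\gamma;E(X))}$, it suffices to bound the difference-norm part of $\norm{\one_{\R_+}f}_{\F^{s}_{p,q}(\R,w_{\gamma};E;X)}$ by $\norm{f}_{\F^{s}_{p,q}(\R,w_{\gamma};E;X)}$.

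The first step is a pointwise estimate on the $m$-th order difference. If $x>m|h|$ then $x,x+h,\dots,x+mh$ all lie in $\R_+$ and $\Delta^m_h(\one_{\R_+}f)(x)=\Delta^m_hf(x)$; if $x<-m|h|$ then they all lie in $(-\infty,0)$ and $\Delta^m_h(\one_{\R_+}f)(x)=0$; and in the remaining boundary range $|x|\le m|h|$ one uses, for $x>0$, the identity $\Delta^m_h(\one_{\R_+}f)(x)=\Delta^m_hf(x)-\sum_{k:\,x+kh\le0}(-1)^k\binom mk f(x+kh)$, and for $x\le0$ the direct expression $\Delta^m_h(\one_{\R_+}f)(x)=\sum_{k:\,x+kh>0}(-1)^k\binom mk f(x+kh)$, observing that in both cases the remaining terms involve only values of $f$ at points of modulus $\le m|h|$. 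Averaging the resulting (finitely many) integrals over $h\in(-t,t)$ and substituting $y=x+kh$ in each of them gives, with a constant depending only on $m$,
\[
d^m_{t,1}(\one_{\R_+}f)(x,\varsigma)\ \lesssim\ d^m_{t,1}f(x,\varsigma)\ +\ \one_{\{|x|\le mt\}}\,\frac1t\int_{|y|\le mt}\norm{f(y,\varsigma)}_X\,\ud y .
\]
Plugging this into \eqref{eq:thm:difference_norm} for $g=\one_{\R_+}f$ and using quasi-subadditivity of the $L_q(\tfrac{\ud t}{t})$- and $L_p(\R,w_\gamma;E(X))$-quasinorms splits the difference-norm part of $\norm{\one_{\R_+}f}_{\F^{s}_{p,q}}$ into two contributions. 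The first is dominated by $\normB{(\int_0^1[t^{-s}d^m_{t,1}f]^q\tfrac{\ud t}{t})^{1/q}}_{L_p(\R,w_\gamma;E(X))}$, which by Theorem~\ref{thm:difference_norm} applied to $f$ is $\lesssim\norm{f}_{\F^{s}_{p,q}(\R,w_{\gamma};E;X)}$. The second one is
\[
\mathcal B:=\normB{\Big(\int_0^1\one_{\{|x|\le mt\}}\,t^{-sq}\Big(\frac1t\int_{|y|\le mt}\norm{f(y,\cdot)}_X\,\ud y\Big)^{q}\,\frac{\ud t}{t}\Big)^{1/q}}_{L_p(\R,w_\gamma;E)} .
\]

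Here the inner bracket does not depend on $x$, the condition $\one_{\{|x|\le mt\}}$ amounts to $t\ge|x|/m$, and $w_\gamma(x)=|x|^\gamma$; so after the scaling $x\mapsto mx$ and the estimate $\bigl\|\tfrac1t\int_{|y|\le mt}\norm{f(y,\cdot)}_X\ud y\bigr\|_E\le\tfrac1t\int_{|y|\le mt}\norm{f(y)}_{E(X)}\ud y$ (Minkowski's inequality in $E$), a weighted Hardy inequality in the $t$-variable (with values in the convexification $E^{[q]}$, using $\gamma>-1$) bounds $\mathcal B$ by
\[
\Big(\int_0^1 t^{-sp+\gamma-p}\Big(\int_{|y|\le mt}\norm{f(y)}_{E(X)}\,\ud y\Big)^{p}\,\ud t\Big)^{1/p}\ \lesssim\ \norm{f}_{L_p(\R,w_{\gamma-sp};E(X))},
\]
the last step being once more a weighted Hardy inequality, now in $y$, whose admissibility requires convergence of the $t$-integral at $t=0$, i.e.\ $\gamma-sp>-1$, equivalently $s<\tfrac{1+\gamma}{p}$ (this is exactly where the upper bound on $s$ enters), while its Hardy-exponent condition gives back $\tfrac{1+\gamma}{p}-1<s$. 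Finally one invokes the Sobolev-type embedding $\F^{s}_{p,q}(\R,w_{\gamma};E;X)\hookrightarrow L_p(\R,w_{\gamma-sp};E(X))$, valid precisely for $0<s<\tfrac{1+\gamma}{p}$, which may be isolated as a separate lemma and proved from Theorem~\ref{thm:difference_norm} by the same kind of Hardy estimate (or taken from the literature on weighted embeddings). Combining the two contributions then yields $\norm{\one_{\R_+}f}_{\F^{s}_{p,q}(\R,w_{\gamma};E;X)}\lesssim\norm{f}_{\F^{s}_{p,q}(\R,w_{\gamma};E;X)}$.

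The main obstacle is not the splitting near the boundary, which is elementary, but making the chain of weighted Hardy inequalities rigorous while carrying the Banach function space norm $E$ through them over the full parameter range $p,q\in(0,\infty]$: when $q<1$ the convexification $E^{[q]}$ is only quasi-normed and one has to argue through the UMD convexification $E^{[r]}$ as in the proof of Theorem~\ref{thm:difference_norm}, and when $p<q$ the relevant Hardy inequalities must be used in their quasi-normed form, which still holds for the tail and averaging operators occurring here; one also has to verify that all the convergence conditions appearing along the way collapse exactly to the stated constraints $\max\{\tfrac1q,\tfrac1p,\tfrac{1+\gamma}{p},\tfrac1{r_E}\}-1<s<\tfrac{1+\gamma}{p}$.
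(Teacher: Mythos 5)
Your overall strategy (pointwise split of $\Delta^m_h(\one_{\R_+}f)$ into an interior part and a boundary part, then the difference norm Theorem~\ref{thm:difference_norm} plus a Hardy-type bound for the boundary contribution, finishing with the Sobolev embedding into $L_p(\R,w_{\gamma-sp};E(X))$) matches the paper's at a high level. However, two of your choices create genuine gaps that the paper's proof is specifically designed to avoid.

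First, the choice $u=1$. Although $u=1$ is formally admissible as an input to Theorem~\ref{thm:difference_norm} (since $s>\max\{\ldots\}-1$), the subsequent steps in your argument—Minkowski's integral inequality in $E$ and boundedness of a Hardy or averaging operator on $L_p(\R,w)[E]$—require $E$ itself (that is, $E^{[1]}$) to be a genuine Banach function lattice with the UMD-type maximal inequality available. When $r_E<1$ (which is permitted, and is exactly the case needed in the application, $E=L_{\hat{\vec p}_i}(\ldots)$ with some $p_j<1$), $E$ is only quasi-normed, Minkowski fails, and there is no boundedness of the relevant averaging operator on $L_p[E]$. The paper circumvents this by taking $u$ strictly below $\min\{q,p,p/(1+\gamma),r_E\}$, so that $E^{[u]}$ is a UMD Banach function space, and then it is the $u$-maximal operator $M_u$ whose boundedness on $L_p(\R,w_{-sp+\gamma})[E]$ is invoked.

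Second, and more structurally, your pointwise bound on $d^m_{t,1}(\one_{\R_+}f)(x,\varsigma)$ has a boundary term $\one_{\{|x|\le mt\}}\tfrac1t\int_{|y|\le mt}\norm{f(y,\varsigma)}_X\ud y$ that depends jointly on $t$ and $\varsigma$ and does not factor as (a scalar function of $t,x$)\,$\times$\,(a function of $x,\varsigma$). Consequently, to pass from $\mathcal{B}$ to the expression in which the $E$-norm sits inside the $\bigl(\int_0^1[\cdot]^q\,\tfrac{\ud t}{t}\bigr)^{1/q}$-bracket, you need the interchange $\normb{(\int_0^1 B(t,\cdot)^q \tfrac{\ud t}{t})^{1/q}}_E\lesssim(\int_0^1\norm{B(t,\cdot)}_E^q\tfrac{\ud t}{t})^{1/q}$, which is precisely $q$-convexity of $E$. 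This is not assumed (and is false for $E=L_\rho$ with $\rho<q$). The remark about "the convexification $E^{[q]}$" in your last paragraph does not repair this, because the convexification in the hypothesis is governed by $r_E$, not by $q$, and the two have no a priori relation. The paper's proof sidesteps the interchange entirely: it bounds the boundary terms by $M_u\norm{f}_X$ evaluated at the $t$-independent points $\tfrac{m-j}{m}x$, so the boundary contribution is of the separated form $1_{\{t>x/m\}}t^{-1/u}(t-x/m)^{1/u}\cdot G(x,\varsigma)$, the $t$-integral is a purely scalar computation producing a factor $|x|^{-s}$, and only then is the $E$-norm taken of a fixed function of $(x,\varsigma)$. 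Note that your own boundary term admits such a separable upper bound—for $|x|\le mt$ one has $\tfrac1t\int_{|y|\le mt}\norm{f(y,\varsigma)}_X\ud y\lesssim M\bigl(\norm{f(\cdot,\varsigma)}_X\bigr)(x)$ since $(-mt,mt)\subseteq(x-2mt,x+2mt)$—and if you replaced the $t$-Hardy step by this observation and simultaneously worked with $u<r_E$, your argument would essentially become the paper's.

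In short: the pointwise splitting is fine, but the combination "$u=1$ plus Hardy in the $t$-variable with the $E$-norm on the outside" does not close without additional convexity hypotheses on $E$, and this is not merely a technical loose end—it is the part of the argument where the UMD condition on a suitable convexification $E^{[u]}$ with $u<r_E$ must do the work.
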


In the proof of this lemma we will use a description of $\F^{s}_{p,q}(\R,w_{\gamma};E;X)$ in terms of differences and a Hardy inequality. Such a strategy was used before in connection to the pointwise multiplier property of $1_{\R^d_+}$ by Strichartz \cite{Strichartz1967} for scalar-valued Bessel potential spaces $H^s_p(\R^d)$, by Triebel \cite[Section~2.8.6]{Triebel1983_TFS_I} for scalar-valued Triebel-Lizorkin spaces $F^{s}_{p,q}(\R^d)$ and by the author \cite[Theorem~1.4]{Lindemulder2016_JFA} for weighted vector-valued Bessel potential spaces $H^s_p(\R^d,w;X)$.

\begin{proof}
For convenience of notation we only treat the case $q < \infty$, the case $q=\infty$ being completely similar (with some minor modifications). Let $u \in (0,\infty)$ and $m \in \N$ be such that $\frac{1}{u}>\max\{\frac{1}{q},\frac{1}{p},\frac{1+\gamma}{p},\frac{1}{r_E}\}$ and $m > s$. Then the conditions of Theorem~\ref{thm:difference_norm} are fulfilled and we thus have \eqref{eq:thm:difference_norm} at our disposal. In order to use \eqref{eq:thm:difference_norm}, it will be convenient to introduce the notation
$$
[f]_{\F^{s}_{p,q}(\R,w_{\gamma};E;X)} := \normB{\left(\int_0^1 [t^{-s}d^{m}_{t,u}f]^q \frac{\mathrm{d}t}{t} \right)^{1/q}}_{L_p(\R,w_\gamma;E(X))}, \qquad f \in L_{1,\loc}(\R,E(X)).
$$

Fix  $f \in \F^{s}_{p,q}(\R,w_{\gamma};E;X) \subset L^{1}_{loc}(\R;E(X))$ and put $g:=1_{\R_{+}}f \in L_{1,\loc}(\R;E(X))$.

Let $(x,\varsigma) \in \R_{+} \times S$. Then
\begin{eqnarray*}
d^{m}_{t,u}g(x,\varsigma)
&=& \left(\frac{1}{t}\int_{-t}^{t}\norm{\Delta^{m}_{h}g(x,\varsigma)}_{X}^{u}dh\right)^{1/u} \\
&\lesssim& \left(\frac{1}{t}\int_{\max\{-x/m,-t\}}^{t}\norm{\Delta^{m}_{h}g(x,\varsigma)}_{X}^{u}dh\right)^{1/u}
      +  \left(\frac{1}{t}\int^{\max\{-x/m,-t\}}_{-t}\norm{\Delta^{m}_{h}g(x,\varsigma)}_{X}^{u}dh\right)^{1/u}  \\
&=& \left(\frac{1}{t}\int_{\max\{-x/m,-t\}}^{t}\norm{\Delta^{m}_{h}f(x,\varsigma)}_{X}^{u}dh\right)^{1/u}
      +  \left(\frac{1}{t}\int^{\max\{-x/m,-t\}}_{-t}\norm{\sum_{j=0}^{m-1}(-1)^{m-j}{m \choose j}g(x+jh,\varsigma)}_{X}^{u}dh\right)^{1/u}  \\
&\lesssim& \left(\frac{1}{t}\int_{\max\{-x/m,-t\}}^{t}\norm{\Delta^{m}_{h}f(x,\varsigma)}_{X}^{u}dh\right)^{1/u}
      +  \sum_{j=0}^{m-1}\left(\frac{1}{t}\int^{\max\{-x/m,-t\}}_{-t}\norm{f(x+jh,\varsigma)}_{X}^{u}dh\right)^{1/u}  \\
&\eqsim& d^{m}_{t,u}f(x,\varsigma) + 1_{\{t>x/m\}}t^{-1/u}(t-x/m)^{1/u}\,\sum_{j=0}^{m-1}\left( \fint_{-t}^{-x/m}\norm{f(x+jh,\varsigma)}_{X}^{u}dh \right)^{1/u} \\
&& d^{m}_{t,u}f(x,\varsigma) + 1_{\{t>x/m\}}t^{-1/u}(t-x/m)^{1/u}\,\left( \norm{f(x,\varsigma)}_{X} + \sum_{j=1}^{m-1}\left(\fint_{-jt}^{-jx/m}\norm{f(x+y,\varsigma)}_{X}^{u}dy \right)^{1/u} \right) \\
&\lesssim& d^{m}_{t,u}f(x,\varsigma) + 1_{\{t>x/m\}}t^{-1/u}(t-x/m)^{1/u}\,\left( \norm{f(x,\varsigma)}_{X} + \sum_{j=1}^{m-1}M_{u}\norm{f}_{X}\left(\frac{m-j}{m}x\right) \right),
\end{eqnarray*}
from which it follows that
\begin{eqnarray*}
 \left( \int_{0}^{1} [t^{-s}d^{m}_{t,u}g(x,\varsigma)]
 ^{q} \frac{\ud t}{t} \right)^{1/q}
&\lesssim& \left( \int_{0}^{1} t^{-sq}d^{m}_{t,u}f(x,\varsigma)^{q} \frac{\ud t}{t} \right)^{1/q} \: + \:
\left( \int_{x/m}^{\infty}t^{-sq}t^{-q/u}(t-x/m)^{q}\frac{\ud t}{t} \right)^{1/q} \\
&& \quad \cdot \: \left[ \norm{f(x,\varsigma)}_{X} + \sum_{j=1}^{m-1} M_{u}\norm{f}_{X}\left(\frac{m-j}{m}x\right) \right] \\
&\lesssim& \left( \int_{0}^{1} t^{-sq}d^{m}_{t,u}f(x,\varsigma)^{q} \frac{\ud t}{t} \right)^{1/q} \\
&& \quad |x|^{-s}\left[ \norm{f(x,\varsigma)}_{X} + \sum_{j=1}^{m-1} M_{u}\norm{f}_{X}\left(\frac{m-j}{m}x\right) \right],
\end{eqnarray*}
where we used
\[
\int_{a}^{\infty}t^{\alpha}(t-a)^{\beta}\ud t = a^{\alpha+\beta+1}\int_{1}^{\infty}\tau^{\alpha}(\tau-1)^{\beta}d\tau \lesssim_{\alpha,\beta} a^{\alpha+\beta+1}, \quad\quad \alpha \in \R, \beta \geq 0, \alpha+\beta<-1,
\]
with $\alpha=-sq-q/u-1$ and $\beta=q/u$.
Taking $L^{p}(\R_{+},w_{\gamma})[E]$-norms subsequently yields
\begin{eqnarray*}
\normB{  \left( \int_{0}^{1} [t^{-s}d^{m}_{t,u}g]^{q} \frac{\ud t}{t} \right)^{1/q} }_{L^{p}(\R_{+},w_{\gamma})[E]}
&\lesssim& [f]_{\F^{s}_{p,q}(\R,w_{\gamma};E;X)} \: + \: \norm{f}_{L^{p}(\R,w_{-sp+\gamma};E(X))} \\
&& \quad + \: \sum_{j=1}^{m-1}\normB{ (x,\varsigma) \mapsto M_{u}\norm{f}_{X}\big(\frac{m-j}{m}x,\varsigma\big) }_{L^{p}(\R,w_{-sp+\gamma})[E]} \\
&\lesssim& [f]_{\F^{s}_{p,q}(\R,w_{\gamma};E;X)} \: + \: \norm{f}_{L^{p}(\R,w_{-sp+\gamma};E(X))} \\
&& \quad + \: \sum_{j=1}^{m-1}\normb{ M_{u}\norm{f}_{X} }_{L^{p}(\R,w_{-sp+\gamma})[E]} \\
&\lesssim& [f]_{\F^{s}_{p,q}(\R,w_{\gamma};E;X)} \: + \: \norm{f}_{L^{p}(\R,w_{-sp+\gamma};E(X))},
\end{eqnarray*}
where we used that $M_{u}$ is bounded on $L^{p}(\R,w_{-sp+\gamma})[E]$ as $E^{[u]}$ is a UMD Banach function space and $w_{-sp+\gamma} \in A_{p/u}$ (which is in fact equivalent to $\frac{1+\gamma}{p}-\frac{1}{u} < s < \frac{1+\gamma}{p}$).
Since
\[
\F^{s}_{p,q}(\R,w_{\gamma};E;X) \hookrightarrow F^{s}_{p,\infty}(\R,w_{\gamma};E(X)) \hookrightarrow F^{0}_{p,1}(\R,w_{-sp+\gamma};E(X))
\hookrightarrow L^{p}(\R,w_{-sp+\gamma};E(X)),
\]
where the second inclusion follows from \cite[Theorem~1.2]{Meyries&Veraar_sharp_embd_power_weights},
it follows that
\begin{equation}\label{eq:lem:thm:pm_F,1-d;R+}
\normB{  \left( \int_{0}^{\infty} [t^{-s}d^{m}_{t,u}(1_{\R_{+}}f)]^{q} \frac{\ud t}{t} \right)^{1/q} }_{L^{p}(\R_{+},w_{\gamma})[E]}
\lesssim [f]_{\F^{s}_{p,q}(\R,w_{\gamma};E;X)} + \norm{f}_{\F^{s}_{p,q}(\R,w_{\gamma};E;X)}.
\end{equation}
As
\[
d^{m}_{t,u}(1_{\R_{+}}f)(x,\varsigma) = d^{m}_{t,u}(f-1_{\R_{-}}f)(x,\varsigma) \lesssim
d^{m}_{t,u}(f)(x,\varsigma) + d^{m}_{t,u}(1_{\R_{+}}[f(-\,\cdot\,)])(-x,\varsigma),
\]
the above inequality \eqref{eq:lem:thm:pm_F,1-d;R+} is also valid with $L^{p}(\R_{+},w_{\gamma})[E]$ replaced by $L^{p}(\R_{-},w_{\gamma})[E]$.
Combining \eqref{eq:lem:thm:pm_F,1-d;R+} with this corresponding version on $\R_-$, we find
\begin{equation*}
[1_{\R_{+}}f]_{\F^{s}_{p,q}(\R,w_{\gamma};E;X)} \lesssim 
[f]_{\F^{s}_{p,q}(\R,w_{\gamma};E;X)} + \norm{f}_{\F^{s}_{p,q}(\R,w_{\gamma};E;X)}.    
\end{equation*}
In combination with \eqref{eq:thm:difference_norm}, this yields the desired estimate
$$
\norm{1_{\R_{+}}f}_{\F^{s}_{p,q}(\R,w_{\gamma};E;X)} \lesssim \norm{f}_{\F^{s}_{p,q}(\R,w_{\gamma};E;X)}.
$$
\end{proof}

\begin{proof}[Proof of Theorem~\ref{thm:pm_F}]
Note that
$$
\max\Big\{\frac{1}{q},\frac{1}{p_1},\ldots,\frac{1}{p_i},\frac{1+\gamma}{p_i}\Big\} = \inf\left\{ \frac{1}{r} : 0 < r < p_{1} \wedge \ldots \wedge p_{i} \wedge q, w_\gamma \in A_{p_i/r}  \right\}.
$$
Therefore, by choosing $r_i$ bigger if necessary, we may without loss of generality assume that the condition $s>\sum_{j=1}^{\ell}a_j\mathpzc{d}_j(\frac{1}{r_j}-1)_+$ is fulfilled.
Then, by Theorem~\ref{thm:IR}, the intersection representation \eqref{eq:thm:IR} is valid. Writing $E=L_{\hat{\vec{p}}_{i}}(\R^{\hat{\mathpzc{d}}_{i}},\hat{\vec{w}}_{i})$, it thus remains to be shown that $\one_{\R_+}$ is a pointwise multiplier on $\F^{s/a_i}_{p_i,q}(\R,w_\gamma;E;X)$. From the observation that
$r_E = p_1 \wedge \ldots \wedge p_{i-1}$, where $r_E$ is as in \eqref{eq:lem:thm:pm_F,1-d;E}, it follows that
$$
\max\Big\{\frac{1}{q},\frac{1}{p_i},\frac{1+\gamma}{p_i},\frac{1}{r_E}\Big\} = 
\max\Big\{\frac{1}{q},\frac{1}{p_1},\ldots,\frac{1}{p_i},\frac{1+\gamma}{p_i}\Big\}.
$$
As a consequence, $\max\{\frac{1}{q},\frac{1}{p},\frac{1+\gamma}{p},\frac{1}{r_E}\} - 1 < s < \frac{1+\gamma}{p}$. We can thus invoke Lemma~\ref{lem:thm:pm_F,1-d} to obtain that $\one_{\R_+}$ is indeed a pointwise multiplier on $\F^{s/a_i}_{p_i,q}(\R,w_\gamma;E;X)$. 
\end{proof}

\bibliographystyle{plain}

\begin{thebibliography}{10}

\bibitem{Agresti&Veraar2020a}
A.~Agresti and M.C. Veraar.
\newblock Nonlinear parabolic stochastic evolution equations in critical spaces
  part i. stochastic maximal regularity and local existence, 2020.

\bibitem{Agresti&Veraar2020b}
A.~Agresti and M.C. Veraar.
\newblock Nonlinear parabolic stochastic evolution equations in critical spaces
  part ii. blow-up criteria and instantaneous regularization, 2020.

\bibitem{Amann09}
H.~Amann.
\newblock {\em Anisotropic function spaces and maximal regularity for parabolic
  problems. {P}art 1}, volume~6 of {\em Jind\v rich Ne\v cas Center for
  Mathematical Modeling Lecture Notes}.
\newblock Matfyzpress, Prague, 2009.
\newblock Function spaces.

\bibitem{Amann2019_LQPP_II}
H.~Amann.
\newblock {\em Linear and quasilinear parabolic problems. {V}ol. {II}}, volume
  106 of {\em Monographs in Mathematics}.
\newblock Birkh\"{a}user/Springer, Cham, 2019.
\newblock Function spaces.

\bibitem{Angenent1990}
S.B. Angenent.
\newblock Nonlinear analytic semiflows.
\newblock {\em Proc. Roy. Soc. Edinburgh Sect. A}, 115(1-2):91--107, 1990.

\bibitem{Bony1981}
J.-M Bony.
\newblock Calcul symbolique et propagation des singularit\'{e}s pour les
  \'{e}quations aux d\'{e}riv\'{e}es partielles non lin\'{e}aires.
\newblock {\em Ann. Sci. \'{E}cole Norm. Sup. (4)}, 14(2):209--246, 1981.

\bibitem{Bui_Weighted_Beov&Triebel-Lizorkin_spaces:interpolation...}
H.Q. {Bui}.
\newblock Weighted {B}esov and {T}riebel spaces: interpolation by the real
  method.
\newblock {\em Hiroshima Math. J.}, 12(3):581--605, 1982.

\bibitem{Clement&Simonett2001}
P.~Cl{\'e}ment and G.~Simonett.
\newblock Maximal regularity in continuous interpolation spaces and quasilinear
  parabolic equations.
\newblock {\em J. Evol. Equ.}, 1(1):39--67, 2001.

\bibitem{Denk&Hieber&Pruess2007}
R.~Denk, M.~Hieber, and J.~Pr\"uss.
\newblock Optimal {$L^p$}-{$L^q$}-estimates for parabolic boundary value
  problems with inhomogeneous data.
\newblock {\em Math. Z.}, 257(1):193--224, 2007.

\bibitem{Denk&Kaip2013}
R.~Denk and M.~Kaip.
\newblock {\em General parabolic mixed order systems in {${L_p}$} and
  applications}, volume 239 of {\em Operator Theory: Advances and
  Applications}.
\newblock Birkh\"auser/Springer, Cham, 2013.

\bibitem{Fackler&Hytonen&Lindemulder2018}
S.~{Fackler}, T.~P. {Hyt{\"o}nen}, and N.~{Lindemulder}.
\newblock {Weighted Estimates for Operator-Valued Fourier Multipliers}.
\newblock {\em ArXiv e-prints}, September 2018.

\bibitem{Farkas&Johnsen&Sickel2000}
W.~Farkas, J.~Johnsen, and W.~Sickel.
\newblock Traces of anisotropic {B}esov-{L}izorkin-{T}riebel spaces---a
  complete treatment of the borderline cases.
\newblock {\em Math. Bohem.}, 125(1):1--37, 2000.

\bibitem{Franke1986}
J.~Franke.
\newblock On the spaces {${\bf F}_{pq}^s$} of {T}riebel-{L}izorkin type:
  pointwise multipliers and spaces on domains.
\newblock {\em Math. Nachr.}, 125:29--68, 1986.

\bibitem{Grafakos_modern}
L.~Grafakos.
\newblock {\em Modern {F}ourier analysis}, volume 250 of {\em Graduate Texts in
  Mathematics}.
\newblock Springer, New York, second edition, 2009.

\bibitem{Grisvard1967}
P.~Grisvard.
\newblock Caract\'erisation de quelques espaces d'interpolation.
\newblock {\em Arch. Rational Mech. Anal.}, 25:40--63, 1967.

\bibitem{Grisvard1969}
P.~Grisvard.
\newblock \'{E}quations diff\'{e}rentielles abstraites.
\newblock {\em Ann. Sci. \'{E}cole Norm. Sup. (4)}, 2:311--395, 1969.

\bibitem{Hummel&Lindemulder2019}
F.~{Hummel} and N.~{Lindemulder}.
\newblock {Elliptic and Parabolic Boundary Value Problems in Weighted Function
  Spaces}.
\newblock {\em arXiv e-prints}, page arXiv:1911.04884, Nov 2019.

\bibitem{Hytonen_anisotropic}
T.P. Hyt{\"o}nen.
\newblock Anisotropic {F}ourier multipliers and singular integrals for
  vector-valued functions.
\newblock {\em Ann. Mat. Pura Appl. (4)}, 186(3):455--468, 2007.

\bibitem{Hytonen&Neerven&Veraar&Weis2016_Analyis_in_Banach_Spaces_I}
T.P. Hyt\"onen, J.M.A.M.~van Neerven, M.C. Veraar, and L.~Weis.
\newblock {\em Analysis in {B}anach spaces. {V}ol. {I}. {M}artingales and
  {L}ittlewood-{P}aley theory}, volume~63 of {\em Ergebnisse der Mathematik und
  ihrer Grenzgebiete. 3. Folge.}
\newblock Springer, 2016.

\bibitem{Johnsen&Munch_Hansen&Sickel2014}
J.~Johnsen, S.~Munch~Hansen, and W.~Sickel.
\newblock Anisotropic, mixed-norm {L}izorkin-{T}riebel spaces and diffeomorphic
  maps.
\newblock {\em J. Funct. Spaces}, pages Art. ID 964794, 15, 2014.

\bibitem{Johnsen&Mucn_Hansen&Sickel2015}
J.~Johnsen, S.~Munch~Hansen, and W.~Sickel.
\newblock Anisotropic {L}izorkin-{T}riebel spaces with mixed norms---traces on
  smooth boundaries.
\newblock {\em Math. Nachr.}, 288(11-12):1327--1359, 2015.

\bibitem{Johnsen&Sickel2007}
J.~Johnsen and W.~Sickel.
\newblock A direct proof of {S}obolev embeddings for quasi-homogeneous
  {L}izorkin-{T}riebel spaces with mixed norms.
\newblock {\em J. Funct. Spaces Appl.}, 5(2):183--198, 2007.

\bibitem{Johnsen&Sickel2008}
J.~Johnsen and W.~Sickel.
\newblock On the trace problem for {L}izorkin-{T}riebel spaces with mixed
  norms.
\newblock {\em Math. Nachr.}, 281(5):669--696, 2008.

\bibitem{Koehne&Saal2020}
M.~Köhne and J.~Saal.
\newblock Multiplication in vector-valued anisotropic function spaces and
  applications to non-linear partial differential equations, 2020.

\bibitem{Lindemulder_master-thesis}
N.~Lindemulder.
\newblock Parabolic {I}nitial-{B}oundary {V}alue {P}roblems with
  {I}nhomoegeneous {D}ata: A weighted maximal regularity approach.
\newblock Master's thesis, Utrecht University, 2014.

\bibitem{Lindemulder2016_JFA}
N.~Lindemulder.
\newblock Difference norms for vector-valued {B}essel potential spaces with an
  application to pointwise multipliers.
\newblock {\em J. Funct. Anal.}, 272(4):1435--1476, 2017.

\bibitem{Lindemulder2018_DSOP}
N.~{Lindemulder}.
\newblock {Second Order Operators Subject to Dirichlet Boundary Conditions in
  Weighted Triebel-Lizorkin Spaces: Parabolic Problems}.
\newblock {\em ArXiv e-prints (arXiv:1812.05462)}, December 2018.

\bibitem{Lindemulder2020_JEE}
N.~Lindemulder.
\newblock Maximal regularity with weights for parabolic problems with
  inhomogeneous boundary conditions.
\newblock {\em J. Evol. Equ.}, 20(1):59--108, 2020.

\bibitem{Lindemulder2019_IR}
N.~Lindemulder.
\newblock An intersection representation for a class of anisotropic
  vector-valued function spaces.
\newblock {\em Journal of Approximation Theory}, 264:105519, 2021.

\bibitem{Lindemulder2020_DSOE}
N.~Lindemulder.
\newblock {Second Order Operators Subject to Dirichlet Boundary Conditions in
  Weighted Besov and Triebel-Lizorkin Spaces: Elliptic Problems}.
\newblock In final stage of preparation., 2021.

\bibitem{LL20}
N.~Lindemulder and E.~Lorist.
\newblock An interpolation approach to maximal regularity of parabolic boundary
  value problems with inhomogeneous boundary conditions.
\newblock In preparation, 2021.

\bibitem{Lindemulder&Meyries&Veraar2017}
N.~Lindemulder, M.~Meyries, and M.C. Veraar.
\newblock Complex interpolation with {D}irichlet boundary conditions on the
  half line.
\newblock {\em Mathematische Nachrichten}, 291(16):2435--2456, 2017.

\bibitem{Lindemulder&Veraar2020_JDE}
N.~Lindemulder and M.C. Veraar.
\newblock The heat equation with rough boundary conditions and holomorphic
  functional calculus.
\newblock {\em Journal of Differential Equations}, 269(7):5832 -- 5899, 2020.

\bibitem{LVY2018}
N.~Lindemulder, M.C. Veraar, and I.~Yaroslavtsev.
\newblock In {\em Positivity and Noncommutative Analysis: Festschrift in Honour
  of Ben de Pagter on the Occasion of his 65th Birthday}, pages 349--363.
  Springer International Publishing, Cham, 2019.

\bibitem{Lorist2019_pointwise}
E.~Lorist.
\newblock On pointwise $\ell^r$-sparse domination in a space of homogeneous
  type.
\newblock {\em J. Geom. Anal.}, 2020 (online first).

\bibitem{Marschall1987}
J.~Marschall.
\newblock Some remarks on {T}riebel spaces.
\newblock {\em Studia Math.}, 87(1):79--92, 1987.

\bibitem{Meyries&Schnaubelt2012_fractional_Sobolev}
M.~Meyries and R.~Schnaubelt.
\newblock Interpolation, embeddings and traces of anisotropic fractional
  {S}obolev spaces with temporal weights.
\newblock {\em J. Funct. Anal.}, 262(3):1200--1229, 2012.

\bibitem{Meyries&Schnaubelt2012_maximal_regularity}
M.~Meyries and R.~Schnaubelt.
\newblock Maximal regularity with temporal weights for parabolic problems with
  inhomogeneous boundary conditions.
\newblock {\em Math. Nachr.}, 285(8-9):1032--1051, 2012.

\bibitem{Meyries&Veraar2012_sharp_embedding}
M.~Meyries and M.C. Veraar.
\newblock Sharp embedding results for spaces of smooth functions with power
  weights.
\newblock {\em Studia Math.}, 208(3):257--293, 2012.

\bibitem{Meyries&Veraar_sharp_embd_power_weights}
M.~Meyries and M.C Veraar.
\newblock Sharp embedding results for spaces of smooth functions with power
  weights.
\newblock {\em Studia Math.}, 208(3):257--293, 2012.

\bibitem{Meyries&Veraar2014_traces}
M.~Meyries and M.C. Veraar.
\newblock Traces and embeddings of anisotropic function spaces.
\newblock {\em Math. Ann.}, 360(3-4):571--606, 2014.

\bibitem{Meyries&Veraar2015_pointwise_multiplication}
M.~Meyries and M.C. Veraar.
\newblock Pointwise multiplication on vector-valued function spaces with power
  weights.
\newblock {\em J. Fourier Anal. Appl.}, 21(1):95--136, 2015.

\bibitem{Peetre1976}
J.~Peetre.
\newblock {\em New thoughts on {B}esov spaces}.
\newblock Mathematics Department, Duke University, Durham, N.C., 1976.
\newblock Duke University Mathematics Series, No. 1.

\bibitem{Pruess2017}
J.~Pr\"{u}ss.
\newblock On the quasi-geostrophic equations on compact closed surfaces in
  {$\Bbb{R}^3$}.
\newblock {\em J. Funct. Anal.}, 272(6):2641--2658, 2017.

\bibitem{Pruss&Simonett2004}
J.~Pr{\"u}ss and G.~Simonett.
\newblock Maximal regularity for evolution equations in weighted
  {$L_p$}-spaces.
\newblock {\em Arch. Math. (Basel)}, 82(5):415--431, 2004.

\bibitem{Pruess&Simonett2016_book}
J.~Pr\"uss and G.~Simonett.
\newblock {\em Moving interfaces and quasilinear parabolic evolution
  equations}, volume 105 of {\em Monographs in Mathematics}.
\newblock Birkh\"auser/Springer, [Cham], 2016.

\bibitem{Pruess&Simonett&Wilke2018}
J.~Pr\"{u}ss, G.~Simonett, and M.~Wilke.
\newblock Critical spaces for quasilinear parabolic evolution equations and
  applications.
\newblock {\em J. Differential Equations}, 264(3):2028--2074, 2018.

\bibitem{PrSiZa}
J.~Pr\"uss, G.~Simonett, and R.~Zacher.
\newblock Qualitative behavior of solutions for thermodynamically consistent
  {S}tefan problems with surface tension.
\newblock {\em Arch. Ration. Mech. Anal.}, 207(2):611--667, 2013.

\bibitem{Pruess&Wilke2017}
J.~Pr\"{u}ss and M.~Wilke.
\newblock Addendum to the paper ``{O}n quasilinear parabolic evolution
  equations in weighted {$L_p$}-spaces {II}''.
\newblock {\em J. Evol. Equ.}, 17(4):1381--1388, 2017.

\bibitem{Pruess&Wilke2018}
J.~Pr\"{u}ss and M.~Wilke.
\newblock On critical spaces for the {N}avier-{S}tokes equations.
\newblock {\em J. Math. Fluid Mech.}, 20(2):733--755, 2018.

\bibitem{Runst&Sickel_monograph}
T.~Runst and W.~Sickel.
\newblock {\em Sobolev spaces of fractional order, {N}emytskij operators, and
  nonlinear partial differential equations}, volume~3 of {\em de Gruyter Series
  in Nonlinear Analysis and Applications}.
\newblock Walter de Gruyter \& Co., Berlin, 1996.

\bibitem{Rychkov2001}
V.S. Rychkov.
\newblock Littlewood-{P}aley theory and function spaces with {$A^{\rm loc}_p$}
  weights.
\newblock {\em Math. Nachr.}, 224:145--180, 2001.

\bibitem{Scharf&Schmeisser&Sickel_Traces_vector-valued_Sobolev}
B.~Scharf, H-J. Schmei{\ss}er, and W.~Sickel.
\newblock Traces of vector-valued {S}obolev spaces.
\newblock {\em Math. Nachr.}, 285(8-9):1082--1106, 2012.

\bibitem{Schmeisser&Sickel2005}
H.-J. Schmei\ss~er and W.~Sickel.
\newblock Vector-valued {S}obolev spaces and {G}agliardo-{N}irenberg
  inequalities.
\newblock In {\em Nonlinear elliptic and parabolic problems}, volume~64 of {\em
  Progr. Nonlinear Differential Equations Appl.}, pages 463--472.
  Birkh\"{a}user, Basel, 2005.

\bibitem{S&S_jena-notes}
H.-J. Schmei{\ss}er and W.~Sickel.
\newblock Traces, {G}agliardo-{N}irenberg {I}nequalities and {S}obolev {T}ype
  {E}mbeddings for {V}ector-valued {F}unction {S}paces.
\newblock Unpublished notes, Jena, 2004.

\bibitem{Seeley1972}
R.~Seeley.
\newblock Interpolation in {$L^{p}$} with boundary conditions.
\newblock {\em Studia Math.}, 44:47--60, 1972.

\bibitem{Shamir1962}
E.~Shamir.
\newblock Une propri\'et\'e des espaces {$H^{s,\,p}$}.
\newblock {\em C. R. Acad. Sci. Paris}, 255:448--449, 1962.

\bibitem{Sickel1987}
W.~Sickel.
\newblock On pointwise multipliers in {B}esov-{T}riebel-{L}izorkin spaces.
\newblock In {\em Seminar analysis of the {K}arl-{W}eierstra\ss -{I}nstitute of
  {M}athematics 1985/86 ({B}erlin, 1985/86)}, volume~96 of {\em Teubner-Texte
  Math.}, pages 45--103. Teubner, Leipzig, 1987.

\bibitem{Sickel1999}
W.~Sickel.
\newblock On pointwise multipliers for {$F^s_{p,q}({\bf R}^n)$} in case
  {$\sigma_{p,q}<s<n/p$}.
\newblock {\em Ann. Mat. Pura Appl. (4)}, 176:209--250, 1999.

\bibitem{Sickel1999b}
W.~Sickel.
\newblock Pointwise multipliers of {L}izorkin-{T}riebel spaces.
\newblock In {\em The {M}az\cprime ya anniversary collection, {V}ol. 2
  ({R}ostock, 1998)}, volume 110 of {\em Oper. Theory Adv. Appl.}, pages
  295--321. Birkh\"{a}user, Basel, 1999.

\bibitem{Strichartz1967}
R.S. Strichartz.
\newblock Multipliers on fractional {S}obolev spaces.
\newblock {\em J. Math. Mech.}, 16:1031--1060, 1967.

\bibitem{Triebel1983_TFS_I}
H.~Triebel.
\newblock {\em Theory of function spaces}, volume~78 of {\em Monographs in
  Mathematics}.
\newblock Birkh\"auser Verlag, Basel, 1983.

\bibitem{Weidemaier2002}
P.~Weidemaier.
\newblock Maximal regularity for parabolic equations with inhomogeneous
  boundary conditions in {S}obolev spaces with mixed {$L_p$}-norm.
\newblock {\em Electron. Res. Announc. Amer. Math. Soc.}, 8:47--51, 2002.

\end{thebibliography}

\def\cprime{$'$} \def\cprime{$'$} \def\cprime{$'$} \def\cprime{$'$}

\end{document}